%% file: Norm_Minimization_Problem-6+.tex
\documentclass[]{interact}

\usepackage{epstopdf}
\usepackage[caption=false]{subfig}
\usepackage{graphicx}
\usepackage{float}
\usepackage[numbers,sort&compress]{natbib}
\bibpunct[, ]{[}{]}{,}{n}{,}{,}
\makeatletter
\def\NAT@def@citea{\def\@citea{\NAT@separator}}
\makeatother
\usepackage{hyperref}
\hypersetup{
	colorlinks=true,
	linkcolor=blue, 
	citecolor=red, 
	urlcolor=blue  } 
\usepackage{mathptmx}   
\usepackage{enumitem}
\theoremstyle{plain}
\newtheorem{theorem}{Theorem}[section]
\newtheorem{lemma}[theorem]{Lemma}
\newtheorem{corollary}[theorem]{Corollary}
\newtheorem{proposition}[theorem]{Proposition}

\theoremstyle{definition}

\newtheorem{example}[theorem]{Example}

\theoremstyle{remark}
\newtheorem{remark}[theorem]{Remark}

\renewcommand {\theenumi} {\rm\roman{enumi}}

\usepackage{color}
\usepackage{comment}
\usepackage[colorinlistoftodos]{todonotes}
\input{macro}

\usetikzlibrary{patterns,arrows.meta}
\begin{document}

\title{Norm Minimisation Problems Involving Distances to Convex Sets
}

\author{
\name{Doan Huu Hieu\textsuperscript{a,b},
Nguyen Duy Cuong\textsuperscript{c}
}
\thanks{CONTACT Nguyen Duy Cuong. Email: ndcuong@ctu.edu.vn}
\affil{\textsuperscript{a} 
Faculty of Mathematics and Computer Science, University of Science, Ho Chi Minh City, Vietnam\\
\textsuperscript{b}~Vietnam National University, Ho Chi Minh City, Vietnam\\
\textsuperscript{c}~Faculty of Mathematics, College of Natural Sciences, Can Tho University, Can Tho City, Vietnam
}
}
\maketitle

\begin{abstract}
The paper studies product-space norm minimisation problems involving distances to convex sets. 
Using standard tools from convex and functional analysis, we establish complete dual necessary and sufficient optimality conditions and show that the entire solution set can be constructed from the dual vectors arising from the optimality conditions at a given solution.
As a consequence, we study optimality conditions and solution set descriptions for generalised versions of the Fermat-Torricelli problem, the Chebyshev centre problem, and the $p$-Fermat-Torricelli problem. 
Comparisons with existing results are provided whenever applicable. 
Examples in  finite and infinite dimensional  spaces equipped with different norms are presented to illustrate the results.	
\end{abstract}

\begin{keywords}
Chebyshev centre problem; Fermat-Torricelli problem;   optimality conditions; convex analysis; functional analysis
\end{keywords}

\begin{amscode}
49J52; 49J53; 49K40; 90C30; 90C46
\end{amscode}

\setcounter{tocdepth}{2}

\section{Introduction}
Given a finite family of nonempty closed convex sets in a normed space, the Generalised Chebyshev Centre Problem (GCCP) \cite{NamHoa13} seeks a ball of minimum radius that intersects every set in the family, whereas the  Generalised Fermat-Torricelli Problem (GFTP) \cite{MorNam11} aims to find a point minimising the sum of the distances to these sets. When all the sets are singletons, these problems reduce to the classical Chebyshev centre problem \cite{AmiMac84} (also known as the Sylvester problem \cite{NamHoaAn14}) and the Fermat-Torricelli problem \cite{MarSwaWei02}, respectively. These models have been extensively studied and have found numerous applications in location theory, data analysis, control theory, approximation theory, and related areas \cite{BolMarSol99,MorNam11,KazLiuMor25,BotTur94}.


The GCCP and GFTP have attracted considerable attention from both computational and theoretical viewpoints. 
In finite-dimensional spaces, numerical algorithms have been proposed for solving these problems for both smooth and nonsmooth norms \cite{NamNguSal12,MorNam11,NamAnRecSun14,BotTur94}. 
On the other hand, studies in infinite-dimensional spaces have mainly focused on dual optimality conditions and qualitative properties of solution sets such as existence, uniqueness, and compactness \cite{NamNguSal12,MorNam11,MorNamVil13,NamHoa13,AliTsa19}.

The GCCP and GFTP have typically been studied separately in the literature. 
This is mainly due to the seemingly different structures of their objective functions: the GCCP minimises the maximum of the distances to the given sets, whereas the GFTP minimises their sum. 
Nevertheless, this distinction is not essential since both problems can be viewed as special cases of a more general norm minimisation problem involving an arbitrary norm on $\R^n$.
To the best of our knowledge, there is no general framework that systematically addresses dual optimality conditions and  solution set representations in which the GCCP and GFTP arise as particular cases. 
Motivated by this observation, the present paper aims to develop such a unified framework for norm minimisation problems involving distances to convex sets.

In many optimisation problems, especially in location theory, it is important to characterise the entire solution set. Such a characterisation not only allows one to select an alternative optimal solution when practical constraints such as regulations or physical restrictions prevent the use of a particular optimal solution but also provides a foundation for sensitivity and stability analysis of the solution set with respect to perturbations of the problem data.
Characterisations of solution sets for convex optimisation problems have been investigated in \cite{Man88,Bur91}.
 In the recent work \cite{Cuo26b},  solution set constructions were established for norm minimisation problems in the special case when all the sets are singletons. 
 In this paper, we make an attempt to extend these results to the more general setting of convex sets.

Employing standard tools from functional and convex analysis, we establish fundamental properties of the solution set such as existence and compactness and derive complete dual necessary and sufficient optimality conditions. 
We then show that the entire solution set admits an explicit representation in terms of the dual vectors associated with the optimality conditions at a given solution.
As a consequence, we specify the established results to the three important norms on $\R^n$: the maximum, sum and $p$-norms. 
This yields  optimality conditions and solution set formulations for the GCCP,  GFTP and  $p$-GFTP.
While some of the optimality conditions for these special cases have appeared in the literature, solution set construction results do not seem to be available.
Several examples in  finite and infinite dimensional spaces are provided to illustrate the results.
In particular, it is shown that the corresponding solution set depends strongly on the choice of norm on the underlying space.

Two norms are used in our model: the norm on the underlying space $X$ and the norm on $\R^n$. 
The norm on $X$ determines the analytical and computational complexity of the problem, whereas the norm on $\R^n$ specifies how the individual distance functions are combined to form the objective function. 
To treat different combinations in a unified manner, we employ the product-norm construction introduced in \cite{BonDun73,SaiKatTak00} and recently refined in \cite{Cuo26a}. 
This construction generates a broad class of norms on $\R^n$ from continuous convex functions satisfying suitable conditions. 
Moreover, the corresponding dual norms admit explicit representations in terms of the generating functions associated with the primal norms providing a convenient framework for our analysis.
In particular, the maximum, sum and $p$-norms are obtained by appropriate choices of the generating function. 

The remainder of the paper is organised as follows. Section~\ref{S2} collects  basic definitions and  results used throughout the paper. 
Section~\ref{S3} investigates a general norm minimisation problem involving distances to convex sets. 
Fundamental properties of the solution set, complete dual necessary and sufficient optimality conditions, and a method for constructing the entire solution set are established. Sections~\ref{S4}-\ref{S6} are devoted to the GFTP,  GCCP and $p$-GFTP, respectively.
Each section includes optimality conditions, solution set descriptions and  illustrative examples.
The final Section~\ref{S7} summarises the main contributions of the paper and outlines potential directions for future research.

\section{Preliminaries}\label{S2}
Our basic notation is standard, see, e.g., \cite{Mor06.1,RocWet98} .
Throughout the paper, if not explicitly stated otherwise, $X$ is a vector space equipped with a norm $\|\cdot\|$.
The topological dual of a vector space $X$ is denoted by $X^*$, while $\langle\cdot,\cdot\rangle$ denotes the bilinear form defining the pairing between the two spaces.
The dual norm of $\|\cdot\|$ is denoted by $\|\cdot\|^*$, and
they are usually labeled by subscripts indicating the underlying space.
The notation $\B_r(\bar x)$ denotes the closed ball in $X$ with centre $\bar x$ and radius $r>0$, while $\B^*$ and $\mathbb{S}^*$ denote the closed unit ball and the unit sphere of the dual space $X^*$, respectively.
The notation $x_\nu^*\xrightarrow{w^*}x^*\in X^*$ denotes the weak$^*$ convergence of the net $\{x_\nu^*\}\subset X^*$.
If $\Omega\subset X^*$, then its weak$^*$ closure is denoted by $\cl^*\Omega$.
When $X$ a Hilbert space, the associated norm is given by  
$\|\cdot\|:=\sqrt{\ang{\cdot,\cdot}}$, and we
identify $X$ and $X^*$ thanks to the Riesz representation theorem \cite[Theorem 5.5]{Bre11}.
The convex hull of a set $\Omega\subset X$ is denoted by $\co\Omega$. 
The distance and projection from a point $\bx\in X$ to $\Omega$ are defined by $d(\bx,\Omega):=\inf_{x\in\Omega}\|\bx-x\|$ and $P_\Omega(\bx):=\{x\in \Omega\mid d(\bx,\Omega)=\|\bx-x\|\}$, respectively.
If $\Omega=\emptyset$, we set $d(\bx,\Omega):=\infty$ and $P_\Omega(\bx):=\emptyset$.
If $\Omega$ is convex, then the function 
$d(\cdot,\Omega)$ is convex.
Symbols $\mathbb{R}$, $\mathbb{R}_+$ and $\mathbb{R}_+^n$ denote the real line equipped with the standard norm, the set of all nonnegative real numbers, and the set of all vectors in $\mathbb{R}^n$ with nonnegative components, respectively.
We write $\infty$ instead of $+\infty$ and employ the convention that $\frac{1}{\infty}=0$ and $\frac{1}{0}=\infty$.
Given a finite family of convex functions
$h_1,\ldots,h_n:X\to\mathbb{R}$, we define the mapping
$h:X\to\mathbb{R}^n$ by
\begin{gather}\label{h}
h(x):=(h_1(x),\ldots,h_n(x))\;\;\text{for all}\;\;x\in X.
\end{gather}


\paragraph*{Subdifferentials and normal cones.}
For a convex function $f:X\to\R\cup\{\infty\}$ on a normed space $X$,
its subdifferential at $\bx\in\dom f:=\{x \in X\mid f(x) <\infty\}$ is given by  
\begin{gather}\label{cs}
	\sd f(\bx)=\left\{x^* \in X^*\mid \ang{x^*,x-\bx}\le f(x)-f(\bx)\;\;\text{for all}\;\;x\in X\right\}.
\end{gather}
By convention, we set $\partial{f}(\bx):=\emptyset$ if $\bx\notin\dom f$.
If $f$ is continuous at $\bx$, then $\partial f(\bx)$ is nonempty and weak$^*$ compact \cite[Theorem 3.25]{Pen13}.
For a norm $\|\cdot\|$ on a vector space $X$,
it is well known \cite[Example~3.36]{MorNam22} that
\begin{equation}\label{sn}
\partial\|\cdot\|(x) = 
\begin{cases}
\left\{x^*\in X^*\mid \|x^*\|^* =1\;\;\text{and}\;\;\langle x^*,x\rangle=\|x\|\right\} & \text{if}\;\;x \ne 0,\\
\{x^*\in X^*\mid \|x^*\|^* \le 1\} & \text{otherwise}.
\end{cases}
\end{equation}	
	


Given a nonempty convex subset $\Omega$ of a normed space $X$, the normal cone to $\Omega$ at $\bx\in\Omega$ is defined by
\begin{gather}\label{CN}
N_\Omega(\bx):=\{x^*\in X^*\mid\langle x^*,x-\bx\rangle\le 0\;\;\text{for all}\;\; x\in\Omega\}.
\end{gather}
If $\bx\notin\Omega$, we set $N_\Omega(\bx):=\emptyset$.
By \cite[Proposition 3.77(a) \& Theorem 3.82]{MorNam22},
\begin{equation}\label{sd}
\partial d(\cdot,\Omega)(\bx)=
\begin{cases}
N_{\Omega}(\bx)\cap \B^* & \text{if}\;\;  \bx\in\Omega,\\
N_{\Omega(\bx)}(\bx)\cap\mathbb{S}^*& \text{if}\;\;\bx\notin\Omega
\end{cases}
\end{equation}
with $\Omega(\bx):=\{x\in X\mid d(x,\Omega)\leq d(\bx,\Omega)\}$.
Note that $\Omega(\bx)$ is nonempty, closed, and convex.

\begin{lemma}{\rm\cite[Proposition 3.70 and Corollary 3.79]{MorNam22}}\label{L2.1}
Let $X$ be a Hilbert space, $\Omega\subset X$ be a nonempty closed  convex set, and $\bx\in X$. 
The following assertions hold.
\begin{enumerate}
\item\label{L2.1-1}
$P_\Omega(\bx)$ is a singleton.
\item\label{L2.1-2}
 If $\bx\notin\Omega$, then $\partial d(\cdot,\Omega)(\bx)$ is also a singleton, and
\begin{gather*}
	\partial d(\cdot,\Omega)(\bx)=
	\left\{\frac{\bx-P_\Omega(\bx)}{d(\bx,\Omega)}\right\}.
\end{gather*} 
\end{enumerate}
\end{lemma}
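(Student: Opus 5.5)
For assertion (i) I will use the parallelogram law. Set $\delta:=d(\bx,\Omega)$ and pick a minimising sequence $\{x_k\}\subset\Omega$ with $\|\bx-x_k\|\to\delta$. Applying the parallelogram identity to $\bx-x_k$ and $\bx-x_m$ gives
\begin{gather*}
\|x_k-x_m\|^2=2\|\bx-x_k\|^2+2\|\bx-x_m\|^2-4\Big\|\bx-\tfrac{x_k+x_m}{2}\Big\|^2 .
\end{gather*}
By convexity $\frac{x_k+x_m}{2}\in\Omega$, so the last norm is at least $\delta$. Hence $\|x_k-x_m\|^2\le 2\|\bx-x_k\|^2+2\|\bx-x_m\|^2-4\delta^2\to0$, and $\{x_k\}$ is Cauchy. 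Completeness of $X$ and closedness of $\Omega$ then give a limit $\bar p\in\Omega$ with $\|\bx-\bar p\|=\delta$, so $P_\Omega(\bx)\neq\emptyset$. For uniqueness, take two projections $p_1,p_2$ and apply the same identity to them; this yields $\|p_1-p_2\|^2\le 0$. Completeness is the only point where the Hilbert-space hypothesis is essential (beyond the parallelogram law), and I regard the existence step as the main part of (i).

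For assertion (ii) I will use formula \eqref{sd}, together with the variational characterisation of the projection: $\bar p=P_\Omega(\bx)$ if and only if $\ang{\bx-\bar p,x-\bar p}\le0$ for all $x\in\Omega$. This follows by expanding $\|\bx-(\bar p+t(x-\bar p))\|^2\ge\|\bx-\bar p\|^2$ for $t\in(0,1]$ and letting $t\downarrow0$.

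Let $\bx\notin\Omega$, so $\delta>0$, and set $v:=(\bx-\bar p)/\delta$, a unit vector. First I show $v\in\sd d(\cdot,\Omega)(\bx)$ directly from \eqref{cs}. For any $x$, put $p=P_\Omega(x)$. Then
\begin{gather*}
d(x,\Omega)-\delta=\|x-p\|-\ang{v,\bx-\bar p}\ge\ang{v,x-p}-\ang{v,\bx-\bar p}=\ang{v,x-\bx}+\ang{v,\bar p-p}.
\end{gather*}
The last term is nonnegative by the projection inequality applied at $\bx$ with the point $p\in\Omega$. This gives the subgradient inequality.

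Next I show uniqueness. Let $x^*\in\sd d(\cdot,\Omega)(\bx)$. By \eqref{sd}, $\|x^*\|=1$. Testing \eqref{cs} at $x=\bar p$ gives $\ang{x^*,\bar p-\bx}\le 0-\delta$, that is, $\ang{x^*,\bx-\bar p}\ge\delta=\|\bx-\bar p\|$. By Cauchy--Schwarz this holds with equality, and equality forces $x^*=v$. Hence the subdifferential is exactly $\{v\}$.

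The delicate step of (ii) is the inequality $\ang{v,\bar p-p}\ge0$, which needs the projection characterisation to be applied at the correct base point. Alternatively, membership of $v$ could be read off \eqref{sd} by checking $v\in N_{\Omega(\bx)}(\bx)$. I would use the direct argument above instead, because it avoids having to describe $\Omega(\bx)$.
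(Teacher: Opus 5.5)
Your proof is correct and complete. The paper gives no argument for this lemma; it only cites Mordukhovich--Nam. So the comparison below is with the route suggested by the paper's own toolkit, not with a written proof.

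\textbf{Assertion (ii).} Set $\bar p:=P_\Omega(\bx)$ and $v=(\bx-\bar p)/\delta$. The shortest route is Lemma~\ref{L3.6}\eqref{L3.6-1}, which gives $\partial d(\cdot,\Omega)(\bx)=\partial\|\cdot-\bar p\|(\bx)\cap N_\Omega(\bar p)$. By \eqref{sn} and the equality case of Cauchy--Schwarz, the first set is exactly $\{v\}$. The condition $v\in N_\Omega(\bar p)$ is precisely the projection inequality, so the intersection is $\{v\}$.

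Your direct verification unpacks the same two ingredients. Your membership step (projection inequality plus $\|x-p\|\ge\langle v,x-p\rangle$) re-proves, in this special case, the inclusion $\partial\|\cdot-\bar p\|(\bx)\cap N_\Omega(\bar p)\subset\partial d(\cdot,\Omega)(\bx)$. Your uniqueness step is the Hilbert-space fact that $\partial\|\cdot\|(z)=\{z/\|z\|\}$ for $z\ne 0$.

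What your route buys is self-containment. It needs neither Lemma~\ref{L3.6} nor, in fact, \eqref{sd}. The only thing you take from \eqref{sd} is $\|x^*\|=1$, but $\|x^*\|\le 1$ already suffices in the chain $\delta\le\langle x^*,\bx-\bar p\rangle\le\|x^*\|\,\delta\le\delta$. That bound follows directly from $d(\cdot,\Omega)$ being $1$-Lipschitz and the definition \eqref{cs}.

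What the citation route buys is brevity. It also exhibits the structure ``norm subdifferential $\cap$ normal cone at a projection'', which the paper reuses in Proposition~\ref{P3.7}.

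\textbf{Assertion (i).} Your parallelogram-law Cauchy-sequence argument is the classical elementary proof. It avoids the reflexivity/weak-compactness existence result the paper quotes in its remark after Proposition~\ref{P3.7}. The trade-off is that it is tied to inner-product norms.
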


\begin{lemma}\label{L3.6}
Let $X$ be a normed space,  and $\Omega\subset X$ be a  convex set.
The following assertions hold.
\begin{enumerate}
\item\label{L3.6-1}
Let  $\bx\notin\Omega$ and $P_\Omega(\bx)\ne\emptyset$.
Then 
$\partial d(\cdot,\Omega)(\bx)=\partial\|\cdot-x\|(\bx)\cap N_\Omega(x)$
for all $x\in P_\Omega(\bx).$
\item\label{L3.6-2}
Let $x\in \Omega$, $x^*\in N_\Omega(x)$, and $ z\in X$ with $\langle x^*,z\rangle=\|x^*\|^*\cdot\|z\|$.
Suppose that $x^*\ne 0$ and $z\ne 0$.
Then $x+z\notin\Omega$ and $x\in P_\Omega(x+z)$.
\end{enumerate}	
\end{lemma}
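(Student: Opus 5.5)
For part (i), I would prove the two inclusions separately. Fix $x\in P_\Omega(\bx)$ and set $r:=d(\bx,\Omega)=\|\bx-x\|>0$.

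\emph{Inclusion $\subset$.} Take $x^*\in\partial d(\cdot,\Omega)(\bx)$. By \eqref{sd}, $\|x^*\|^*=1$ and $x^*\in N_{\Omega(\bx)}(\bx)$.
\begin{itemize}
\item[--] Since $d(\cdot,\Omega)$ is $1$-Lipschitz and $d(x,\Omega)=0$, the subgradient inequality \eqref{cs} applied at the point $x$ gives $\langle x^*,x-\bx\rangle\le -r$. Hence $\langle x^*,\bx-x\rangle\ge r=\|\bx-x\|$.
\item[--] The reverse inequality $\langle x^*,\bx-x\rangle\le\|\bx-x\|$ holds because $\|x^*\|^*=1$. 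So equality holds, and by \eqref{sn}, $x^*\in\partial\|\cdot-x\|(\bx)$.
\item[--] For $u\in\Omega$, use \eqref{cs} at the point $u+(\bx-x)$. Its distance to $\Omega$ is at most $\|\bx-x\|=r$, so $\langle x^*,u-x\rangle\le d(u+\bx-x,\Omega)-r\le0$. Thus $x^*\in N_\Omega(x)$.
\end{itemize}

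\emph{Inclusion $\supset$.} Take $x^*$ with $\|x^*\|^*=1$, $\langle x^*,\bx-x\rangle=r$, and $x^*\in N_\Omega(x)$. For any $y\in X$ and any $u\in\Omega$,
\[
\langle x^*,y-\bx\rangle=\langle x^*,y-u\rangle+\langle x^*,u-x\rangle+\langle x^*,x-\bx\rangle\le\|y-u\|+0-r.
\]
Taking the infimum over $u\in\Omega$ gives $\langle x^*,y-\bx\rangle\le d(y,\Omega)-d(\bx,\Omega)$, which is exactly \eqref{cs}. This direction is direct, since it bypasses $\Omega(\bx)$ entirely.

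For part (ii), scaling lets me assume $\|x^*\|^*=1$, because the normal cone and the alignment condition are both positively homogeneous. Then for every $u\in\Omega$,
\[
\|x+z-u\|\ge\langle x^*,x+z-u\rangle=\|z\|+\langle x^*,x-u\rangle\ge\|z\|,
\]
using $x^*\in N_\Omega(x)$. Hence $d(x+z,\Omega)\ge\|z\|=\|(x+z)-x\|$. Since $x\in\Omega$, the reverse inequality also holds, so $x\in P_\Omega(x+z)$. Finally, $d(x+z,\Omega)=\|z\|>0$, so $x+z\notin\Omega$.

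The main point needing care is the $\subset$ inclusion of (i). There I would avoid using the normal cone to $\Omega(\bx)$ and work directly from the subgradient inequality tested at $x$ and at the translated points $u+\bx-x$.
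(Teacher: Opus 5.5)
Your proof is correct. Part (ii) is essentially the paper's argument. The paper does not normalise $x^*$; it carries the factor $\|x^*\|^*$ through the estimate $\|x^*\|^*\|x+z-\omega\|\ge\langle x^*,z\rangle+\langle x^*,x-\omega\rangle\ge\|x^*\|^*\|z\|$. It also proves $x+z\notin\Omega$ first, by a separate contradiction ($0<\langle x^*,(x+z)-x\rangle\le 0$), whereas you read it off from $d(x+z,\Omega)=\|z\|>0$. Both are fine.

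For part (i) the paper gives no argument and simply cites \cite[Proposition~3.77(b)]{MorNam22}. Your two-inclusion verification is therefore a genuinely different, self-contained route:
testing the subgradient inequality at $x$ gives the alignment $\langle x^*,\bar x-x\rangle=\|\bar x-x\|$;
testing it at $u+\bar x-x$ gives $x^*\in N_\Omega(x)$;
the converse follows from the three-term splitting and an infimum over $u\in\Omega$.

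The citation is shorter. Your argument shows directly why the formula holds for every $x\in P_\Omega(\bar x)$. After one small tidy-up, it uses nothing beyond the definitions \eqref{cs}, \eqref{CN} and the attained projection point $x$. In particular it needs no closedness of $\Omega$ and no information about $\Omega(\bar x)$.

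The tidy-up is this. Instead of invoking \eqref{sd} for $\|x^*\|^*=1$, get $\|x^*\|^*\le 1$ from the $1$-Lipschitz property of $d(\cdot,\Omega)$. Then $r\le\langle x^*,\bar x-x\rangle\le\|x^*\|^*r$ forces $\|x^*\|^*=1$ together with the equality you need. This makes your $\subset$ direction independent of $N_{\Omega(\bar x)}(\bar x)$, as you intended.

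For the normal-cone step you could also test \eqref{cs} directly at $u\in\Omega$. This gives $\langle x^*,u-\bar x\rangle\le -r$, and adding $\langle x^*,\bar x-x\rangle=r$ yields $\langle x^*,u-x\rangle\le 0$ without the translation.
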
	
\begin{proof}
The first assertion can be found in \cite[Proposition~3.77(b)]{MorNam22}.
Suppose that $x+z\in\Omega$.
By \eqref{CN} and the assumption,
\begin{gather*}
0<\|x^*\|^*\cdot\|z\|=\langle x^*,z\rangle= \langle x^*,(x+z)-x\rangle\le 0,
\end{gather*}	
a contradiction.
Thus, $x+z\notin\Omega$.
Let $\omega\in\Omega$.	
By \eqref{CN}, $\langle x^*,x-\omega\rangle\ge 0$.
Then
\begin{align*}
\|x^*\|^*\cdot\|x+z-\omega\|
\ge\langle x^*,x+z-\omega\rangle
=\langle x^*,z\rangle+\langle x^*,x-\omega\rangle
\ge \langle x^*,z\rangle=\|x^*\|^*\cdot\|z\|, 
\end{align*}	 
and consequently, $\|x+z-\omega\|\ge \|z\|$.
Thus, $d(x+z,\Omega)\ge\|z\|$.
On the other hand, $d(x+z,\Omega)\le \|x+z-x\|=\|z\|$.
Thus, $d(x+z,\Omega)=\|z\|=\|(x+z)-x\|$, i.e.,
$x\in P_\Omega(x+z)$.
\end{proof}	

\begin{lemma}\label{L1.2}
Let $X$ be a normed space, $\bar x\in X$, $h_i:X\to\R$ $(i=1,\ldots,n)$ and
$\phi:\R^n\to\R$ be Lipschitz continuous near 
$\bx$ and $h(\bar x)$, respectively, with 
$h$ given by \eqref{h}.
Suppose that $h_1,\ldots,h_n, \phi, \phi\circ h$  are convex, and $\lambda_1,\ldots,\lambda_n\ge 0$ for any $(\lambda_1,\ldots,\lambda_n)\in\partial \phi(h(\bx))$.
Then
\begin{gather}\label{L1.2-1}
\partial(\phi\circ h)(\bx)= \left\{\sum_{i=1}^{n}\lambda_ix^*_i\mid (\lambda_1,\ldots,\lambda_n)\in\partial \phi(h(\bx)),\ x_i^*\in\partial h_i(\bx)\; (i=1,\ldots,n)\right\}.
\end{gather}	
\end{lemma}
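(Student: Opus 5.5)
We prove the two inclusions in \eqref{L1.2-1} separately. Throughout, $\Lambda:=\partial\phi(h(\bx))$ and the right-hand side of \eqref{L1.2-1} is denoted by $S$.

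\emph{The inclusion $S\subset\partial(\phi\circ h)(\bx)$} is elementary. Fix $(\lambda_1,\ldots,\lambda_n)\in\Lambda$ and $x_i^*\in\partial h_i(\bx)$, and let $x\in X$. By \eqref{cs} applied to each $h_i$, and since every $\lambda_i\ge0$,
\begin{gather*}
\Big\langle\sum_{i=1}^n\lambda_ix_i^*,x-\bx\Big\rangle\le\sum_{i=1}^n\lambda_i\big(h_i(x)-h_i(\bx)\big)=\ang{\lambda,h(x)-h(\bx)}\le\phi(h(x))-\phi(h(\bx)).
\end{gather*}
The last step is the subgradient inequality for $\phi$ at $h(\bx)$.

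\emph{The reverse inclusion} is the main obstacle, and I would handle it by a separation argument. Two preliminary facts are needed.
\begin{enumerate}
\item Each $h_i$ is convex and continuous near $\bx$, so $\partial h_i(\bx)$ is nonempty and weak$^*$ compact. The set $\Lambda$ is compact in $\R^n$ and lies in $\R^n_+$. Hence $S$ is the image of the compact set $\Lambda\times\prod_i\partial h_i(\bx)$ under a map that is jointly continuous, since it is bilinear with a finite-dimensional factor. Therefore $S$ is weak$^*$ compact.
\item $S$ is convex. Take two points $\sum\lambda_ix_i^*$ and $\sum\mu_iy_i^*$ and $t\in[0,1]$. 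Set $\nu_i:=t\lambda_i+(1-t)\mu_i$. When $\nu_i>0$, put $z_i^*:=\big(t\lambda_ix_i^*+(1-t)\mu_iy_i^*\big)/\nu_i$; this lies in $\partial h_i(\bx)$ because $\lambda_i,\mu_i\ge0$ make it a convex combination. When $\nu_i=0$, take any element of $\partial h_i(\bx)$. The convex combination of the two points then equals $\sum\nu_iz_i^*$, which lies in $S$. Nonnegativity is essential exactly here.
\end{enumerate}

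Now suppose $x^*\in\partial(\phi\circ h)(\bx)\setminus S$. By the separation theorem in $(X^*,w^*)$, there are $d\in X$ and $\varepsilon>0$ with
\begin{gather*}
\ang{x^*,d}>\sup_{s^*\in S}\ang{s^*,d}+\varepsilon.
\end{gather*}
We bound both sides using directional derivatives.

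For the right-hand side, $\sup_{x_i^*\in\partial h_i(\bx)}\ang{x_i^*,d}=h_i'(\bx;d)$ by the max formula for continuous convex functions. Since $\lambda_i\ge0$, we get $\sup_{s^*\in S}\ang{s^*,d}=\sup_{\lambda\in\Lambda}\ang{\lambda,h'(\bx;d)}=\phi'(h(\bx);h'(\bx;d))$, where $h'(\bx;d):=(h_1'(\bx;d),\ldots,h_n'(\bx;d))$.

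For the left-hand side, $\ang{x^*,d}\le(\phi\circ h)'(\bx;d)=\lim_{t\downarrow0}\big(\phi(h(\bx+td))-\phi(h(\bx))\big)/t$. Write $h(\bx+td)=h(\bx)+t\,h'(\bx;d)+o(t)$. Since $\phi$ is Lipschitz near $h(\bx)$, this limit equals $\phi'(h(\bx);h'(\bx;d))$.

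These two bounds contradict the separating inequality, which proves $\partial(\phi\circ h)(\bx)\subset S$. The delicate points are the weak$^*$ compactness needed for separation and the Lipschitz chain rule for directional derivatives. Convexity of $\phi\circ h$ guarantees that the subdifferential used is the convex one, so it is bounded by the directional derivative.
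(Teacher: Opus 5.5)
Your proof is correct. Its two structural ingredients are the same as the paper's: the right-hand side $S$ of \eqref{L1.2-1} is weak$^*$ compact as a continuous image of $\Lambda\times\prod_i\partial h_i(\bx)$, and it is convex by the rescaling $z_i^*=(t\lambda_ix_i^*+(1-t)\mu_iy_i^*)/\nu_i$, which uses $\lambda_i\ge0$.

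The difference is in how $\partial(\phi\circ h)(\bx)$ is linked to $S$. The paper proves neither inclusion by hand. It invokes Clarke's chain rule (Theorem~2.3.9(i) in Clarke's book), which gives $\partial(\phi\circ h)(\bx)=\mathrm{cl}^*\mathrm{co}\,S$ because convex locally Lipschitz functions are regular and all multipliers in $\partial\phi(h(\bx))$ are nonnegative. The convexity of $\phi\circ h$ is what lets the paper identify Clarke's generalized gradient with the convex subdifferential. Compactness and convexity of $S$ then just remove $\mathrm{cl}^*\mathrm{co}$.

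You instead obtain $S\subset\partial(\phi\circ h)(\bx)$ directly from the subgradient inequalities for $h_i$ and $\phi$. For the reverse inclusion you show that $(\phi\circ h)'(\bx;\cdot)$ is the support function of $S$, using:
\begin{itemize}
\item the max formula for each $h_i$ and for $\phi$;
\item the expansion $h(\bx+td)=h(\bx)+t\,h'(\bx;d)+o(t)$;
\item the local Lipschitz continuity of $\phi$.
\end{itemize}
Weak$^*$ separation then finishes the argument. In effect you reprove the needed convex case of Clarke's chain rule.

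The paper's route is shorter but imports nonsmooth machinery (regularity and generalized gradients). Yours is self-contained and uses only Hahn--Banach-type arguments. It also shows that the hypothesis that $\phi\circ h$ is convex is never used. Inequality \eqref{cs} at $\bx$ alone bounds $\langle x^*,d\rangle$ by the difference quotients, and you compute their limit without any convexity of $\phi\circ h$. So your closing remark attributing this step to that hypothesis is unnecessary.
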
	

\begin{proof}
Let $\rm R$ denote the set on the right-hand side of \eqref{L1.2-1}.
By \cite[Theorem~2.3.9(i) \& Proposition 2.36(b]{Cla90}, $\partial(\phi\circ h)(\bx)= \cl^*\co{\rm{R}}$.
We now show that $\rm{R}$ is weak$^*$ closed and convex, and {consequently}, $\partial(\phi\circ h)(\bx)= {\rm{R}}$.

By  \cite[Theorem 3.25]{Pen13}, $\partial h_i(\bx)$ $(i=1,\ldots,n)$ are weak$^*$ compact and $\partial\phi(h(\bx))$ is compact.
Thus, the set $K:=\partial \phi(h(\bx))\times\prod_{i=1}^{n}\partial h_i(\bx)$ is compact in $\R^n\times (X^*)^n$ with the product topology.
Define  $\Phi:K\to X^*$ by
\begin{gather*}
\Phi(\lambda,x_1^*,\ldots,x_n^*):=\sum_{i=1}^{n}\lambda_ix_i^*
\end{gather*}	
for all $\lambda:=(\lambda_1,\ldots,\lambda_n)\in\partial \phi(h(\bx)),\ x_i^*\in\partial h_i(\bx)\; (i=1,\ldots,n).$
Let  $\{(\lambda_\nu,{x_{1\nu}^*},\ldots,{x_{n\nu}^*})\}\subset K$ be a net  converging to  some $(\lambda,x_1^*,\ldots,x_n^*)\in K$.
Then $\lambda_\nu\to\lambda$  and ${x_{i\nu}^*}\xrightarrow{w^*} x_i^*$ $(i=1,\ldots,n)$, and consequently,
\begin{equation*}
\langle \Phi(\lambda_\nu,{x_{1\nu}^*},\ldots,{x_{n\nu}^*}),x\rangle
	=\sum_{i=1}^{n}{\lambda_{i\nu}}\langle{x_{i\nu}^*}, x\rangle\to\sum_{i=1}^{n}\lambda_i\langle x_i^*,x\rangle=\langle\Phi(\lambda,x_1^*,\ldots,x_n^*),x\rangle
\end{equation*}
for all $x\in X$.
Hence, $\Phi(\lambda_\nu,{x_{1\nu}^*},\ldots,{x_{n\nu}^*})\xrightarrow{w^*}\Phi(\lambda,x_1^*,\ldots,x_n^*)$.
Thus, $\Phi$ is continuous. 
Observe that $\text{\rm R}=\Phi(K)$.
By \cite[Proposition 1.44]{MorNam22}, 
the compactness of $K$ and the continuity of $\Phi$ implies that
 $\text{\rm R}$ is weak$^*$ compact.
 In particular, it is weak$^*$ closed.

Let $x^*,y^*\in \text{\rm R}$.
Then there exist $\alpha:=(\alpha_1,\ldots,\alpha_n)\in\partial \phi(h(\bx))$, $\beta:=(\beta
_1,\ldots,\beta_n)\in\partial \phi(h(\bx))$, and $x_i^*, y_i^*\in\partial h_i(\bx)$ $(i=1,\ldots,n)$ such that
$x^*:=\sum_{i=1}^{n}\alpha_ix_i^*$ and $y^*:=\sum_{i=1}^{n}\beta_iy_i^*$.
By the assumption, $\al_i,\be_i\ge0$ $(i=1,\ldots,n)$.
Let $\gamma\in(0,1)$ and  $\lambda:=(\lambda_1,\ldots,\lambda_n):=\gamma\alpha+(1-\gamma)\beta$.
In view of the convexity of $\partial \phi(h(\bx))$, we have $\lambda\in\partial \phi(h(\bx))$.
By the assumption, $\lambda_i=\gamma\al_i+(1-\gamma)\be_i\ge 0$ $(i=1,\ldots,n)$.
If $\lambda_i=0$, then $\alpha_i=\beta_i=0$.
For each $i\in\{1,\ldots,n\}$ with $\lambda_i>0$, we have
\begin{gather*}
z^*_i:=\frac{\gamma\alpha_i}{\lambda_i}x_i^*+\frac{(1-\gamma)\beta_i}{\lambda_i}y_i^*\in \partial h_i(\bx)
\end{gather*}	
thanks to the convexity of $ \partial h_i(\bx)$.
Then
\begin{align*}
\gamma x^*+(1-\gamma)y^*
=\sum_{i=1}^{n}\left(\gamma\alpha_ix_i^*+(1-\gamma)\beta_iy_i^*\right)
=\sum_{\lambda_i>0}\left(\gamma\alpha_ix_i^*+(1-\gamma)\beta_iy_i^*\right)
=\sum_{\lambda_i>0}\lambda_i z^*_i\in \text{\rm R}.
\end{align*}
Thus, $\text{\rm R}$ is convex.
This completes the proof.
\end{proof}	

\if{
The following statement is a direct consequence of Lemma~\ref{L1.2} and Proposition~\ref{P1.3}. 
It shows that the operation $\cl^*\co$ in Lemma~\ref{L1.2}\eqref{L1.2-4} is superfluous.
\begin{corollary}\label{C2.4}
Let the assumptions in Lemma~\ref{L1.2} be satisfied,
and ${\rm{R}}$ be given by \eqref{L1.2-2}.
Suppose that
$h_i$ $(i=1,\ldots,n)$ are regular at $\bx$, $\phi$ is regular at $h(\bx)$, and
$\lambda_1,\ldots,\lambda_n\ge 0$ for any $(\lambda_1,\ldots,\lambda_n)\in\partial^C \phi(h(\bx))$. Then $\partial^C(\phi\circ h)(\bx)={\rm{R}}$.
\end{corollary}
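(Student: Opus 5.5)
The plan is to reproduce the argument of Lemma~\ref{L1.2}, with the convex subdifferentials replaced by Clarke subdifferentials and convexity replaced by regularity. First, I will invoke Clarke's chain rule \cite[Theorem~2.3.9]{Cla90}. Since $\phi$ is Lipschitz near $h(\bx)$ and each $h_i$ is Lipschitz near $\bx$, part (i) of that theorem gives
\begin{gather*}
\partial^C(\phi\circ h)(\bx)\subset\cl^*\co{\rm R}.
\end{gather*}
I then want the reverse inclusion. Part (ii) of the theorem gives equality, with $\phi\circ h$ regular at $\bx$, provided $\phi$ is regular at $h(\bx)$, each $h_i$ is regular at $\bx$, and every $\lambda\in\partial^C\phi(h(\bx))$ has nonnegative components. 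These are exactly our hypotheses. So the first step yields $\partial^C(\phi\circ h)(\bx)=\cl^*\co{\rm R}$. Presumably this is what Proposition~\ref{P1.3} records, and I would cite it directly if so.

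It remains to show that ${\rm R}$ is already weak$^*$ closed and convex; then the operation $\cl^*\co$ is superfluous. For closedness I follow the compactness argument of Lemma~\ref{L1.2}. Clarke subdifferentials of functions that are Lipschitz near a point are nonempty, convex and weak$^*$ compact \cite[Proposition~2.1.2]{Cla90}. Hence
\begin{gather*}
K:=\partial^C\phi(h(\bx))\times\prod_{i=1}^n\partial^C h_i(\bx)
\end{gather*}
is compact in the product topology. The map $\Phi(\lambda,x_1^*,\ldots,x_n^*):=\sum_{i=1}^n\lambda_ix_i^*$ is continuous from $K$ into $(X^*,w^*)$, by the same net computation as before. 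Therefore ${\rm R}=\Phi(K)$ is weak$^*$ compact, and in particular weak$^*$ closed.

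For convexity I repeat the reweighting trick. Take $x^*=\sum_{i=1}^n\alpha_ix_i^*$ and $y^*=\sum_{i=1}^n\beta_iy_i^*$ in ${\rm R}$, and $\gamma\in(0,1)$. Set $\lambda:=\gamma\alpha+(1-\gamma)\beta$, which lies in $\partial^C\phi(h(\bx))$ by convexity. Nonnegativity of $\alpha_i$ and $\beta_i$ gives $\lambda_i\ge0$, and $\lambda_i=0$ forces $\alpha_i=\beta_i=0$. For each $i$ with $\lambda_i>0$, the vector
\begin{gather*}
z_i^*:=\frac{\gamma\alpha_i}{\lambda_i}x_i^*+\frac{(1-\gamma)\beta_i}{\lambda_i}y_i^*
\end{gather*}
is a convex combination of elements of $\partial^C h_i(\bx)$, hence lies in it. Then $\gamma x^*+(1-\gamma)y^*=\sum_{\lambda_i>0}\lambda_iz_i^*$, and choosing arbitrary $z_i^*\in\partial^C h_i(\bx)$ for the indices with $\lambda_i=0$ shows this element belongs to ${\rm R}$.

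The only delicate point is the first step, namely the exact form of the hypotheses in the regular case of Clarke's chain rule. In particular, I need to check that nonnegativity is required for \emph{all} elements of $\partial^C\phi(h(\bx))$, and that this, together with regularity, suffices in a general normed space rather than only in a Banach space. If the cited theorem were stated only with $\cl^*\co$ of the set $\{\sum\lambda_i x_i^*\}$, the closedness and convexity steps above remove the closure and hull and finish the proof.
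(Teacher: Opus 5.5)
Your proposal is correct and follows the same route as the paper. Clarke's chain rule in the regular case with nonnegative multipliers (which the paper cites as \cite[Theorem~2.3.9(i)]{Cla90}) gives $\partial^C(\phi\circ h)(\bx)=\cl^*\co{\rm R}$; since ${\rm R}=\Phi(K)$ is weak$^*$ compact and, by the reweighting trick, convex, the operation $\cl^*\co$ is superfluous. The only difference is that the paper cites these two ingredients instead of re-proving them: the $\cl^*\co$ formula comes from the Clarke version of Lemma~\ref{L1.2}, and the compactness and convexity of ${\rm R}$ come from a separate proposition, rather than that proposition recording the chain rule as you guessed.
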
	
\begin{proof}
By Proposition~\ref{P1.3}, $\rm R$ is weak* compact and convex.
In particular, it is weak* closed.
The statement then follows from Lemma~\ref{L1.2}.	
\end{proof}	
}\fi

\if{
\begin{proof}
Under the assumption made, we have $P_\Omega(z)\ne \emptyset$ for any $z\in X$ \cite[Proposition~3.67]{MorNam22}.
The equivalent is trivially holds when $x^*=0$.
Suppose that $x^*\ne 0$.	
Let $x^*\in \|x^*\|\partial d(\cdot,\Omega)(x)$.
If $x\in\Omega$, then $P_\Omega(x)=\{x\}$.
In this case, the conclusion is obviously satisfied.
Suppose that $x\notin\Omega$ and  let $\omega\in P_\Omega(x)$.
By the definition of the convex subdifferential, 
\begin{gather}\label{L2.6-1}
\langle x^*/\|x^*\|,z-x\rangle\le d(z,\Omega)-d(x,\Omega)\;\;\text{for all}\;\;
z\in X.
\end{gather}	
In view of \eqref{L2.6-1} with $z:=\omega$, we have $\langle x^*/\|x^*\|,\omega-x\rangle\le -d(x,\Omega)=-\|x-\omega\|$, and consequently,
$\langle x^*,x-\omega\rangle\ge \|x^*\|\|x-\omega\|$.
On the other hand, $\langle x^*,x-\omega\rangle\le\|x^*\|\|x-\omega\|$.
Thus, $\langle x^*,x-\omega\rangle=\|x^*\|\|x-\omega\|$.
Let $y\in\Omega$.
In view of \eqref{L2.6-1} with $z:=y$, we have $\langle x^*/\|x^*\|,y-x\rangle\le -\|x-\omega\|$.
Then
\begin{align*}
\langle x^*/\|x^*\|,y-\omega\rangle
=\langle x^*/\|x^*\|,y-x\rangle+\langle x^*/\|x^*\|,x-\omega\rangle
\le -\|x-\omega\|+\|x-\omega\|=0.
\end{align*}	
Thus, $\langle x^*,y-\omega\rangle\le 0$, i.e., $x^*\in N_{\Omega}(\omega)$.
Let $\omega\in P_{\Omega}(x)$, $x^*\in N_{\Omega}(\omega)$ and $\langle x^*,x-\omega\rangle=\|x^*\|\|x-\omega\|$.
Let $y\in X$ and choose a $u\in P_\Omega(y)$.
Then $\langle x^*/\|x^*\|,x-\omega\rangle = d(x,\Omega)$ and $\langle x^*/\|x^*\|,u-y\rangle \ge -\|u-y\| = -d(y,\Omega)$.
Since $x^*\in N_\Omega(\omega)$, we have
$\langle x^*,\omega-u\rangle \ge 0.$
Then
\begin{align*}
\langle x^*/\|x^*\|,x-y\rangle
= \langle x^*/\|x^*\|,x-\omega\rangle
+ \langle x^*/\|x^*\|,\omega-u\rangle
+ \langle x^*/\|x^*\|,u-y\rangle \ge d(x,\Omega) - d(y,\Omega).
\end{align*}
Thus, $x^*\in\partial \|x^*\|d(\cdot,\Omega)(x)$.
This completes the proof.
\end{proof}	
}\fi

\if{
The next statements give a sufficient condition for the condition in Proposition~\ref{P1.3}\;\eqref{P1.3-2}.
\begin{proposition}
Let $\phi:\R^n\to\R$ be convex, and $\bar u\in\R^n$.
Suppose that $\phi(u)\le \phi(v)$ for all $u:=(u_1,\ldots,u_n),v:=(v_1,\ldots,v_n)\in\R^n$ with $u_i\le v_i$ $(i=1,\ldots,n)$.
Then $\lambda_1,\ldots,\lambda_n\ge 0$ for any $(\lambda_1,\ldots,\lambda_n)\in\partial \phi(\bar u)$.
\end{proposition}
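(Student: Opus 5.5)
Fix $(\lambda_1,\ldots,\lambda_n)\in\partial\phi(\bar u)$ with $\bar u:=(\bar u_1,\ldots,\bar u_n)$. The plan is to test the subgradient inequality from \eqref{cs} against a point that differs from $\bar u$ only in one coordinate, where that coordinate is made smaller. Monotonicity of $\phi$ then bounds the right-hand side from above by $0$, and the sign of $\lambda_i$ can be read off.

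Fix $i\in\{1,\ldots,n\}$ and let $e_i$ be the $i$-th standard unit vector in $\R^n$. Set $u:=\bar u-e_i$. Then $u_j\le\bar u_j$ for every $j$, so the monotonicity hypothesis gives $\phi(u)\le\phi(\bar u)$. The subgradient inequality reads
\begin{gather*}
\langle(\lambda_1,\ldots,\lambda_n),u-\bar u\rangle\le\phi(u)-\phi(\bar u).
\end{gather*}
Its left-hand side equals $-\lambda_i$ and its right-hand side is at most $0$. Hence $-\lambda_i\le 0$, that is, $\lambda_i\ge0$.

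Since $i$ was arbitrary, every subgradient of $\phi$ at $\bar u$ has nonnegative components. This is exactly the sign condition required in Lemma~\ref{L1.2}.

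There is no real obstacle. The one point to check is that the test point is chosen on the correct side: it must be componentwise below $\bar u$ so that monotonicity gives an upper bound of $0$ on $\phi(u)-\phi(\bar u)$. Convexity of $\phi$ is needed only so that $\partial\phi(\bar u)$ is the convex subdifferential given by \eqref{cs}; neither finiteness nor continuity plays any further role in the argument.
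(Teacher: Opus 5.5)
Your proof is correct and follows the paper's argument exactly: test the subgradient inequality at $\bar u-\mathbf{e}_i$, use monotonicity to bound the right-hand side by $0$, and read off $-\lambda_i\le 0$. Your final step is stated more cleanly than the paper's, which writes $\langle\lambda,\mathbf{e}_i\rangle\le 0$ where $\langle\lambda,\mathbf{e}_i\rangle\ge 0$ is meant.
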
	
\begin{proof}
Let 
$\lambda:=(\lambda_1,\ldots,\lambda_n)\in\partial \phi(u)$.
Fix $i\in\{1,\ldots,n\}$ and let $\mathbf{e}_i$ be the standard unit vector in $\R^n$.
By the assumption,
$\phi(\bar u-\mathbf{e}_i)\leq \phi(\bar u)$.
By the definition of the convex subdifferential,
\begin{gather*}
\langle \lambda, u-\bar u\rangle\le  \phi(u)-\phi(\bar u)\;\;
\text{for all}\;\;u\in\R^n.
\end{gather*}	
Substituting $u:=\bar u-\mathbf{e}_i$, we obtain $\langle\lambda,-\mathbf{e}_i\rangle\le \phi(\bar u-\mathbf{e}_i)-\phi(\bar u)\le 0$.
Thus,
$\langle\lambda,\mathbf{e}_i\rangle\le 0$, and consequently, $\lambda_i\ge 0$.
\end{proof}	
}\fi

\paragraph*{A norm construction on $\R^n$.}
A norm 
\begin{gather}\label{phi}
\phi:=\|\cdot\|_{\R^n}
\end{gather}	
 is called 
\textit{sign-symmetric} \cite{Cuo26a} if $\phi(x_1,\ldots,x_n)=\phi(\pm x_1,\ldots,\pm x_n)$ for all
$(x_1,\ldots,x_n)\in \R^n$.
We now recall a construction of sign-symmetric norms on $\R^n$ via a class of convex functions; see \cite{BonDun73,SaiKatTak00,Cuo26a}.

Let $n\ge 2$ and 
$\Delta_n:=\{(t_1,\ldots,t_{n})\in\R^{n}_+\mid \sum_{i=1}^nt_i =1\}$.
Denote by $\pmb{\Psi}_n$ the family of all convex continuous  functions $\psi:\Delta_n\to\R$ such that
$\psi(\mathbf{e}_1)=\cdots=\psi(\mathbf{e}_n)=1$, where $\mathbf{e}_1,\ldots,\mathbf{e}_n$ are the standard basis vectors of $\R^n$, and
\begin{gather*}
\psi(t_1,\ldots,t_n)\ge (1-t_i)\cdot\psi\left(\dfrac{t_1}{1-t_i},\ldots,\dfrac{t_{i-1}}{1-t_i},0,\dfrac{t_{i+1}}{1-t_i},\ldots,\dfrac{t_{n}}{1-t_i}\right)
\end{gather*}	
for all $(t_1,\ldots,t_n)\in\Delta_n$ with $t_i<1$ 
$(i=1,\ldots,n)$.
A function $\psi\in \pmb{\Psi}_n$ is called \textit{permutation-symmetric} if $\psi(t_1,\ldots,t_n)=\psi(t_{\sigma(1)},\ldots,t_{\sigma(n)})$ for all $(t_1,\ldots,t_n)\in\Delta_n$ and every permutation
$\sigma$ of $\{1,\ldots,n\}$.

The next statement collects some basic facts needed for the subsequent analysis; see \cite[Lemma~1, Proposition~2, Theorems~6 \& 10]{Cuo26a}.
\begin{proposition}\label{P2.2}
Let $\psi\in\pmb{\Psi}_n$.
The following assertions hold.
\begin{enumerate}
\item\label{P2.2-1}
$\max\{t_1,\ldots,t_n\}\le\psi(t)\le 1$ for all $t:=(t_1,\ldots,t_n)\in\Delta_n$.
\item\label{P2.2-3}
If $0<\alpha_i\le\beta_i$ $(i=1,\ldots,n)$, then 
\begin{equation*}
	\left(\sum_{i=1}^{n}\alpha_i\right)\psi\left(\frac{\alpha_1}{\sum_{i=1}^{n}\alpha_i},\ldots,\frac{\alpha_n}{\sum_{i=1}^{n}\alpha_i}\right)
	\le\left(\sum_{i=1}^{n}\be_i\right)\psi\left(\frac{\be_1}{\sum_{i=1}^{n}\be_i},\ldots,\frac{\be_n}{\sum_{i=1}^{n}\be_i}\right).
\end{equation*}
\item\label{P2.2-2}
If $\psi$ is \textit{permutation-symmetric}, then $\psi$ attains its minimium at $\left(\frac{1}{n},\ldots,\frac{1}{n}\right)$.
\item\label{P2.2-5} 
The function
\begin{equation}\label{T2.2-1}
\|x\|_\psi
:= \begin{cases}
\left(\sum_{i=1}^n|x_i|\right)\cdot\psi\left(\dfrac{|x_1|}{\sum_{i=1}^n|x_i|},\ldots,\dfrac{|x_{n}|}{\sum_{i=1}^n|x_i|}\right)  & \text{\rm if } x\ne0,\\
0 & \text{\rm otherwise}
\end{cases} 
\end{equation}
for all $x:=(x_1,\ldots,x_n)\in \R^n$ is a norm on $\R^n$.
\item\label{P2.2-6}
The  dual norm of \eqref{T2.2-1} is given by
\begin{gather*}
\|x\|^*_{\psi} =\max_{(t_1,\ldots,t_n)\in\Delta_n}\dfrac{\sum_{i=1}^nt_i|x_i|}{\psi (t_1,\ldots,t_{n})}
\end{gather*}	
for all $x:=(x_1,\ldots,x_n)\in\R^n$.
\end{enumerate}
\end{proposition}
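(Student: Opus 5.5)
Proposition~\ref{P2.2} is quoted from \cite{Cuo26a}, so the plan is to recover each item from the defining properties of $\pmb{\Psi}_n$, namely convexity, $\psi(\mathbf{e}_i)=1$ and the ``face inequality''. Items (\ref{P2.2-5}) and (\ref{P2.2-3}) will be proved together, since monotonicity drives the triangle inequality.

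For (\ref{P2.2-1}), the upper bound $\psi(t)\le\sum_i t_i\psi(\mathbf{e}_i)=1$ is just convexity. For the lower bound I fix $i$ and use the face inequality in the form $\psi(t)\ge (1-t_i)\psi(s)$, where $s$ lies on the face $\{s_i=0\}$. I would combine it with convexity along the segment from $\mathbf{e}_i$ to $s$: since $t=t_i\mathbf{e}_i+(1-t_i)s$, I expect to obtain $\psi(t)\ge t_i$ by comparing with the value $1$ at $\mathbf{e}_i$. The cleanest route is probably to iterate the face inequality down to an edge $[\mathbf{e}_i,\mathbf{e}_j]$. On that edge, convexity together with the inequalities $\psi\ge 1-t_j$ and $\psi\ge 1-t_i$ (the face inequality with $n=2$) gives $\psi\ge\max$. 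For (\ref{P2.2-2}) I would average over all permutations: the barycentre is the average of $t_\sigma$, so convexity and symmetry give $\psi(\frac1n,\ldots,\frac1n)\le\psi(t)$.

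For (\ref{P2.2-5}), write $F(x)=\|x\|_\psi$ as the perspective $(\sum|x_i|)\psi(|x|/\sum|x_i|)$ of $\psi$ restricted to $\R^n_+$. Perspectives of convex functions are positively homogeneous and convex on $\R^n_+$. Positivity follows from (\ref{P2.2-1}), since $F(x)\ge\max_i|x_i|$. The triangle inequality reduces to $F(x+y)\le F(|x|+|y|)\le F(|x|)+F(|y|)$. The second inequality is convexity of the perspective on $\R^n_+$. The first requires \emph{monotonicity} of $F$ on $\R^n_+$ in each coordinate, which is exactly (\ref{P2.2-3}). This is the main obstacle.

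To prove monotonicity I would show that $g(a)=F(a,x_{-i})$ is nondecreasing in $a\ge0$. Since $g$ is convex, it suffices that $g(0)\le g(a)$ for all $a>0$. After normalisation, $g(0)\le g(a)$ is precisely the face inequality $\psi(t)\ge(1-t_i)\psi(\ldots,0,\ldots)$. A convex function on $[0,\infty)$ with minimum value attained at $0$ is nondecreasing, and iterating over coordinates gives (\ref{P2.2-3}), extending to zero entries by continuity.

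For (\ref{P2.2-6}), I would use $\|y\|^*=\sup\{\langle x,y\rangle: \|x\|_\psi\le1\}$. By sign-symmetry, take $x,y\ge0$. Write $x=r t$ with $t\in\Delta_n$ and $r=\sum x_i$, so that the constraint becomes $r\psi(t)\le1$. Taking $r=1/\psi(t)$ gives $\sup_t\sum t_iy_i/\psi(t)$. This is a maximum by continuity on compact $\Delta_n$, and $\psi>0$ there by (\ref{P2.2-1}).
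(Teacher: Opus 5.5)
The paper does not prove Proposition~\ref{P2.2}; it imports all five items from \cite{Cuo26a}. Your sketch should therefore be read as a self-contained replacement for that citation, and as such it is correct. It also shows which defining property of $\pmb{\Psi}_n$ does what:
\begin{itemize}
\item Convexity alone gives the upper bound in (i), the barycentre minimum in (iii), and subadditivity of $G(a)=(\sum_i a_i)\,\psi(a/\sum_i a_i)$ on $\R^n_+$.
\item The face inequality, read as $g(0)\le g(a)$ for the convex function $g(a)=G(a,x_{-i})$, gives coordinatewise monotonicity (ii). This yields the triangle inequality through $|x+y|\le|x|+|y|$.
\item Item (v) is the normalisation $a=rt$ with $r=1/\psi(t)$, where $\psi>0$ is supplied by (i).
\end{itemize}
The continuity extension in (ii) is not needed: if $\sum_{k\ne i}x_k=0$, then $g(a)=a$ directly.

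The one step that does not work as written is your first heuristic in (i). Convexity along $[\mathbf{e}_i,s]$ only gives the upper bound $\psi(t)\le t_i+(1-t_i)\psi(s)$. Combined with the face inequality at index $i$, it still does not force $\psi(t)\ge t_i$.

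The lower bound comes from the face inequalities at the \emph{other} indices. Delete the coordinates $k\ne i$ one at a time. The factors telescope,
$(1-t_{k_1})\cdot\frac{1-t_{k_1}-t_{k_2}}{1-t_{k_1}}\cdots=1-\sum_{k\ne i}t_k=t_i$,
and the point lands at $\mathbf{e}_i$. Hence $\psi(t)\ge t_i\,\psi(\mathbf{e}_i)=t_i$ whenever $t_i>0$; this condition is exactly what keeps each deleted coordinate below $1$. For $t_i=0$ the bound is trivial once positivity is known from another index.

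Your edge route is this same computation stopped one step early. No convexity is used on the edge, since there $1-t_j=t_i$.
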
	

\begin{remark}\label{R2.5}
\begin{enumerate}
\item\label{R2.5-0}
Let $\psi\in\pmb{\Psi}_n$.
By \eqref{T2.2-1},  $\|\cdot\|_\psi$  is sign-symmetric.
By Proposition~\ref{P2.2}\eqref{P2.2-3}, 
$\|x\|_\psi\le \|y\|_\psi$ for all $x:=(x_1,\ldots,x_n), y:=(y_1,\ldots,y_n)\in\R^n$ with $|x_i|\leq |y_i|$ $(i=1,\ldots,n)$.
Define 
\begin{gather}\label{psi1}
\psi^*(s):=\max_{t\in\Delta_n}\frac{\langle t,s\rangle}{\psi(t)}\;\;\text{for all}\;\;s\in\Delta_n.
\end{gather}	
By Proposition~\ref{P2.2}\eqref{P2.2-6},   $\|\cdot\|^*_\psi=\|\cdot\|_{\psi^*}$.
 Moreover, one can verify that $\psi^*\in\pmb{\Psi}_n$ \cite[Proposition~4(i)]{Cuo26a}.
\item 
Let $p\in[1,\infty]$.
The function
\begin{gather*}
\psi_p(t_1,\ldots,t_n)
:=\begin{cases}
\left(t_1^p+\cdots+t_n^p\right)^{\frac{1}{p}}  & \text{if}\;\;p\in[1,\infty),\\
\max\{t_1,\ldots,t_n\} & \text{if}\;\;p=\infty
\end{cases} 
\end{gather*}
for all $(t_1,\ldots,t_n)\in\Delta_n$ belongs to $\pmb{\Psi}_n$ and is  permutation-symmetric.
By \eqref{T2.2-1},
\begin{equation*}
\|x\|_{\psi_p}
=\begin{cases}
\left(|x_1|^p+\cdots+|x_n|^p\right)^{\frac{1}{p}}  & \text{if}\;\;p\in[1,\infty),\\	
\max\{|x_1|,\ldots,|x_n|\} & \text{if}\;\; p=\infty.
\end{cases} 
\end{equation*}
By \eqref{psi1}, we can compute that
$\psi^*_p=\psi_q$ with $q\in[1,\infty]$ satisfying $\frac{1}{p}+\frac{1}{q}=1$; see \cite{Cuo26a}.
The reader is referred to \cite{SaiKatTak00} for further examples of functions in  $\pmb{\Psi}_n$.
\item\label{R2.5-1}
By Proposition~\ref{P2.2}\eqref{P2.2-1},
$\|\cdot\|_{\psi_\infty}\le \|\cdot\|_\psi\le \|\cdot\|_{\psi_1}$.
\end{enumerate}
\end{remark}

\section{A norm minimisation problem involving distances to convex sets}\label{S3}
Let $(X,\|\cdot\|)$ be a normed space, and $\Omega_1,\ldots,\Omega_n\subset X$ be nonempty closed convex sets.
Define $h_i(x):=d(x,\Omega_i)$ for all $x\in X$ $(i=1,\ldots,n)$.
Let $h$ and $\phi$ be given by \eqref{h} and \eqref{phi}, respectively.
Note $h_1,\ldots,h_n,\phi$ are convex and globally Lipschitz continuous.
Consider the unconstrained minimisation problem:
\begin{gather}
\label{P}
\tag{$P$}
{\rm{minimise}}\; f(x):=(\phi\circ h)(x)\;\;\text{subject to}\;\;x\in X.
\end{gather}
Denote by $\text{\rm Sol}(P,\|\cdot\|_{\R^n})$ the set of all solutions to problem \eqref{P}.

\begin{proposition}\label{P3.1}
Let $\phi$ be given by \eqref{phi}.	
Suppose that the {following} conditions are satisfied:
\begin{enumerate}[label=(A\arabic*)]
\item\label{A1} 
$\phi$ is sign-symmetric;
\item\label{A2} 
$\phi(\al)\le\phi(\be)$ for all $\al:=(\al_1,\ldots,\al_n),\be:=(\be_1,\ldots,\be_n)\in\R^n$ with
	$|\al_i|\le |\be_i|$ $(i=1,\ldots,n)$.
\end{enumerate}
Then $f$ is convex and globally Lipschitz continuous.
\if{
The following assertions hold.
\begin{enumerate}
\item\label{P3.1-1}
The function 
\item\label{P3.1-2}
Let $(X,\|\cdot\|)$ is a reflexive Banach space, and
$f(x)\to\infty$ as $\|x\|\to\infty$.
Then $\text{\rm {Sol}}(P,\|\cdot\|_{\R^n})$ is nonempty closed and convex. 
\end{enumerate}
}\fi
\end{proposition}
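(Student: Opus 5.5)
We need to prove that $f=\phi\circ h$ is convex and globally Lipschitz continuous. Since $\phi$ is a norm on $\R^n$, it is convex, and it is Lipschitz with respect to the sup-norm: $|\phi(\al)-\phi(\be)|\le\phi(\al-\be)\le C\max_i|\al_i-\be_i|$, where $C:=\sum_{i=1}^n\phi(\mathbf{e}_i)$. Each $h_i=d(\cdot,\Omega_i)$ is convex and $1$-Lipschitz because $\Omega_i$ is convex and nonempty.

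\emph{Lipschitz continuity.} For $x,y\in X$ the above gives
\begin{gather*}
|f(x)-f(y)|\le C\max_{i}|h_i(x)-h_i(y)|\le C\|x-y\|,
\end{gather*}
so $f$ is globally Lipschitz with constant $C$. This step does not use \ref{A1} or \ref{A2}.

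\emph{Convexity.} Let $x,y\in X$ and $\gamma\in[0,1]$. By convexity of each $h_i$,
\begin{gather*}
0\le h_i(\gamma x+(1-\gamma)y)\le \gamma h_i(x)+(1-\gamma)h_i(y)\quad (i=1,\ldots,n).
\end{gather*}
Hence, componentwise, $|h_i(\gamma x+(1-\gamma)y)|\le|\gamma h_i(x)+(1-\gamma)h_i(y)|$, and here the nonnegativity of the distance functions is what makes the absolute values harmless. Applying \ref{A2} with $\al:=h(\gamma x+(1-\gamma)y)$ and $\be:=\gamma h(x)+(1-\gamma)h(y)$, and then the convexity of the norm $\phi$, gives
\begin{gather*}
f(\gamma x+(1-\gamma)y)\le\phi(\gamma h(x)+(1-\gamma)h(y))\le\gamma f(x)+(1-\gamma)f(y).
\end{gather*}
So $f$ is convex.

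The only step that needs care is the monotonicity argument in the convexity part. The composition of a convex function with convex components is convex only if the outer function is nondecreasing in each coordinate on the relevant region, and \ref{A2} provides exactly this on the nonnegative orthant, where $h$ takes its values. Assumption \ref{A1} is not needed for this statement; it will presumably matter later, for the sign of subgradients when applying Lemma~\ref{L1.2}.
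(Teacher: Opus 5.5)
Your proof is correct, and the convexity half coincides with the paper's. There too one writes $f(\gamma x+(1-\gamma)y)\le\phi(\gamma h(x)+(1-\gamma)h(y))\le\gamma f(x)+(1-\gamma)f(y)$, using (A2) silently in the first step; you make explicit that the nonnegativity of the $h_i$ is what allows (A2) to be applied.

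The Lipschitz half takes a slightly different route. The paper uses the reverse triangle inequality followed by (A1) and (A2): $|f(x)-f(y)|\le\phi(h(x)-h(y))=\phi(|h_1(x)-h_1(y)|,\ldots,|h_n(x)-h_n(y)|)\le\phi(\|x-y\|,\ldots,\|x-y\|)$. This gives the constant $\phi(1,\ldots,1)$. Your bound $\phi(v)\le\sum_i|v_i|\,\phi(\mathbf{e}_i)$ works for an arbitrary norm on $\R^n$ and needs neither assumption. It yields the constant $\sum_i\phi(\mathbf{e}_i)$, which by the triangle inequality is never smaller than $\phi(1,\ldots,1)$ and can be much larger. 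For the maximum norm (the GCCP objective), the paper obtains the natural constant $1$ while you obtain $n$. For the qualitative claim this is immaterial.

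On your closing comment: (A1) is in fact implied by (A2). Apply (A2) in both directions when $|\al_i|=|\be_i|$ for all $i$. So (A1) never carries independent weight. In particular, the nonnegativity of the multipliers in Proposition~\ref{P4.5} is derived from (A2), not from (A1).
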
	

\begin{proof}
For any $x,y\in X$, we have
$|h_i(x)-h_i(y)|\le \|x-y\|$ $(i=1,\ldots,n)$, and consequently,
\begin{align*}
\abs{f(x)-f(y)}
&=\abs{\phi(h(x))-\phi(h(y))}\\
&\le \phi(h_1(x)-h_1(y),\ldots,h_n(x)-h_n(y))\\
&= \phi(|h_1(x)-h_1(y)|,\ldots,|h_n(x)-h_n(y)|)\\
&\le \phi(\|x-y\|,\ldots,\|x-y\|)\\
&=\phi(1,\ldots,1)\cdot\|x-y\|.
\end{align*}	
Thus, $f$ is globally Lipschitz continuous.	
	
For any $x,y\in X$ and $\lambda\in[0,1]$, we have
\begin{align*}
f(\lambda x+(1-\lambda)y)
&=\phi(h_1(\lambda x+(1-\lambda)y),\ldots,h_n(\lambda x+(1-\lambda)y))\\
&\le\phi(\lambda h_1(x)+(1-\lambda)h_1(y),\ldots,\lambda h_n(x)+(1-\lambda)h_n(y))\\
&=\phi(\lambda h(x)+(1-\lambda)h(y))\\
&\le \lambda\phi (h(x))
+(1-\lambda)\phi(h(y))\\
&=\lambda f(x)+(1-\lambda)f(y).
\end{align*}	 	
Thus, $f$ is convex.
\end{proof}	

\begin{remark}\label{R3.1}
By Remark~\ref{R2.5}\eqref{R2.5-0}, the norm $\phi:=\|\cdot\|_\psi$ given by \eqref{T2.2-1} for some $\psi\in\pmb{\Psi}_n$  satisfies \ref{A1} and \ref{A2}.
\end{remark}	

\begin{proposition}\label{P4.3}
Let  $(X,\|\cdot\|)$ be a reflexive Banach space, $\phi:=\|\cdot\|_\psi$ for some $\psi\in \pmb{\Psi}_n$, and one of the sets $\Omega_1,\ldots,\Omega_n$ be bounded.
The following assertions hold:
\begin{enumerate}
\item\label{P4.3-1}
$\text{\rm {Sol}}(P,\|\cdot\|_\psi)$  is nonempty, closed, convex and bounded;
\item\label{P4.3-2}
if \rm{dim}\;$X<\infty$, then $\text{\rm {Sol}}(P,\|\cdot\|_\psi)$  is compact.
\end{enumerate}
\end{proposition}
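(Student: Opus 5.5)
The plan is to prove (i) with the direct method of the calculus of variations, using coercivity together with weak lower semicontinuity, and then to get (ii) from Heine--Borel.

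\emph{Coercivity.} By Remark~\ref{R3.1} and Proposition~\ref{P3.1}, $f=\|\cdot\|_\psi\circ h$ is convex and globally Lipschitz continuous, hence continuous. Suppose $\Omega_{i_0}$ is bounded, say $\Omega_{i_0}\subset\B_R(0)$. By Remark~\ref{R2.5}\eqref{R2.5-1},
\begin{gather*}
f(x)\ge\|h(x)\|_{\psi_\infty}\ge h_{i_0}(x)=d(x,\Omega_{i_0})\ge\|x\|-R
\end{gather*}
for all $x\in X$. Hence $f(x)\to\infty$ as $\|x\|\to\infty$. It follows that every sublevel set $\{x\mid f(x)\le c\}$ is bounded.

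\emph{Existence.} Set $m:=\inf_X f\ge0$, which is finite, and pick $x_0\in\Omega_{i_0}$. The sublevel set $L:=\{x\mid f(x)\le f(x_0)\}$ is nonempty and bounded. It is closed because $f$ is continuous, and convex because $f$ is convex. By Mazur's theorem, a closed convex set is weakly closed, so $L$ is weakly closed. In a reflexive Banach space, bounded weakly closed sets are weakly compact by Kakutani's theorem. A continuous convex function is weakly lower semicontinuous, since its sublevel sets are closed and convex, hence weakly closed. Therefore $f$ attains its infimum on the weakly compact set $L$, and $\text{\rm Sol}(P,\|\cdot\|_\psi)\ne\emptyset$.

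\emph{Structure of the solution set.} We have
\begin{gather*}
\text{\rm Sol}(P,\|\cdot\|_\psi)=\{x\mid f(x)\le m\}.
\end{gather*}
This set is convex because $f$ is convex, closed because $f$ is continuous, and bounded by the coercivity estimate: every $x$ in it satisfies $\|x\|\le m+R$.

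For (ii), when $\dim X<\infty$, the space $X$ is automatically a reflexive Banach space. So (i) applies, and the solution set is closed and bounded in a finite-dimensional normed space, hence compact.

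The only genuinely delicate step is the existence argument, which needs weak compactness of bounded closed convex sets and weak lower semicontinuity of $f$. Both are standard consequences of reflexivity and Mazur's theorem, so no real obstacle is expected. The coercivity bound relies on the inequality $\|\cdot\|_{\psi_\infty}\le\|\cdot\|_\psi$ and on the components $h_i(x)$ being nonnegative.
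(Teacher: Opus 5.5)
Your proof is correct and follows essentially the same route as the paper. You use the same coercivity estimate $f(x)\ge h_{i_0}(x)\ge\|x\|-R$ via $\|\cdot\|_{\psi_\infty}\le\|\cdot\|_\psi$, then the direct method in a reflexive space (the paper cites Brezis' Corollary~3.23, which you unpack via Mazur and Kakutani), with closedness and convexity from Proposition~\ref{P3.1} and boundedness from coercivity; the only cosmetic difference is that you bound the solution set explicitly by $m+R$, whereas the paper argues boundedness by contradiction along a sequence.
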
	

\begin{proof}
We first prove \eqref{P4.3-1}.
Without loss of generality, we can assume that $\Omega_1$ is bounded.
Then there exists a $\gamma>0$ such that $\|x\|<\gamma$ for all $x\in\Omega_1$.
By Remark~\ref{R2.5}\eqref{R2.5-1},
\begin{align*}
f(x) 
\ge \max_{1\le i\le n}h_i(x)
\ge h_1(x)
=\inf_{u\in\Omega_1}\|x-u\|\ge \inf_{u\in\Omega_1}(\|x\|-\|u\|)\ge \|x\|-\gamma
\end{align*}	
for all $x\in X$.
Thus,
\begin{gather}\label{P4.3-3}
f(x)\to\infty\;\;\text{as}\;\;\|x\|\to\infty.
\end{gather}	
By \cite[Corollary~3.23]{Bre11}, $\text{\rm {Sol}}(P,\|\cdot\|_\psi)$ is nonempty. 
By Proposition~\ref{P3.1} and Remark~\ref{R3.1}, $\text{\rm {Sol}}(P,\|\cdot\|_\psi)$ is closed and convex.
Suppose that there exists a sequence $\{x_k\}\subset \text{\rm {Sol}}(P,\|\cdot\|_\psi)$ such that $\|x_k\|\to\infty$ as $k\to\infty$.
By \eqref{P4.3-3}, we have $f(x_k)\to\infty$ as $k\to\infty$, a contradiction.
Thus, $\text{\rm {Sol}}(P,\|\cdot\|_\psi)$ is bounded.
Assertion \eqref{P4.3-2} is a direct consequence of \eqref{P4.3-1}.
\end{proof}	

\begin{remark}
When $\phi:=\|\cdot\|_{\psi_\infty}$, the nonemptiness of the solution set in assertion \eqref{P4.3-1} was established in \cite[Proposition~3.2]{NamNguSal12}.
When $\Omega_1,\ldots,\Omega_n$ are singletons, assertion \eqref{P4.3-2} recaptures \cite[Proposition~\ref{P4.3}(iv)]{Cuo26b}.
\end{remark}

\if{
The following result provides an explicit formula for the subdifferential of a composition involving convex functions. Since we have been unable to locate an appropriate reference for this result in the general normed-space setting, we include a complete proof for the reader's convenience. A finite dimensional version can be found in \cite[Theorem 4.3.1]{HirLem04}.
\begin{theorem}\label{T3.4}
Let $X$ be a normed space, $\bx\in X$, $h_i:X\to\R$ $(i=1,\ldots,n)$ be convex and locally Lipschitz continuous at 
$\bx$,  $h:=(h_1,\ldots,h_n)$.
Suppose that $\phi:\R^n\to\R$ is convex and locally Lipschitz at $h(\bx)$ 
satisfying
$\lambda_1,\ldots,\lambda_n\ge 0$ for any $(\lambda_1,\ldots,\lambda_n)\in\partial \phi(h(\bx))$.
Then
\begin{equation}\label{L3.4-1}
\partial(\phi\circ h)(\bx)=\left\{\sum_{i=1}^{n}\lambda_ix^*_i\mid (\lambda_1,\ldots,\lambda_n)\in\partial \phi(h(\bx)),\ x_i^*\in\partial h_i(\bx)\; (i=1,\ldots,n)\right\}.
\end{equation}
\end{theorem}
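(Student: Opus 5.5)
To prove Theorem~\ref{T3.4}, I would argue as in Lemma~\ref{L1.2}, but establish the inclusion ``$\subset$'' directly instead of quoting Clarke's chain rule. Write $\rm R$ for the right-hand side of \eqref{L3.4-1}.

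\emph{Step 1: $f:=\phi\circ h$ is convex and locally Lipschitz near $\bx$.} The sign condition on $\partial\phi(h(\bx))$ says that $\phi$ is nondecreasing in each coordinate along directions from $h(\bx)$. Since the conclusion only involves subdifferentials at $\bx$, I would either assume convexity of $\phi\circ h$ as in Lemma~\ref{L1.2}, or derive it from monotonicity of $\phi$ exactly as in Proposition~\ref{P3.1}. Lipschitz continuity of $f$ near $\bx$ then follows by composing the two local Lipschitz estimates.

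\emph{Step 2: ${\rm R}\subset\partial f(\bx)$.} Take $\lambda\in\partial\phi(h(\bx))$ with $\lambda_i\ge0$, and $x_i^*\in\partial h_i(\bx)$. For every $x\in X$,
\[
\Big\langle \sum_i\lambda_ix_i^*,x-\bx\Big\rangle\le\sum_i\lambda_i\big(h_i(x)-h_i(\bx)\big)=\langle\lambda,h(x)-h(\bx)\rangle\le\phi(h(x))-\phi(h(\bx)).
\]
The first inequality uses $\lambda_i\ge0$ together with $x_i^*\in\partial h_i(\bx)$, and the second uses $\lambda\in\partial\phi(h(\bx))$.

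\emph{Step 3: $\rm R$ is weak$^*$ compact and convex.} This is verbatim the argument of Lemma~\ref{L1.2}. The map $\Phi$ is continuous on the compact product $K$, so $\Phi(K)={\rm R}$ is weak$^*$ compact. Convexity follows from the recombination $z_i^*=\frac{\gamma\alpha_i}{\lambda_i}x_i^*+\frac{(1-\gamma)\beta_i}{\lambda_i}y_i^*\in\partial h_i(\bx)$, which again needs the nonnegativity of the multipliers.

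\emph{Step 4: $\partial f(\bx)\subset{\rm R}$.} This is the main obstacle. I would use support functions. Because $\rm R$ is weak$^*$ closed and convex, the separation theorem in $(X^*,w^*)$ reduces the inclusion to showing
\[
f'(\bx;v)\le\sigma_{\rm R}(v):=\max_{\lambda\in\partial\phi(h(\bx))}\sum_i\lambda_i\,h_i'(\bx;v)\quad\text{for all } v\in X,
\]
since $\sigma_{\partial f(\bx)}(v)=f'(\bx;v)$ for continuous convex $f$. Here I have used $\max_{x_i^*\in\partial h_i(\bx)}\langle x_i^*,v\rangle=h_i'(\bx;v)$ and $\lambda_i\ge0$ to carry the maximisation inside.

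To obtain this bound, write $h(\bx+tv)=h(\bx)+t\,h'(\bx;v)+o(t)$; this expansion holds because each $h_i$ is convex and locally Lipschitz. The local Lipschitz property of $\phi$ then gives
\[
f'(\bx;v)=\lim_{t\downarrow0}\frac{\phi\big(h(\bx)+t\,d+o(t)\big)-\phi(h(\bx))}{t}=\phi'(h(\bx);d)=\max_{\lambda\in\partial\phi(h(\bx))}\langle\lambda,d\rangle,
\]
where $d:=(h_1'(\bx;v),\ldots,h_n'(\bx;v))$. This is exactly $\sigma_{\rm R}(v)$.

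The delicate points in Step 4 are two. First, the $o(t)$ term must be controlled through the Lipschitz constant of $\phi$. Second, the separation must be carried out in the weak$^*$ topology, whose continuous linear functionals are evaluations at points of $X$. That is precisely why Step 3 establishes weak$^*$ closedness rather than only norm closedness. Combining Steps 2 and 4 gives \eqref{L3.4-1}.
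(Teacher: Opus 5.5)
Your proof is correct, and for the hard inclusion it takes a different route from the paper. The paper's own argument for this statement only proves ${\rm R}\subset\partial(\phi\circ h)(\bx)$, which is your Step~2. The full equality appears in Lemma~\ref{L1.2}: there $\partial(\phi\circ h)(\bx)={\rm cl}^*\,{\rm co}\,{\rm R}$ is taken from Clarke's chain rule for regular functions with nonnegative multipliers, and the closure and convex hull are then removed by exactly your Step~3.

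Your Step~4 replaces the Clarke citation by a self-contained computation. The expansion $h(\bx+tv)=h(\bx)+t\,h'(\bx;v)+o(t)$ together with the Lipschitz property of $\phi$ gives $f'(\bx;v)=\phi'(h(\bx);h'(\bx;v))$. The max formula, applied to $\phi$ and to each $h_i$ with $\lambda_i\ge0$, identifies this with $\sigma_{\rm R}(v)$, and weak$^*$ separation concludes.

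The paper's route is shorter, but it delivers Clarke's subdifferential. To identify that with the convex subdifferential, Lemma~\ref{L1.2} additionally assumes that $\phi\circ h$ is convex. Your route uses only the max formula and Hahn--Banach in $(X^*,w^*)$, and it shows $f'(\bx;\cdot)=\sigma_{\rm R}$ directly.

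The one point to repair is Step~1. The sign condition concerns $\partial\phi$ only at the single point $h(\bx)$. It does not give the global coordinatewise monotonicity used in Proposition~\ref{P3.1}. For example, take $\phi(u)=(u-1)^2$, $h(x)=x^2$ on $X=\R$ and $\bx=2$. All hypotheses hold, since $\partial\phi(h(\bx))=\{6\}$, yet $\phi\circ h=(x^2-1)^2$ is not convex. Simply assuming convexity would add a hypothesis the statement does not contain.

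You never actually need convexity. Step~2 does not use it. In Step~4 you only need $\langle x^*,v\rangle\le f'(\bx;v)$ for $x^*\in\partial f(\bx)$, which follows from \eqref{cs} by taking $x:=\bx+tv$ and letting $t\downarrow0$. The equality $\sigma_{\partial f(\bx)}=f'(\bx;\cdot)$ for convex $f$ is not required. With this change, reading $\partial$ via \eqref{cs}, your argument proves the theorem exactly as stated, i.e., without the convexity assumption on $\phi\circ h$ that Lemma~\ref{L1.2} carries.
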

}\fi

\if{
\begin{proof}
Let $\lambda=(\lambda_1,\ldots,\lambda_n)\in\partial g(h(\bar{x})),\ x_i^*\in\partial h_i(\bar{x})$ $(i=1,\ldots,n)$, and define $x^*:=\sum_{i=1}^{n}\lambda_ix^*_i$. Since $ x_i^*\in\partial h_i(\bar{x})$ $(i=1,\ldots,n)$,  $h_i(x)-h_i(\bar{x})\ge \langle x_i^*,x-\bar{x}\rangle$ for all $x\in X$. Then
\begin{equation}\label{P2.4-1}
\sum_{i=1}^{n}\lambda_i(h_i(x)-h_i(\bar{x}))\ge \left\langle\sum_{i=1}^{n}\lambda_ix_i^*,x-\bar{x}\right\rangle=\langle x^*,x-\bar{x}\rangle\;\;\text{for all}\;\; x\in X.
\end{equation}
Since $\lambda\in\partial g(h(\bar{x}))$, $g(y)-g(h(\bar{x}))\ge \langle\lambda,y-h(\bar{x})\rangle$ for all $y\in\R^n$. Then
\begin{equation}\label{P2.4-2}
g(h(x))-g(h(\bar{x}))\ge\sum_{i=1}^{n}\lambda_i(h_i(x)-h_i(\bar{x}))\;\;\text{for all}\;\; x\in X.
\end{equation}
By \eqref{P2.4-1} and \eqref{P2.4-2},
\begin{equation*}
f_{\vertiii{\cdot}}(x)-f_{\vertiii{\cdot}}(\bar{x})\ge \langle x^*,x-\bar{x}\rangle\;\;\text{for all}\;\; x\in X.
\end{equation*}
Thus, $x^*\in \partial f_{\vertiii{\cdot}}(\bar{x})$. 
\end{proof}
}\fi
The following result provides a formula for the subdifferential of the objective function of problem~\eqref{P}.
\begin{proposition}\label{P4.5}
Suppose that conditions \ref{A1} and \ref{A2} are satisfied, and $\bar{x}\notin\bigcup_{i=1}^n\Omega_i$.
\if{
Then
\begin{multline}\label{P4.5-0}
\partial f(\bar x)\subset\cl^*\co\Bigg\{\sum_{i=1}^{n}\lambda_ix^*_i\mid
\|(\lambda_1,\ldots,\lambda_n)\|_{\R^n}^*\le1,\;x_i^*\in \partial d(\cdot,\Omega_i)(\bar{x})\;(i=1,\ldots,n),\\
\sum_{i=1}^{n}\lambda_id(\bar{x},\Omega_i)=\|(d(\bar{x},\Omega_1),\ldots,d(\bar{x},\Omega_n))\|_{\R^n}
\Bigg\}.
\end{multline}
}\fi
Then
\begin{multline}\label{P4.5-1}
\partial f(\bar x)=\Bigg\{\sum_{i=1}^{n}\lambda_ix^*_i\mid
\lambda_i\ge 0,\;x_i^*\in \partial d(\cdot,\Omega_i)(\bx)\;(i=1,\ldots,n),\;\|(\lambda_1,\ldots,\lambda_n)\|_{\R^n}^*=1,\\\;\;\;\;
\sum_{i=1}^{n}\lambda_id(\bar{x},\Omega_i)=\|(d(\bar{x},\Omega_1),\ldots,d(\bar{x},\Omega_n))\|_{\R^n}
\Bigg\}.
\end{multline}
\end{proposition}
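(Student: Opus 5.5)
We apply Lemma~\ref{L1.2} with $h_i:=d(\cdot,\Omega_i)$ and $\phi:=\|\cdot\|_{\R^n}$. Then we identify $\partial\phi(h(\bx))$ explicitly via \eqref{sn}. The hypotheses of Lemma~\ref{L1.2} are easy to check. The functions $h_i$ and $\phi$ are convex and globally Lipschitz, and $\phi\circ h=f$ is convex by Proposition~\ref{P3.1}. It remains to show that every $\lambda=(\lambda_1,\ldots,\lambda_n)\in\partial\phi(h(\bx))$ has nonnegative components. For this, fix $i$ and compare $h(\bx)$ with $u:=h(\bx)-t\mathbf{e}_i$ for small $t>0$. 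Because $\bx\notin\Omega_i$, we have $h_i(\bx)=d(\bx,\Omega_i)>0$ by the closedness of $\Omega_i$. Hence $|u_i|\le|h_i(\bx)|$ for $t\le h_i(\bx)$, and \ref{A2} gives $\phi(u)\le\phi(h(\bx))$. The subgradient inequality then yields $-t\lambda_i\le\phi(u)-\phi(h(\bx))\le0$, so $\lambda_i\ge0$.

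Lemma~\ref{L1.2} now gives
\[
\partial f(\bx)=\Big\{\textstyle\sum_{i=1}^n\lambda_ix_i^*\mid\lambda\in\partial\phi(h(\bx)),\ x_i^*\in\partial d(\cdot,\Omega_i)(\bx)\Big\}.
\]
Since $\bx\notin\Omega_1$, we have $h(\bx)\neq0$. Formula \eqref{sn}, applied in $\R^n$ with the norm $\phi$, gives
\[
\partial\phi(h(\bx))=\Big\{\lambda\in\R^n\mid\|\lambda\|^*_{\R^n}=1,\ \textstyle\sum_{i=1}^n\lambda_id(\bx,\Omega_i)=\|(d(\bx,\Omega_1),\ldots,d(\bx,\Omega_n))\|_{\R^n}\Big\}.
\]
Combining this with $\lambda_i\ge0$, which was shown above for every element of this set, produces exactly the right-hand side of \eqref{P4.5-1}. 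The reverse inclusion needs no separate argument: the set in \eqref{P4.5-1} is a subset of the one in Lemma~\ref{L1.2}, and the two coincide because the extra constraint $\lambda_i\ge0$ is automatically satisfied.

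The only step requiring care is the nonnegativity of the multipliers. There the assumption $\bx\notin\bigcup_{i}\Omega_i$ is used to guarantee $h_i(\bx)>0$, so that the perturbation $-t\mathbf{e}_i$ decreases $|h_i(\bx)|$ and does not push it past zero. Sign-symmetry \ref{A1} is not needed for this part, although it underlies Proposition~\ref{P3.1}. Everything else is direct substitution into Lemma~\ref{L1.2} and \eqref{sn}.
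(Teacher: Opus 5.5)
Your proof is correct and follows essentially the same route as the paper. Both arguments verify the hypotheses of Lemma~\ref{L1.2}, prove $\lambda_i\ge 0$ from \ref{A2}, the subgradient inequality and $d(\bx,\Omega_i)>0$, and then identify $\partial\phi(h(\bx))$ via \eqref{sn}. The only cosmetic difference is that the paper takes $t=h_i(\bx)$, which zeroes out the $i$-th coordinate, whereas you allow any $t\in(0,h_i(\bx)]$.
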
	

\begin{proof}
Recall that $f=\phi\circ h$ with  $h$ and $\phi$ given by \eqref{h} and \eqref{phi}, respectively.
By Proposition~\ref{P3.1}, $f$ is convex and Lipschitz continuous.
Let $\lambda:=(\lambda_1,\ldots,\lambda_n)\in\partial \phi(h(\bx))$. 
By \eqref{sn}, the latter is equivalent to $\|(\lambda_1,\ldots,\lambda_n)\|_{\R^n}^*=1$ and 
$\sum_{i=1}^{n}\lambda_id(\bar{x},\Omega_i)=\|(d(\bar{x},\Omega_1),\ldots,d(\bar{x},\Omega_n))\|_{\R^n}$.
We now show that $\lambda_1,\ldots,\lambda_n\ge 0$.
Fix $i\in\{1,\ldots,n\}$ and  let $\mathbf{e}_i$ be the standard basis vector in $\R^n$. 
By \ref{A2}, 
\begin{gather}\label{P4.5-3}
\phi(h(\bx)-h_i(\bx)\mathbf{e}_i)\leq \phi(h(\bx)).
\end{gather}	
By \eqref{cs},
$\langle \lambda,u-h(\bx)\rangle\le  \phi(u)-\phi(h(\bx))$ for all
$u\in\R^n$.
Substituting $u:=h(\bx)-h_i(\bx)\mathbf{e}_i$, we obtain
\begin{gather}\label{P4.5-4}
\langle\lambda,-h_i(\bx)\mathbf{e}_i\rangle\le \phi(h(\bx)-h_i(\bx)\mathbf{e}_i)-\phi(h(\bx)).
\end{gather}	
By \eqref{P4.5-3} and \eqref{P4.5-4},
$\langle\lambda,-h_i(\bx)\mathbf{e}_i\rangle\le 0$, and consequently, $\lambda_ih_i(\bx)\ge 0$.
 Note that $h_i(\bar{x})>0$ since $\bar{x}\notin\Omega_i$ and $\Omega_i$ is closed.
 Hence, $\lambda_i\ge 0$.
By Lemma~\ref{L1.2}, we obtain \eqref{P4.5-1}.
\end{proof}	

The next result establishes dual necessary and sufficient optimality conditions for problem~\eqref{P}.
\begin{theorem}\label{T3.5}
Suppose that conditions \ref{A1} and \ref{A2} are satisfied, and $\bar{x}\notin\bigcup_{i=1}^n\Omega_i$.
The following assertions hold.
\begin{enumerate}
\item\label{T3.5-1}
$\bar x\in \text{\rm Sol}(P,\|\cdot\|_{\R^n})$ if and only if there exist $(\lambda_1,\ldots,\lambda_n)\in\R^n_+$ and $x_i^*\in \partial d(\cdot,\Omega_i)(\bx)$ $(i=1,\ldots,n)$ such that
\begin{gather}\label{P2.5-0}
\sum_{i=1}^{n}\lambda_ix^*_i=0, \|(\lambda_1,\ldots,\lambda_n)\|_{\R^n}^*=1,
\sum_{i=1}^{n}\lambda_id(\bar{x},\Omega_i)=\|(d(\bar{x},\Omega_1),\ldots,d(\bar{x},\Omega_n))\|_{\R^n}.
\end{gather}	
\item\label{T3.5-2}
Let $(\lambda_1,\ldots,\lambda_n)\in\R^n_+$ and $x_i^*\in \partial d(\cdot,\Omega_i)(\bx)$ $(i=1,\ldots,n)$ satisfy \eqref{P2.5-0}.
Then
\begin{multline}\label{P2.5-1}
\text{\rm Sol}(P,\|\cdot\|_{\R^n})=\Bigg\{x\in X\mid
x_i^*\in \partial d(\cdot,\Omega_i)(x)\;\;\text{with}\;\;\lambda_i>0,
\\\;\;\;\;
\sum_{\lambda_i>0}\lambda_id(x,\Omega_i)=\|(d(x,\Omega_1),\ldots,d(x,\Omega_n))\|_{\R^n}
\Bigg\}.
\end{multline}
\end{enumerate}
\end{theorem}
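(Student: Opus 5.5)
Part (i) is immediate from convexity. By Proposition~\ref{P3.1}, $f$ is convex and continuous, so $\bar x\in\text{\rm Sol}(P,\|\cdot\|_{\R^n})$ if and only if $0\in\partial f(\bar x)$. Since $\bar x\notin\bigcup_{i=1}^n\Omega_i$, Proposition~\ref{P4.5} describes $\partial f(\bar x)$ exactly, and $0$ belonging to the set in \eqref{P4.5-1} is precisely the existence of $(\lambda_1,\ldots,\lambda_n)\in\R^n_+$ and $x_i^*\in\partial d(\cdot,\Omega_i)(\bar x)$ satisfying \eqref{P2.5-0}.

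For part (ii), fix the multipliers $\lambda_i$ and functionals $x_i^*$ from \eqref{P2.5-0}, and let $S$ denote the right-hand side of \eqref{P2.5-1}. We show the two inclusions separately. The tool is the linear function $g(x):=\sum_{\lambda_i>0}\lambda_i d(x,\Omega_i)$ and the chain
\[
g(x)\le \|(d(x,\Omega_1),\ldots,d(x,\Omega_n))\|_{\R^n}=f(x),
\]
which holds for every $x$. It is the H\"older-type inequality $\langle\lambda,u\rangle\le\|\lambda\|^*_{\R^n}\|u\|_{\R^n}$ with $\|\lambda\|^*=1$ and $u=h(x)$.

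For $S\subset\text{\rm Sol}$, take $x\in S$ and any $y\in X$. Each $x_i^*\in\partial d(\cdot,\Omega_i)(x)$ with $\lambda_i>0$ gives $d(y,\Omega_i)\ge d(x,\Omega_i)+\langle x_i^*,y-x\rangle$. Multiplying by $\lambda_i$ and summing over $\lambda_i>0$, the terms $\langle x_i^*,\cdot\rangle$ cancel because $\sum\lambda_ix_i^*=0$. Hence $g(y)\ge g(x)$. Combining with the chain,
\[
f(y)\ge g(y)\ge g(x)=f(x),
\]
where the last equality is the defining condition of $S$. So $x$ is a solution.

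For the converse $\text{\rm Sol}\subset S$, take $x\in\text{\rm Sol}$, so $f(x)=f(\bar x)$. At $\bar x$, condition \eqref{P2.5-0} gives $g(\bar x)=f(\bar x)$. The subgradient argument above, run at $\bar x$, gives $g(x)\ge g(\bar x)$. Therefore
\[
f(x)\ge g(x)\ge g(\bar x)=f(\bar x)=f(x),
\]
so all these quantities coincide. In particular $g(x)=f(x)$, which is the second condition of $S$.

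The main obstacle is the remaining condition $x_i^*\in\partial d(\cdot,\Omega_i)(x)$ for each $i$ with $\lambda_i>0$. From $g(x)=g(\bar x)$ and the individual inequalities $\lambda_i d(x,\Omega_i)\ge\lambda_i d(\bar x,\Omega_i)+\lambda_i\langle x_i^*,x-\bar x\rangle$, whose sum is an equality, each inequality must itself be an equality:
\[
d(x,\Omega_i)=d(\bar x,\Omega_i)+\langle x_i^*,x-\bar x\rangle .
\]
Now take any $y\in X$ and use $x_i^*\in\partial d(\cdot,\Omega_i)(\bar x)$:
\[
d(y,\Omega_i)\ge d(\bar x,\Omega_i)+\langle x_i^*,y-\bar x\rangle = d(x,\Omega_i)+\langle x_i^*,y-x\rangle .
\]
This is exactly $x_i^*\in\partial d(\cdot,\Omega_i)(x)$. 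This is the standard fact that a subgradient valid at one point remains valid at any point where its affine minorant is exact. The argument uses only \eqref{cs}, so it needs no assumption that $x\notin\Omega_i$.
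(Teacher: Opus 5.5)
Your proof is correct and follows the paper's argument. Part (i) goes through $0\in\partial f(\bar x)$ and Proposition~\ref{P4.5}. The inclusion $\mathrm{Sol}\subset S$ uses the same squeeze $g(\bar x)\le g(x)\le f(x)=f(\bar x)=g(\bar x)$, followed by the same transfer of $x_i^*$ from $\bar x$ to $x$ once each affine minorant is exact.

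The only minor deviation is in $S\subset\mathrm{Sol}$. You check minimality directly via $f(y)\ge g(y)\ge g(x)=f(x)$, using only the subgradient inequality at $x$. The paper instead combines the subgradient inequalities at $x$ and $\bar x$ to get $g(x)=g(\bar x)$ and hence $f(x)=f(\bar x)$.

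Two small points of wording. The function $g$ is convex, not linear. Also, writing $f(x)=f(\bar x)$ for $x\in\mathrm{Sol}$ tacitly uses $\bar x\in\mathrm{Sol}$, which follows from part (i).
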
	

\begin{proof}
By the standard fact of convex analysis, $\bar x\in \text{\rm Sol}(P,\|\cdot\|_{\R^n})$ if and only if $0\in \partial f(\bar x)$.
Thus, characterisation \eqref{P2.5-0} is a consequence of 
\eqref{P4.5-1}.
This proves \eqref{T3.5-1}.

We now prove \eqref{T3.5-2}.
Denote by $\text{\rm N}$ the set in the right-hand side of \eqref{P2.5-1}.
Let $x\in\text{\rm N}$.
Then
\begin{gather*}
x_i^*\in \partial d(\cdot,\Omega_i)(x)\;\;\text{with}\;\;\lambda_i>0\;\;\text{and}\;\;
\sum_{\lambda_i>0}\lambda_id(x,\Omega_i)=\|(d(x,\Omega_1),\ldots,d(x,\Omega_n))\|_{\R^n}.
\end{gather*}	
Since $x_i^*\in\partial d(\cdot,\Omega_i)(x)\cap \partial d(\cdot,\Omega_i)(\bx)$, it follows from \eqref{cs} that
$d(x,\Omega_i)-d(\bx,\Omega_i)=\langle x_i^*,x-\bx\rangle$.
From this and the first equality in \eqref{P2.5-0},
\begin{equation*}
\sum_{\lambda_i>0}\lambda_id(x,\Omega_i)-\sum_{\lambda_i>0}\lambda_id(\bx,\Omega_i)=\left\langle\sum_{\lambda_i>0}\lambda_ix_i^*,x-\bx\right\rangle=0.
\end{equation*}
Thus, $\sum_{i=1}^{n}\lambda_id(x,\Omega_i)=\sum_{i=1}^{n}\lambda_id(\bx,\Omega_i)$, and consequently,
\begin{gather*}
\|(d(x,\Omega_1),\ldots,d(x,\Omega_n))\|_{\R^n}=\|(d(\bar{x},\Omega_1),\ldots,d(\bar{x},\Omega_n))\|_{\R^n}.
\end{gather*}	
The latter is the optimal value of \eqref{P}.
Thus, $x\in \text{\rm Sol}(P,\|\cdot\|_{\R^n})$.
  
Let $x\in\text{\rm Sol}(P,\|\cdot\|_{\R^n})$. 
Since $x_i^*\in \partial d(\cdot,\Omega_i)(\bx)$, we have
\begin{equation}\label{T3.7-1}
d(x,\Omega_i)-d(\bx,\Omega_i)\ge\langle x_i^*,x-\bx\rangle\;\;(i=1,\ldots,n).
\end{equation}
Then
\begin{equation*}
	\sum_{i=1}^{n}\lambda_id(x,\Omega_i)-\sum_{i=1}^{n}\lambda_id(\bx,\Omega_i)\ge \sum_{i=1}^{n}\langle\lambda_ix_i^*,x-\bx\rangle=\left\langle\sum_{i=1}^{n}\lambda_ix_i^*,x-\bx\right\rangle=0.
\end{equation*}
Thus,
\begin{align*}
\sum_{i=1}^{n}\lambda_id(\bx,\Omega_i)
&\le	\sum_{i=1}^{n}\lambda_id(x,\Omega_i)\\
&\le\|(\lambda_1,\ldots,\lambda_n)\|^*_{\R^n}\cdot\|(d(x,\Omega_1),\ldots,d(x,\Omega_n))\|_{\R^n}\\
&\le\|(d(x,\Omega_1),\ldots,d(x,\Omega_n))\|_{\R^n}\\
&=\|(d(\bar{x},\Omega_1),\ldots,d(\bar{x},\Omega_n))\|_{\R^n}\\
&=\sum_{i=1}^{n}\lambda_id(\bar{x},\Omega_i),
\end{align*}	
and consequently,
\begin{gather*}
\sum_{\lambda_i>0}\lambda_id(x,\Omega_i)=\|(d(x,\Omega_1),\ldots,d(x,\Omega_n))\|_{\R^n}\;\;\text{and}\;\; \sum_{\lambda_i>0}\lambda_id(x,\Omega_i)=\sum_{\lambda_i>0}\lambda_id(\bx,\Omega_i).
\end{gather*}	
By the latter and the first equality in \eqref{P2.5-0}, 
\begin{gather*}
\sum_{\lambda_i>0}\lambda_i\left(d(x,\Omega_i)-d(\bx,\Omega_i)-\langle x_i^*,x-\bx\rangle\right)=\sum_{\lambda_i>0}\lambda_id(x,\Omega_i)-\sum_{\lambda_i>0}\lambda_id(\bx,\Omega_i)-\sum_{\lambda_i>0}\langle\lambda_ix_i^*,x-\bx\rangle =0.
\end{gather*}
From this and \eqref{T3.7-1}, $d(x,\Omega_i)-d(\bx,\Omega_i)=\langle x_i^*,x-\bx\rangle$ for all $\lambda_i>0$. 
Fix $i\in\{1,\ldots,n\}$ with $\lambda_i>0$.
Let $u\in X$. 
Using the fact that $x_i^*\in\partial d(\cdot,\Omega_i)(\bx)$, we obtain
\begin{align*}
\langle x_i^*,u-x\rangle
&=\langle x_i^*,u-\bx\rangle+\langle x_i^*,\bx-x\rangle\\
&\le (d(u,\Omega_i)-d(\bx,\Omega_i))+(d(\bx,\Omega_i)-d(x,\Omega_i))
=d(u,\Omega_i)-d(x,\Omega_i).
\end{align*}
Thus, $x_i^*\in \partial d(\cdot,\Omega_i)(\bx)$.
Hence $x\in \text{\rm N}$.
This completes the proof.
\end{proof}	

\begin{remark}
In view of \eqref{P2.5-1}, a complete description of the solution set can be obtained if we know the dual vectors
$(\lambda_1,\ldots,\lambda_n)\in\mathbb{R}^n_+$ and
$x^*_1,\ldots,x^*_n$ in \eqref{P2.5-0}  corresponding to a given solution
$\bar x\notin\bigcup_{i=1}^n\Omega_i$.
\end{remark}

The following example illustrates  Theorem~\ref{T3.5} for the case $n=2$.
\begin{example}\label{E3.7}
Let $x_1,x_2\in X$, and $r>0$ with $\|x_1-x_2\|>2r$.
Define 
$\Omega_1:=\B_{r}(x_1)$, $\Omega_2:=\B_{r}(x_2)$, and  $\bar x:=\frac{x_1+x_2}{2}$.
Observe that $\Omega_1, \Omega_2$ are closed and convex, and $\bar{x}\notin\Omega_1\cup\Omega_2$.
Suppose that $\psi\in \pmb{\Psi}_2$ is permutation-symmetric, and $\phi:=\|\cdot\|_\psi$ is given by \eqref{T2.2-1}.
By Remark~\ref{R3.1}, $\phi$ satisfies conditions \ref{A1} and \ref{A2}.
Then there exist $\lambda:=\psi(\frac{1}{2},\frac{1}{2})$ and $x_1^*\in \partial d(\cdot,\Omega_1)(\bx)$, $x_2^*\in \partial d(\cdot,\Omega_2)(\bx)$ such that
\begin{gather}
 x^*_1+ x_2^*=0,\; \|(\lambda,\lambda)\|^*_{\psi}=1,\label{E3.7-4}\\
\lambda d(\bar{x},\Omega_1)+\lambda d(\bar{x},\Omega_2)=\|(d(\bar{x},\Omega_1),d(\bar{x},\Omega_2))\|_\psi.\label{E3.7-5}
\end{gather}	
By Theorem~\ref{T3.5}, $\bar x\in \text{\rm Sol}(P,\|\cdot\|_\psi)$ and
\begin{multline*}
\text{\rm Sol}(P,\|\cdot\|_\psi)=\Big\{x\in X\mid x_1^*\in  \partial d(\cdot,\Omega_1)(x),\; x_2^*\in  \partial d(\cdot,\Omega_2)(x),\\ \lambda d(x,\Omega_1)+\lambda d(x,\Omega_2)=\|(d(x,\Omega_1),d(x,\Omega_2))\|_\psi\Big\}.
\end{multline*}
\end{example}	

\begin{proof}
The objective function is given by $f(x):=\|(d(x,\Omega_1),d(x,\Omega_2))\|_\psi$ for all $x\in X$.
We first prove that 
\begin{gather}\label{E3.7-6}
d(x,\Omega_i)=\max\{\|x-x_i\|-r,0\}\;\;\text{for all}\;\;x\in X\;\;(i=1,2).
\end{gather}	
Let $i\in\{1,2\}$ and $x\in X$.
If $x\in\Omega_i$, then \eqref{E3.7-6} is obvious. 
Suppose that $x\notin\Omega_i$.
Then $\|x-x_i\|-r>0$. 
Define $z_i:=x_i+\frac{r}{\|x-x_i\|}(x-x_i)$. 
Then $\|z_i-x_i\|=r$, i.e., $z_i\in\Omega_i$, and consequently, 
\begin{gather*}
d(x,\Omega_i)\le \|x-z_i\|=\|x-x_i\|-r=\max\{\|x-x_i\|-r,0\}.
\end{gather*}	
On the other hand, for any $u\in\Omega_i$, we have $\|u-x_i\|\le r$, and consequently,
$\|x-u\|\geq \|x-x_i\|-\|u-x_i\|\geq \|x-x_i\|-r.$
Thus, $d(x,\Omega_i)=\inf_{u\in\Omega_i}\|x-u\|\ge\|x-x_i\|-r$. 
Therefore, condition \eqref{E3.7-6} is satisfied.
By the calculus rule for the maximum function \cite[Theorem 3.59]{MorNam22} and the fact
$d(\bx,\Omega_i)=\|\bx-x_i\|-r>0$, we have
$\partial d(\cdot,\Omega_i)(\bx)=\partial\|\cdot-x_i\|(\bx)$.
Note that $\bx\ne x_i$  $(i=1,2)$.
By \eqref{sn},
\begin{gather}\label{E3.7-7}
\partial d(\cdot,\Omega_i)(\bx)=\left\{x_i^*\in X^*\mid\|x_i^*\|^*=1,\;\langle x_i^*,\bx-x_i\rangle=\|\bx-x_i\|\right\}\; (i=1,2).
\end{gather}	

We now show that there exist $x_1^*\in \partial d(\cdot,\Omega_1)(\bx)$ and $x_2^*\in \partial d(\cdot,\Omega_2)(\bx)$ such that conditions \eqref{E3.7-4} and \eqref{E3.7-5} are satisfied.
Since $\bar{x}-x_1\ne 0$, 
the Hahn-Banach theorem \cite[Corollary~1.3]{Bre11} guarantees the existence of $x^*_1\in X^*$ such that $\|x^*_1\|^*=1$ and $\langle x^*_1,\bx-x_1\rangle=\|\bx-x_1\|$.
By \eqref{E3.7-7}, $x_1^*\in \partial d(\cdot,\Omega_1)(\bx)$.
Let $x_2^*:=-x_1^*$. 
Then $x^*_1+x^*_2=0$, $\|x^*_2\|^*=1$ and $$\langle x^*_2,\bx-x_2\rangle=\langle x^*_1,\bx-x_1\rangle=
\|\bx-x_1\|=\|\bx-x_2\|.$$
By \eqref{E3.7-7}, $x_2^*\in \partial d(\cdot,\Omega_2)(\bx)$.
Let $\lambda:=\psi(\frac{1}{2},\frac{1}{2})$. 
By Proposition~\ref{P2.2}\eqref{P2.2-2}, $\ds\min_{(t_1,t_2)\in\Delta_2}\psi(t_1,t_2)=\lambda$. 
By Proposition~\ref{P2.2}\eqref{P2.2-6},
\begin{equation*}
\|(\lambda,\lambda)\|^*_{\psi}=\max_{(t_1,t_2)\in\Delta_2}\frac{t_1\cdot\lambda+t_2\cdot\lambda}{\psi(t_1,t_2)}=\frac{\lambda}{\ds\min_{(t_1,t_2)\in\Delta_2}\psi(t_1,t_2)}=1.
\end{equation*}
In view of \eqref{E3.7-6}, we have $d(\bx,\Omega_1)=d(\bx,\Omega_2)$. 
From this and  \eqref{T2.2-1}, we obtain \eqref{E3.7-5}.
The conclusion is a direct consequence of Theorem~\ref{T3.5}.
\end{proof}	

\begin{remark}
When $X=\mathbb{R}^m$ is equipped with the Euclidean norm, the characterisation  \eqref{E3.7-6} and 
the  subdifferential  formula \eqref{E3.7-7} were studied in \cite[Proposition~3.1]{NamHoaAn14}.
\end{remark}	

\begin{proposition}\label{C3.8}
Let $X$ be a Hilbert space, and $\psi\in \pmb{\Psi}_n$.
Suppose that one of the sets $\Omega_1,\ldots,\Omega_n$ is bounded.
Then 
\begin{gather*}
\emptyset\ne \text{\rm Sol}(P,\|\cdot\|_\psi)\subset \co\left(\bigcup_{i=1}^{n}{\Omega}_i\right).
\end{gather*}	
\end{proposition}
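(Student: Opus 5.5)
The plan is to obtain nonemptiness from Proposition~\ref{P4.3} and the inclusion from the optimality conditions in Theorem~\ref{T3.5}, combined with the explicit form of the subdifferential of the distance function in Hilbert spaces given by Lemma~\ref{L2.1}.

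\emph{Nonemptiness.} A Hilbert space is a reflexive Banach space. By Remark~\ref{R3.1}, $\phi:=\|\cdot\|_\psi$ satisfies \ref{A1} and \ref{A2}. Since one of the $\Omega_i$ is bounded, Proposition~\ref{P4.3}\eqref{P4.3-1} gives $\text{\rm Sol}(P,\|\cdot\|_\psi)\ne\emptyset$.

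\emph{Inclusion.} Fix $\bar x\in\text{\rm Sol}(P,\|\cdot\|_\psi)$ and split into two cases.
\begin{enumerate}
\item If $\bar x\in\bigcup_{i=1}^n\Omega_i$, then trivially $\bar x\in\co\left(\bigcup_{i=1}^n\Omega_i\right)$.
\item If $\bar x\notin\bigcup_{i=1}^n\Omega_i$, then Theorem~\ref{T3.5}\eqref{T3.5-1} applies. It yields $(\lambda_1,\ldots,\lambda_n)\in\R^n_+$ and $x_i^*\in\partial d(\cdot,\Omega_i)(\bar x)$ satisfying \eqref{P2.5-0}.
\end{enumerate}

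In the second case, Lemma~\ref{L2.1} shows that $P_{\Omega_i}(\bar x)=\{p_i\}$ is a singleton. Since $\bar x\notin\Omega_i$ and $\Omega_i$ is closed, $d_i:=d(\bar x,\Omega_i)>0$, and Lemma~\ref{L2.1} also gives $x_i^*=(\bar x-p_i)/d_i$. Set $\mu_i:=\lambda_i/d_i\ge 0$. The condition $\sum_{i=1}^n\lambda_ix_i^*=0$ then becomes
\begin{gather*}
\left(\sum_{i=1}^n\mu_i\right)\bar x=\sum_{i=1}^n\mu_i p_i .
\end{gather*}

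The only point needing care is that $\sum_{i=1}^n\mu_i>0$, so that we can divide. This holds because $\|(\lambda_1,\ldots,\lambda_n)\|^*_\psi=1$ forces $\lambda\ne0$. Hence some $\lambda_i>0$, and so $\mu_i>0$ for that index. Dividing gives
\begin{gather*}
\bar x=\sum_{i=1}^n\frac{\mu_i}{\sum_{j=1}^n\mu_j}\,p_i ,
\end{gather*}
which is a convex combination of the points $p_i\in\Omega_i$. Therefore $\bar x\in\co\left(\bigcup_{i=1}^n\Omega_i\right)$.

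There is no serious obstacle. The work lies in correctly invoking the Hilbert-space identification of $X^*$ with $X$, so that $x_i^*$ is the vector $(\bar x-p_i)/d_i$, and in checking that the multipliers are not all zero.
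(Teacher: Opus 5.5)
Your proposal is correct and follows the paper's proof essentially step by step. You get nonemptiness from Proposition~\ref{P4.3}, then use Theorem~\ref{T3.5}\eqref{T3.5-1} together with Lemma~\ref{L2.1} to write $\bar x$ as a convex combination of the projections $P_{\Omega_i}(\bar x)$, and your justification that $\sum_i\mu_i>0$ via $\|(\lambda_1,\ldots,\lambda_n)\|^*_\psi=1$ fills in a step the paper only asserts.
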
	

\begin{proof}
Every Hilbert space is a  reflexive Banach space. 
By Proposition~\ref{P4.3}, $\text{\rm Sol}(P,\|\cdot\|_\psi)$ is nonempty.
Let $x\in \text{\rm Sol}(P,\|\cdot\|_\psi)$.
If ${x}\in\bigcup_{i=1}^{n}{\Omega}_i$, then ${x}\in\co\left(\bigcup_{i=1}^{n}{\Omega}_i\right)$.
Suppose that ${x}\notin\bigcup_{i=1}^{n}{\Omega}_i$. 
By Theorem~\ref{T3.5}\eqref{T3.5-1}, there exist $(\lambda_1,\ldots,\lambda_n)\in\R^n_+$ and $x_i^*\in \partial d(\cdot,\Omega_i)(x)$ $(i=1,\ldots,n)$ such that condition \eqref{P2.5-0} is satisfied with $x$ in place of $\bx$.
By Lemma~\ref{L2.1}\eqref{L2.1-1}, there exists a unique $u_i\in{\Omega}_i$ such that $d(x,\Omega_i)=\|x-u_i\|$ $(i=1,\ldots,n)$.
By Lemma~\ref{L2.1}\eqref{L2.1-2},
$x_i^*=\frac{x-u_i}{d(x,\Omega_i)}$ $(i=1,\ldots,n)$. 
Let $\mu_i:=\frac{\lambda_i}{d(x,\Omega_i)}\ge 0$ $(i=1,\ldots,n)$ and $\mu:=\sum_{i=1}^{n}\mu_i>0$. 
Then 
\begin{gather*}
0=\sum_{i=1}^{n}\lambda_ix^*_i
=\sum_{i=1}^{n}\mu_i(x-u_i)
=\mu x-\sum_{i=1}^{n}\mu_iu_i,
\end{gather*}	
and consequently,
\begin{equation*}
x=\frac{\mu_1}{\mu}u_1+\cdots+\frac{\mu_n}{\mu}u_n\in\text{\rm co}\left(\bigcup_{i=1}^{n}{\Omega}_i\right).
\end{equation*}
The proof is complete.
\end{proof}	

\begin{remark}
When $\Omega_1,\ldots,\Omega_n$ are singletons, Proposition~\ref{C3.8} recaptures \cite[Proposition~4.9]{Cuo26b}.
\end{remark}

\if{
The next example illustrates the result in Corollary~\ref{C3.8}.
\begin{example}
Let $X:=\R^2$ be equipped with the Euclidean norm $\|\cdot\|$, and $v_1,v_2,v_3$
be three distinct points. 
Consider the classical Fermat–Weber problem, i.e.,
\begin{gather*}
{\rm{minimise }}\;\;f(u):=\|v_1-u\|+\|v_2-u\|+\|v_3-u\|\;\;\text{for all}\;\;
u\in \R^2.
\end{gather*}
If none of the angles of the triangle formed by $v_1,v_2,v_3$ is greater than or equal to $120^\circ$, the unique solution lies inside the triangle. 
Otherwise, if one angle is $120^\circ$ or larger, the solution coincides with the vertex at that angle.
In either case, the solution is unique and lies within the convex hull of the three points.
\end{example}
}\fi

\section{Generalised Fermat-Torricelli Problem}\label{S4}
We study a particular case of problem \eqref{P} with $\phi:=\|\cdot\|_{\psi_1}$.
The objective function is of the form:
\begin{gather}\label{4.1}
f(x)=d(x,\Omega_1)+\cdots+d(x,\Omega_n)\;\;\text{for all}\;\;x\in X.
\end{gather}	

The following statement provides optimality conditions and a formula for the solution set of the GFTP.
\begin{theorem}\label{T2.7}
Let $\bar{x}\notin\bigcup_{i=1}^n\Omega_i$.
The following assertions hold.
\begin{enumerate}
\item\label{T2.7.1}
$\bar x\in\text{\rm Sol}(P,\|\cdot\|_{\psi_1})$ if and only if there exist $x_1^*,\ldots,x_n^*\in X^*$ such that
\begin{equation}\label{T2.7-1}
x_i^*\in \partial d(\cdot,\Omega_i)(\bx)\;\;(i=1,\ldots,n)\;\;\text{and}\;\; \sum_{i=1}^{n}x^*_i=0.
\end{equation}
\item\label{T2.7.2}
Let $x_1^*,\ldots,x^*_n\in X^*$  satisfy \eqref{T2.7-1}.
Then $\text{\rm Sol}(P,\|\cdot\|_{\psi_1})=\bigcap_{i=1}^n \text{\rm \textbf{A}}(\Omega_i,x_i^*)$
with
\begin{gather}\label{A+}
\text{\rm\textbf{A}}(\Omega_i,x^*_i):=\{x\in X\mid x^*_i\in\partial d(\cdot,\Omega_i)(x)\}.
\end{gather}
\end{enumerate}
\end{theorem}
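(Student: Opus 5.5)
The plan is to specialise Theorem~\ref{T3.5} to $\phi:=\|\cdot\|_{\psi_1}$, which satisfies \ref{A1} and \ref{A2} by Remark~\ref{R3.1}. By Remark~\ref{R2.5}, $\|\cdot\|^*_{\psi_1}=\|\cdot\|_{\psi_\infty}$, so the dual condition $\|(\lambda_1,\ldots,\lambda_n)\|^*_{\psi_1}=1$ with $\lambda_i\ge0$ reads $\max_i\lambda_i=1$. The key observation concerns the equality $\sum_i\lambda_i d(\bar x,\Omega_i)=\sum_i d(\bar x,\Omega_i)$. Since $\bar x\notin\bigcup_i\Omega_i$ and each $\Omega_i$ is closed, every $d(\bar x,\Omega_i)>0$. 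Together with $0\le\lambda_i\le1$, this equality forces $\lambda_i=1$ for all $i$. Hence the only admissible multiplier vector is $(1,\ldots,1)$, which also satisfies the norm and equality conditions.

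For \eqref{T2.7.1}, I will argue that \eqref{P2.5-0} is equivalent to the existence of $x_i^*\in\partial d(\cdot,\Omega_i)(\bar x)$ with $\sum_i x_i^*=0$. One direction is the substitution $\lambda=(1,\ldots,1)$ into \eqref{P2.5-0}. The other uses the forcing argument above to conclude $\lambda_i=1$, so that $\sum_i\lambda_ix_i^*=\sum_i x_i^*=0$. Theorem~\ref{T3.5}\eqref{T3.5-1} then gives the claim.

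For \eqref{T2.7.2}, given $x_i^*$ satisfying \eqref{T2.7-1}, the tuple with $\lambda_i=1$ satisfies \eqref{P2.5-0}, so Theorem~\ref{T3.5}\eqref{T3.5-2} applies. All $\lambda_i>0$, so its right-hand side becomes the set of $x$ with $x_i^*\in\partial d(\cdot,\Omega_i)(x)$ for every $i$ and $\sum_i d(x,\Omega_i)=\|(d(x,\Omega_1),\ldots,d(x,\Omega_n))\|_{\psi_1}$. The last condition holds trivially for every $x$, since $\|\cdot\|_{\psi_1}$ is the sum of absolute values and the distances are nonnegative. What remains is exactly $\bigcap_{i=1}^n\mathbf{A}(\Omega_i,x_i^*)$.

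There is no substantial obstacle; the only step needing care is the forcing argument $\lambda_i=1$. It relies on strict positivity of every $d(\bar x,\Omega_i)$, which is why the hypothesis $\bar x\notin\bigcup_i\Omega_i$ is essential.
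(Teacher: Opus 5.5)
Your proposal is correct and follows essentially the same route as the paper: you specialise Theorem~\ref{T3.5} to $\phi=\|\cdot\|_{\psi_1}$, read the dual-norm condition as $\max_i\lambda_i=1$, use $d(\bar x,\Omega_i)>0$ to force $\lambda_1=\cdots=\lambda_n=1$, and then obtain part~\eqref{T2.7.2} from Theorem~\ref{T3.5}\eqref{T3.5-2}. Your remark that the remaining condition $\sum_i d(x,\Omega_i)=\|(d(x,\Omega_1),\ldots,d(x,\Omega_n))\|_{\psi_1}$ holds for every $x$ just makes explicit a step the paper leaves implicit.
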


\begin{proof}
By Theorem~\ref{T3.5}\eqref{T3.5-1}, $\bar x\in \text{\rm Sol}(P,\|\cdot\|_{\psi_1})$ if and only if	 there exist $(\lambda_1,\ldots,\lambda_n)\in\R^n_+$ and $x_i^*\in  \partial d(\cdot,\Omega_i)(\bx)$ $(i=1,\ldots,n)$ such that
\begin{gather}\label{T3.5-10}
\sum_{i=1}^{n}\lambda_ix^*_i=0,\; \max_{1\le i\le n}\lambda_i=1,\;
\sum_{i=1}^{n}\lambda_id(\bar{x},\Omega_i)=\sum_{i=1}^{n}d(\bar{x},\Omega_i).
\end{gather}	
Since $d(\bx,\Omega_i)>0$ $(i=1,\ldots,n)$ and $ \max_{1\le i\le n}\lambda_i=1$, the last equality in \eqref{T3.5-10} is equivalent to  $\lambda_1=\ldots=\lambda_n=1$.
This proves \eqref{T2.7.1}.
Assertion \eqref{T2.7.2} is a direct consequence of Theorem~\ref{T3.5}\eqref{T3.5-2}.
\end{proof}		

\begin{remark}
\begin{enumerate}
\item	
A nonconvex counterpart of Theorem~\ref{T2.7}\eqref{T2.7.1} in terms of limiting normal cones and subdifferentials in Asplund spaces can be found in \cite[Theorems~4.1 \& 4.3]{MorNam11}.
\item 
When $\Omega_1,\ldots,\Omega_n$ are singletons, Theorem~\ref{T2.7} recaptures \cite[Theorem~5.1]{Cuo26b}.
\end{enumerate}
\end{remark}	

The following statement provides a characterisation of the set \eqref{A+}.
\begin{proposition}\label{P3.7}
Let $\Omega\subset X$ be a convex set, and $x^*\in\mathbb{S}^*$.
Suppose that $P_\Omega(x)\ne \emptyset$ for any $x\in X$.
Then
$
\text{\rm\textbf{A}}(\Omega,x^*)=\text{\rm H}(\Omega,x^*)+\text{\rm T}(x^*)
$
with
\begin{gather}\label{P3.7-1}
\text{\rm H}(\Omega,x^*):=\{x\in \Omega\mid x^*\in N_\Omega(x)\}\;\;\text{and}\;\;
\text{\rm T}(x^*):=\{z\in X\mid\langle x^*,z\rangle=\|z\|\}.
\end{gather}
\end{proposition}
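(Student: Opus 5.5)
We prove the two inclusions separately. Throughout, $\Omega$ is tacitly nonempty, as it must be for $P_\Omega(x)\ne\emptyset$.

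\emph{Inclusion $\text{\rm\textbf{A}}(\Omega,x^*)\subset\text{\rm H}(\Omega,x^*)+\text{\rm T}(x^*)$.} Let $x\in\text{\rm\textbf{A}}(\Omega,x^*)$, so $x^*\in\partial d(\cdot,\Omega)(x)$.

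If $x\in\Omega$, then by \eqref{sd} we have $x^*\in N_\Omega(x)\cap\B^*$. Hence $x\in\text{\rm H}(\Omega,x^*)$, and we write $x=x+0$ with $0\in\text{\rm T}(x^*)$.

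If $x\notin\Omega$, pick $u\in P_\Omega(x)$, which exists by assumption, and set $z:=x-u$. By Lemma~\ref{L3.6}\eqref{L3.6-1}, $x^*\in\partial\|\cdot-u\|(x)\cap N_\Omega(u)$. Since $z\ne0$, \eqref{sn} gives $\langle x^*,z\rangle=\|z\|$, so $z\in\text{\rm T}(x^*)$. Also $u\in\Omega$ and $x^*\in N_\Omega(u)$, so $u\in\text{\rm H}(\Omega,x^*)$. Therefore $x=u+z$ lies in the sum.

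\emph{Inclusion $\text{\rm H}(\Omega,x^*)+\text{\rm T}(x^*)\subset\text{\rm\textbf{A}}(\Omega,x^*)$.} Let $x=u+z$ with $u\in\Omega$, $x^*\in N_\Omega(u)$ and $\langle x^*,z\rangle=\|z\|$.

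If $z=0$, then $x=u\in\Omega$. Since $\|x^*\|^*=1$, we get $x^*\in N_\Omega(u)\cap\B^*=\partial d(\cdot,\Omega)(u)$ by \eqref{sd}.

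If $z\ne0$, note that $x^*\ne0$ and $\|x^*\|^*=1$, so $\langle x^*,z\rangle=\|x^*\|^*\cdot\|z\|$. Lemma~\ref{L3.6}\eqref{L3.6-2} then yields $x\notin\Omega$ and $u\in P_\Omega(x)$. Moreover $x-u=z\ne0$, $\|x^*\|^*=1$ and $\langle x^*,x-u\rangle=\|x-u\|$, so by \eqref{sn} we have $x^*\in\partial\|\cdot-u\|(x)$. Applying Lemma~\ref{L3.6}\eqref{L3.6-1} with this $u\in P_\Omega(x)$ gives $x^*\in\partial\|\cdot-u\|(x)\cap N_\Omega(u)=\partial d(\cdot,\Omega)(x)$.

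\emph{Where the care lies.} The argument is essentially a bookkeeping of Lemma~\ref{L3.6}. The one point needing attention is that Lemma~\ref{L3.6}\eqref{L3.6-1} holds for \emph{every} projection point $u\in P_\Omega(x)$. We use that freedom in both directions: in the first inclusion with an arbitrary choice of $u$, and in the second with the specific $u$ that Lemma~\ref{L3.6}\eqref{L3.6-2} certifies as a projection point. The degenerate case $z=0$ has to be routed through \eqref{sd}, which is where the hypothesis $x^*\in\mathbb{S}^*$ is used.
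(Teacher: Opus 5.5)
Your proof is correct and matches the paper's argument almost step for step. Both use the same case splits ($x\in\Omega$ versus $x\notin\Omega$ for one inclusion, $z=0$ versus $z\ne0$ for the other), handle the degenerate cases with \eqref{sd}, and handle the rest with Lemma~\ref{L3.6}\eqref{L3.6-1}--\eqref{L3.6-2}; you are slightly more explicit in the converse direction, checking $x^*\in\partial\|\cdot-u\|(x)$ via \eqref{sn} before invoking Lemma~\ref{L3.6}\eqref{L3.6-1}, a step the paper leaves implicit.
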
	

\begin{proof}
Let $\bx\in \text{\rm\textbf{A}}(\Omega,x^*)$.
If $\bx\in\Omega$, then by \eqref{sd}, $\bx\in \text{\rm H}(\Omega,x^*)$, and consequently,
$\bx=\bx+0\in \text{\rm H}(\Omega,x^*)+\text{\rm T}(x^*)$.
Suppose  that $\bx\notin\Omega$.
By Lemma~\ref{L3.6}\eqref{L3.6-1}, 
$x^* \in N_\Omega(x)$ and
$\langle x^*, \bx-x \rangle = \|\bx-x\|$ for all $x \in P_\Omega(\bx)$.
Define $z:=\bx-x$ for some  $x \in P_\Omega(\bx)$. 
Then $z\in\text{\rm T}(x^*)$, and consequently, $\bx = x + z \in \text{\rm H}(\Omega,x^*)+\text{\rm T}(x^*)$.
	
Conversely, let 
$\bx := x + z$ for some $x \in \text{\rm H}(\Omega,x^*)$ and $z \in \text{\rm T}(x^*)$.
If $z=0$, then $x^*\in \partial d(\cdot,\Omega)(\bx)$.
By \eqref{sd} and the fact that $\|x^*\|^*=1$, we have $\bx\in \text{\rm\textbf{A}}(\Omega,x^*)$.
Suppose that $z\ne 0$.
Then $\langle x^*,\bx-x\rangle=\|\bx-x\|$.
By Lemma~\ref{L3.6}\eqref{L3.6-2}, we have $\bx\notin \Omega$ and $x\in P_\Omega(\bx)$.
By Lemma~\ref{L3.6}\eqref{L3.6-1}, $x^*\in\partial d(\cdot,\Omega)(\bx)$.
Thus, $\bx\in \text{\rm\textbf{A}}(\Omega,x^*)$.
\end{proof}

\begin{remark}
By \cite[Proposition~3.67]{MorNam22},  condition $P_\Omega(x)\ne \emptyset$ for any $x\in X$ is implied by any
of the following conditions:
\begin{enumerate}
\item 
$\Omega$ is compact;
\item 
$\Omega$ is closed and $\dim X<\infty$;
\item 
$\Omega$ is closed and convex, and $X$ is a reflexive Banach space. 
\end{enumerate}	
\end{remark}	

\if{\begin{remark}
\begin{enumerate}
\item
It follows from \eqref{T3.9-1} that $x^*_i\in\partial\|\cdot\|(v_i-\bar u)$ if $\bar u\ne v_i$.  
\item 
A finite-dimensional Minkowski space version of Theorem~\ref{T2.7} can be found in \cite[Theorems~3.1 and 3.2]{MarSwaWei02}.
When $X:=\R^n$ is equipped with the Euclidean norm, Theorem~\ref{T2.7} improves \cite[Reformulation~18.4\;(i)]{BolMarSol99}.
\item 
A result similar to Theorem~\ref{T2.7}, in the setting of a sum of convex functions in finite-dimensional spaces, can be found in \cite[Lemma~3.1]{DurMic85}.
\end{enumerate}
\end{remark}	
}\fi

The following examples illustrate Theorem~\ref{T2.7}.
\begin{example}\label{E4.4}
Let $X:=\R^2$ be equipped with the norm $\|\cdot\|:=\|\cdot\|_{\psi_p}$ with $p\in[1,\infty]$, and
\begin{gather} \label{E4.4-1}
x_1:=(-2,0),\;x_2:=(2,0),\;\bar x:=(0,0),\\ \label{E4.4.2}
\Omega_1:=\B_1(x_1),\;\;\Omega_2:=\B_1(x_2).
\end{gather}	
The dual norm is $\|\cdot\|^*=\|\cdot\|_{\psi_q}$ with $q\in[1,\infty]$ satisfying $\frac{1}{p}+\frac{1}{q}=1$.
Observe that $\Omega_1, \Omega_2$ are closed and convex, and $\bar{x}\notin\Omega_1\cup\Omega_2$.
The objective function \eqref{4.1} is of the form
\begin{gather}\label{E4.4-0}
f(x)=d(x,\Omega_1)+d(x,\Omega_2)
\end{gather}	
for all $x\in\R^2.$
By Example~\ref{E3.7}, we have $\bar x\in\text{\rm Sol}(P)$.
Define $x_1^*:=(1,0)$ and $x_2^*:=(-1,0)$.
Then $x_1^*+x_2^*=0$, and
\begin{gather*}
\|x_1^*\|^*=\|x_2^*\|^*=1,\;\ang{x_1^*,\bx-x_1}=\|\bx-x_1\|=2,\;\ang{x_2^*,\bx-x_2}=\|\bx-x_2\|=2.
\end{gather*}
By \eqref{E3.7-7}, $x_1^*\in\partial d(
\cdot,\Omega_1)(\bx)$ and $x_2^*\in\partial d(
\cdot,\Omega_2)(\bx)$.
\begin{enumerate}
\item\label{E4.4+1}
Suppose that $\|\cdot\|:=\|\cdot\|_{\psi_\infty}$.
By \eqref{P3.7-1},
\begin{align}
\text{\rm H}(\Omega_1,x_1^*)&=\{(-1,u)\mid-1\le u\le1\},\;
\text{\rm T}(x_1^*)=\{(u_1,u_2)\mid u_1\ge|u_2|\},\label{E4.4.3}\\
\text{\rm H}(\Omega_2,x_2^*)&=\{(1,u)\mid-1\le u\le1\},\;
\text{\rm T}(x_2^*)=\{(u_1,u_2)\mid u_1\le-|u_2|\}.\label{E4.4.5}
\end{align}	
\begin{figure}[H]
\begin{tikzpicture}[scale=0.9]
\begin{scope}
\draw[step=1cm,gray!50,dashed] (-3.5,-2.5) grid (3.5,2.5);

\foreach \x in {-2,0,2}{
\draw (\x,0.08)--(\x,-0.08) node[anchor=north,xshift=4pt] at (\x,0) {\small $\x$};}
\foreach \y in {-2,-1,1,2}
\draw (0.08,\y)--(-0.08,\y) node[left] {\small $\y$};
\draw[-{Stealth}](-3.5,0)--(3.5,0) node[below right] {$x$};
\draw[-{Stealth}](0,-2.5)--(0,2.5) node[above left] {$y$};
			
\fill[blue, fill opacity=0.5] (-3,1) rectangle (-1,-1);
\node[blue, below] at (-2,1.6) {$\Omega_1$};
\draw[blue, thick] 	
(-3,1) rectangle (-1,-1);
			
\fill[red, fill opacity=0.5] (1,1) rectangle (3,-1);
\node[red, below] at (2,1.6) {$\Omega_2$};
\draw[red, thick] (1,1) rectangle (3,-1);
			
\draw[blue, line width=2pt] (-1,1.02)--(-1,-1.02);
\node[blue, below] at (-1,-1) {$\text{\rm H}(\Omega_1,x_1^*)$};
			
\draw[red, line width=2pt] 	
(1,1.02)--(1,-1.02);
\node[red, below] at (1,-1) {$\text{\rm H}(\Omega_2,x_2^*)$};
			
\fill(-2,0) circle (2pt);
\node[above] at (-2,0) {$x_1$};
			
\fill(2,0) circle (2pt);
\node[above] at (2,0) {$x_2$};
\end{scope}
		
\begin{scope}[shift={(8,0)}]
\draw[step=1cm,gray!50,dashed] (-3.5,-2.5) grid (3.5,2.5);

\foreach \x in {-3,-2,-1,1,2,3}{
\draw (\x,0.08)--(\x,-0.08) node[anchor=north,xshift=4pt] at (\x,0) {\small $\x$};}
\foreach \y in {-2,-1,1,2}
\draw (0.08,\y)--(-0.08,\y) node[left] {\small $\y$};
\node[below] at (0.15,-0.1) {$0$};	
\draw[-{Stealth}](-3.5,0)--(3.5,0) node[below right] {$x$};
\draw[-{Stealth}](0,-2.5)--(0,2.5) node[above left] {$y$};

\clip (-3.5,-2.5) rectangle (3.5,2.5);
	
\fill[red, fill opacity=0.5]
(-3.5,3.5)--(0,0)--(-3.5,-3.5);
\draw[red, thick] (-3.5,3.5)--(0,0)--(-3.5,-3.5);
\node[red] at (-2.5,1.5) {$\text{\rm T}(x_2^*)$};

\fill[blue, fill opacity=0.5]
(3.5,3.5)--(0,0)--(3.5,-3.5);
\draw[blue, thick] (3.5,3.5)--(0,0)--(3.5,-3.5);
\node[blue] at (2.5,1.5) {$\text{\rm T}(x_1^*)$};

\fill(0,0) circle (1pt);
\end{scope}
\end{tikzpicture}
\caption{Example \ref{E4.4}\eqref{E4.4+1}: Sets \eqref{E4.4.2}, \eqref{E4.4.3} and \eqref{E4.4.5}
}
\label{fig1}
\end{figure}
\noindent
By Proposition~\ref{P3.7},
\begin{align}
\text{\rm \textbf{A}}(\Omega_1,x_1^*)
&=\text{\rm H}(\Omega_1,x_1^*)+
\text{\rm T}(x_1^*)
=\left\{(u_1,u_2)\mid u_1\ge-1,\; |u_2|\le 2+u_1
\right\},\label{E4.4.4}\\
\text{\rm \textbf{A}}(\Omega_2,x_2^*)
&=\text{\rm H}(\Omega_2,x_2^*)+
\text{\rm T}(x_2^*)
=\left\{(u_1,u_2)\mid u_1\le1,\; |u_2|\le2-u_1
\right\}.\label{E4.4.6}
\end{align}	
By Theorem~\ref{T2.7}\eqref{T2.7.2}, 
\begin{align}\notag
\text{\rm Sol}(P)&=\text{\rm \textbf{A}}(\Omega_1,x_1^*)\cap \text{\rm \textbf{A}}(\Omega_2,x_2^*)\\
&=\left\{(u_1,u_2)\mid 
|u_1|\le1,\;|u_2|\le\min\{2+u_1,2-u_1\}
\right\}. \label{E4.4.6+}
\end{align}

\begin{figure}[H]
\begin{tikzpicture}[scale=0.9]
\begin{scope}
\draw[step=1cm,gray!50,dashed] (-3.5,-2.5) grid (3.5,2.5);

\foreach \x in {-3,-2,-1,0,1,2,3}{
\draw (\x,0.08)--(\x,-0.08) node[anchor=north,xshift=4pt] at (\x,0) {\small $\x$};}
\foreach \y in {-2,-1,1,2}
\draw (0.08,\y)--(-0.08,\y) node[left] {\small $\y$};
\draw[-{Stealth}](-3.5,0)--(3.5,0) node[below right] {$x$};
\draw[-{Stealth}](0,-2.5)--(0,2.5) node[above left] {$y$};

\clip (-3.5,-2.5) rectangle (3.5,2.5);
	
\fill[red, fill opacity=0.5]
(-3.5,3)--(-1,3)--(1,1)--(1,-1)--(-1,-3)--(-3.5,-3);
\draw[red, thick] (-3.5,3)--(-1,3)--(1,1)--(1,-1)--(-1,-3)--(-3.5,-3);
\node[red] at (-2,1.5) {$\text{\rm \textbf{A}}(\Omega_2,x_2^*)$};
	
\fill[blue, fill opacity=0.5]
(3.5,3)--(1,3)--(-1,1)--(-1,-1)--(1,-3)--(3.5,-3);
\draw[blue, thick] (3.5,3)--(1,3)--(-1,1)--(-1,-1)--(1,-3)--(3.5,-3);
\node[blue] at (2,1.5) {$\text{\rm \textbf{A}}(\Omega_1,x_1^*)$};
\end{scope}

\begin{scope}[shift={(8,0)}]
\draw[step=1cm,gray!50,dashed] (-2.5,-2.5) grid (2.5,2.5);

\foreach \x in {-1,0,1}{
\draw (\x,0.08)--(\x,-0.08) node[anchor=north,xshift=4pt] at (\x,0) {\small $\x$};}
\foreach \y in {-2,2}
\draw (0.08,\y)--(-0.08,\y) node[left] {\small $\y$};
\draw[-{Stealth}](-2.5,0)--(2.5,0) node[below right] {$x$};
\draw[-{Stealth}](0,-2.5)--(0,2.5) node[above left] {$y$};

\fill[red, fill opacity=0.6]
(0,2)--(-1,1)--(-1,-1)--(0,-2)--(1,-1)--(1,1)--(0,2);
\draw[red, thick] (0,2)--(-1,1)--(-1,-1)--(0,-2)--(1,-1)--(1,1)--(0,2);

\node[blue] at (1.5,1.5) {Sol$(P)$};
\end{scope}
\end{tikzpicture}
\caption{Example \ref{E4.4}\eqref{E4.4+1}:
Sets \eqref{E4.4.4}, \eqref{E4.4.6} and \eqref{E4.4.6+} }
\label{fig1-2}
\end{figure}

\if{
\begin{figure}[H]
\begin{tikzpicture}[scale=0.8]
\draw[step=1.5cm,gray!50,dashed] (-2.5,-3.5) grid (2.5,3.5);

\fill[red, fill opacity=0.8]
(0,3)--(-1.5,1.5)--(-1.5,0)--(-1.5,-1.5)--(0,-3)--(1.5,-1.5)--(1.5,1.5)--(0,3);
\draw[red, thick] (0,3)--(-1.5,1.5)--(-1.5,0)--(-1.5,-1.5)--(0,-3)--(1.5,-1.5)--(1.5,1.5)--(0,3);
\node at (0.2,-0.3){$0$};
\node at (1.7,-0.3){$1$};
\node at (-1.5,-0.3){$-1$};
\foreach \y in {-2,2}
\node[left] at (-0.08,1.5*\y) {$\y$};
\draw[->] (-2.5,0)--(2.5,0) node[below right] {$x$};
\draw[->] (0,-3.5)--(0,3.5) node[above left] {$y$};
\end{tikzpicture}
{\scriptsize
Example~\ref{E4.4}\eqref{E4.4-1} \hspace{4cm} 
Example~\ref{E4.4}\eqref{E4.4-3}
}
\caption{Solution sets in Example~\ref{E4.4}}
\label{fig2}
\end{figure}
}\fi

\item\label{E4.4-3}
Suppose that $\|\cdot\|:=\|\cdot\|_{\psi_p}$ with $p\in[1,\infty)$.
By \eqref{P3.7-1},
\begin{align}
\text{\rm H}(\Omega_1,x_1^*)&=\{(-1,0)\},\;
\text{\rm T}(x_1^*)=\{(u,0)\mid u\ge0\},\label{E4.4.7}\\
\text{\rm H}(\Omega_2,x_2^*)&=\{(1,0)\},\;
\text{\rm T}(x_2^*)=\{(u,0)\mid u\le0\}.\label{E4.4.9}
\end{align}	

\begin{figure}[H]
\begin{tikzpicture}[scale=0.9]
\begin{scope}
\draw[step=1cm,gray!50,dashed] (-3.5,-2.5) grid (3.5,2.5);

\draw[-{Stealth}](-3.5,0)--(3.5,0) node[below right] {$x$};
\draw[-{Stealth}](0,-2.5)--(0,2.5) node[above left] {$y$};
\foreach \x in {-2,0,2}{
\draw (\x,0.08)--(\x,-0.08) node[anchor=north,xshift=4pt] at (\x,0) {\small $\x$};}
\foreach \y in {-2,-1,1,2}
\draw (0.08,\y)--(-0.08,\y) node[left] {\small $\y$};

\fill(-2,0) circle (2pt);
\node[above] at (-2,0) {$x_1$};

\fill(2,0) circle (2pt);
\node[above] at (2,0) {$x_2$};

\node[blue, below] at (-2,1.6) {$\Omega_1$};


\fill[blue, fill opacity=0.5]
plot[domain=-3:-1,samples=200]
(\x,{(1-abs(\x+2)^3)^(1/3)})
--
plot[domain=-1:-3,samples=200]
(\x,{-(1-abs(\x+2)^3)^(1/3)})
--cycle;

\draw[blue, thick]
plot[domain=-3:-1,samples=200]
(\x,{(1-abs(\x+2)^3)^(1/3)})
--
plot[domain=-1:-3,samples=200]
(\x,{-(1-abs(\x+2)^3)^(1/3)})
--cycle;

\node[red, below] at (2,1.6) {$\Omega_2$};
			

\fill[red, fill opacity=0.5]
plot[domain=1:3,samples=200]
(\x,{(1-abs(\x-2)^3)^(1/3)})
--
plot[domain=3:1,samples=200]
(\x,{-(1-abs(\x-2)^3)^(1/3)})
--cycle;

\draw[red, thick]
plot[domain=1:3,samples=200]
(\x,{(1-abs(\x-2)^3)^(1/3)})
--
plot[domain=3:1,samples=200]
(\x,{-(1-abs(\x-2)^3)^(1/3)})
--cycle;
\end{scope}

\begin{scope}[shift={(8,0)}]
\draw[step=1cm,gray!50,dashed] (-3.5,-2.5) grid (3.5,2.5);

\foreach \x in {-1,0,1}{
\draw (\x,0.08)--(\x,-0.08) node[anchor=north,xshift=4pt] at (\x,0) {\small $\x$};}
\foreach \y in {-2,-1,1,2}
\draw (0.08,\y)--(-0.08,\y) node[left] {\small $\y$};
\draw[-{Stealth}](-3.5,0)--(3.5,0) node[below right] {$x$};
\draw[-{Stealth}](0,-2.5)--(0,2.5) node[above left] {$y$};

\draw[-{Stealth}, blue, very thick] (0,0)--(3.5,0);
\node[blue] at (2.2,-0.5) {$\text{\rm T}(x_1^*)$};

\draw[red, very thick] (-3.5,0)--(0,0);
\node[red] at (-2.2,-0.5) {$\text{\rm T}(x_2^*)$};

\fill[blue](-1,0) circle (3pt);
\node[blue] at (-1,0.5) {$\text{\rm H}(\Omega_1,x_1^*)$};

\fill[red](1,0) circle (3pt);
\node[red] at (1,0.5) {$\text{\rm H}(\Omega_2,x_2^*)$};
\end{scope}
\end{tikzpicture}
\caption{Example \ref{E4.4}\eqref{E4.4-3}: Sets \eqref{E4.4.2},
\eqref{E4.4.7} and \eqref{E4.4.9}}
\end{figure}
\noindent
By Proposition~\ref{P3.7},
\begin{align}
\text{\rm \textbf{A}}(\Omega_1,x_1^*)
&=\text{\rm H}(\Omega_1,x_1^*)+
\text{\rm T}(x_1^*)
=\left\{(u,0)\mid u\ge-1
\right\},\label{E4.4.8}\\
\text{\rm \textbf{A}}(\Omega_2,x_2^*)
&=\text{\rm H}(\Omega_2,x_2^*)+
\text{\rm T}(x_2^*)
=\left\{(u,0)\mid u\le1
\right\}.\label{E4.4.10}
\end{align}	
By Theorem~\ref{T2.7}\eqref{T2.7.2}, 
\begin{align}\label{E4.4.11}
\text{\rm Sol}(P)=\text{\rm \textbf{A}}(\Omega_1,x_1^*)\cap \text{\rm \textbf{A}}(\Omega_2,x_2^*)
=\left\{(u,0)\mid 
-1\le u\le 1
\right\}.
\end{align}

\begin{figure}[H]
\begin{tikzpicture}[scale=0.9]
\begin{scope}
\draw[step=1cm,gray!50,dashed] (-3.5,-2.5) grid (3.5,2.5);
	
\foreach \x in {-2,-1,0,1,2}{
\draw (\x,0.08)--(\x,-0.08) node[anchor=north,xshift=4pt] at (\x,0) {\small $\x$};}
\foreach \y in {-2,-1,1,2}
\draw (0.08,\y)--(-0.08,\y) node[left] {\small $\y$};
\draw[-{Stealth}] (-3.5,0)--(3.5,0) node[below right] {$x$};
\draw[-{Stealth}] (0,-2.5)--(0,2.5) node[above left] {$y$};

\draw[-{Stealth}, blue, very thick] (-1,0)--(3.5,0);
\draw[red, very thick] (-3.5,0)--(1,0);
\draw[violet, very thick] (-1,0)--(1,0);

\node[red] at (-1.5,0.5) {$\text{\rm \textbf{A}}(\Omega_2,x_2^*)$};
	
\node[blue] at (1.5,0.5) {$\text{\rm \textbf{A}}(\Omega_1,x_1^*)$};
\end{scope}

\begin{scope}[shift={(8,0)}]
\draw[step=1cm,gray!50,dashed] (-2.5,-2.5) grid (2.5,2.5);
	
\draw[-{Stealth}] (-2.5,0)--(2.5,0) node[below right] {$x$};
\draw[-{Stealth}] (0,-2.5)--(0,2.5) node[above left] {$y$};
\foreach \x in {-2,-1,0,1,2}{
\draw (\x,0.08)--(\x,-0.08) node[anchor=north,xshift=4pt] at (\x,0) {\small $\x$};}
\foreach \y in {-2,-1,1,2}
\draw (0.08,\y)--(-0.08,\y) node[left] {\small $\y$};
	
\fill[red, very thick]
(-1,0)--(1,0);
\draw[red, very thick] (-1,0)--(1,0);
	
\fill[red](-1,0) circle (2pt);
\fill[red](1,0) circle (2pt);

\node[blue] at (0.7,0.5) {Sol$(P)$};
\end{scope}
\end{tikzpicture}
\caption{Example \ref{E4.4}\eqref{E4.4-3}: Sets
\eqref{E4.4.8}, \eqref{E4.4.10} and \eqref{E4.4.11}}
\end{figure}
\end{enumerate}
\end{example}

\begin{example}\label{E4.3}
Let $X:=L^2([0,1])$.
It is a Hilbert space  with the inner product	
\begin{gather*}
\ang{x,y}:=\displaystyle\int_{0}^1 x(t)y(t)dt\;\;\text{for all}\;\;
x,y\in X.
\end{gather*}	
Define
\begin{gather}\label{E5.4-1}
x_1(t):=t\;\;\text{for all}\;\;t\in[0,1],\;\;x_2:=-x_1,\;\;\bar x:=0,\\
\Omega_1:=\B_{1/2}(x_1),\;\;
\Omega_2:=\B_{1/2}(x_2)
\label{E5.4-2}.
\end{gather}	
Then $\Omega_1, \Omega_2$ closed and convex, and $\bar{x}\notin\Omega_1\cup\Omega_2$.
The objective function is of the form \eqref{E4.4-0} for all $x\in X$.
By Example~\ref{E3.7}, $\bar x\in\text{\rm Sol}(P)$.
Define $x_1^*:=-\sqrt{3}x_1$ and $x_2^*:=-\sqrt{3}x_2$.
Then $x_1^*+x_2^*=0$ and
\begin{gather*}
\|x_1^*\|^*=\|x_2^*\|^*=1,\;\ang{x_1^*,\bx-x_1}=\|\bx-x_1\|={1}/{\sqrt3},\;\ang{x_2^*,\bx-x_2}=\|\bx-x_2\|={1}/{\sqrt3}.
\end{gather*}
By \eqref{E3.7-7}, $x_1^*\in\partial d(
\cdot,\Omega_1)(\bx)$ and $x_2^*\in\partial d(
\cdot,\Omega_2)(\bx)$.
Fix $i\in\{1,2\}$. By \eqref{P3.7-1},
\begin{gather}\notag
\text{\rm T}(x_i^*)=\{z\in X\mid\langle x_i^*,z\rangle=\|z\|\}
=\{\gamma x_i\mid\gamma\le 0\},
\\
\text{\rm H}(\Omega_i, x_i^*)=\{\omega\in\Omega_i\mid x_i^*\in N_{\Omega_i}(\omega)\}
=\{\omega\in\Omega_i\mid\langle x_i, u_i-\omega\rangle\ge 0\;\;\text{for all}\;\; u\in\Omega_i\}
\label{E5.4-6}.
\end{gather}
Consider the  function $u\mapsto\langle x_i,u\rangle$ on $\Omega_i$ and let $\omega_i:=\left(1-\frac{\sqrt{3}}{2}\right)x_i$.
Observe that $\omega_i\in\Omega_i$ since $\|\omega_i-x_i\|=\frac{1}{2}$.
On the other hand, 
\begin{align}\label{E5.4-9}
	\langle x_i,u\rangle=\langle x_i,x_i\rangle+\langle x_i,u-x_i\rangle\ge\frac{1}{3}-\|x_i\|\cdot\|u-x_i\|\ge\frac{1}{3}-\frac{1}{2\sqrt{3}}
\end{align}
for all $u\in \Omega_i$, and
$\langle x_i,\omega_i\rangle=\frac{1}{3}-\frac{1}{2\sqrt{3}}$. Thus, $\omega_i$ is a minimiser.
Let $u_i\in\Omega_i$ be any minimiser.
By \eqref{E5.4-9}, 
$\|u_i-x_i\|=\frac{1}{2}$ and $u_i-x_i=\lambda x_i$ for some $\lambda\le 0$.
Thus, $|\lambda|\cdot\|x_i\|=\frac{1}{2}$, and consequently, $\lambda=-\frac{\sqrt{3}}{2}$.
Then $u_i=\omega_i$.
By \eqref{E5.4-6}, any element of $\text{\rm H}(\Omega_i, x_i^*)$ is a minimiser of the function $\langle x_i,\cdot\rangle$ on $\Omega_i$, and consequently,
$\text{\rm H}(\Omega_i, x_i^*)=\left\{\left(1-\frac{\sqrt{3}}{2}\right)x_i\right\}$.
By Proposition~\ref{P3.7},
\begin{align}
\text{\rm \textbf{A}}(\Omega_1,x_1^*)
&=\text{\rm H}(\Omega_1,x_1^*)+
\text{\rm T}(x_1^*)
=\left\{\gamma x_1\mid \gamma\le 1-\sqrt 3/2\right\},\label{E5.4-7}\\
\text{\rm \textbf{A}}(\Omega_2,x_2^*)
&=\text{\rm H}(\Omega_2,x_2^*)+
\text{\rm T}(x_2^*)
=\left\{\gamma x_1\mid \gamma\ge -1+ \sqrt 3/2\right\}.\label{E5.4-8}
\end{align}	
By Theorem~\ref{T2.7}\eqref{T2.7.2}, 
\begin{gather*} 
\text{\rm Sol}(P)=\text{\rm \textbf{A}}(\Omega_1,x_1^*)\cap
\text{\rm \textbf{A}}(\Omega_2,x_2^*)=\left\{\gamma x_1\mid-1+\sqrt 3/2 \le\gamma\le1-\sqrt 3/2\right\}.
\end{gather*}

\if{
By Lemma~\ref{L1.1}\eqref{L1.1-1},  there exist unique vectors $\bar x_1, \bar x_2\in X$ such that
$\|\bar x_i\|=1$, 
$\langle x_i^*,u\rangle =\langle \bar x_i,u\rangle$ for all $u\in X$ $(i=1,2)$, and
\begin{align*}
\text{\rm\textbf{A}}_{\|\cdot\|}(\Omega_1,x_1^*)&=\left\{\omega_1+\gamma\cdot\bar x_1\mid \omega_1\in \Omega_1,\;\gamma\ge 0,\;\langle \bar x_1,w_1-\omega_1\rangle\le 0\;\;\text{for all}\;\;w_1\in\Omega_1\right\},\\
\text{\rm\textbf{A}}_{\|\cdot\|}(\Omega_2,x_2^*)&=\left\{\omega_2+\gamma\cdot\bar x_2\mid \omega_2\in \Omega_2,\;\gamma\ge 0,\;\langle \bar x_2,w_2-\omega_2\rangle\le 0\;\;\text{for all}\;\;w_2\in\Omega_2\right\}.
\end{align*}
By \eqref{E3.9-1} and \eqref{E5.4-3}, we have
$\langle x_i^*,\bx-x_i\rangle=\|\bx-x_i\|$ $(i=1,2)$. 
Hence, $\langle \bar x_i,\bx-x_i\rangle=\|\bx-x_i\|$ $(i=1,2)$.
Since $\|\bar x_1\|=\|\bar x_2\|=1$, there exist scalars $\gamma_1, \gamma_2\ge 0$ such that 
$\bar x_1=\gamma_1(\bx-x_1)$ and $\bar x_2=\gamma_2(\bx-x_2)$. 
Using again the fact that $\|\bar x_1\|=\|\bar x_2\|=1$, we obtain $\gamma_1=\gamma_2=\sqrt{3}$. Then $\bar x_1=-\sqrt{3}x_1$, $\bar x_2=\sqrt{3}x_1$, and
\begin{gather}
\text{\rm \textbf{A}}_{\|\cdot\|}(\Omega_1,x_1^*)
=\left\{\omega_1-\gamma\cdot x_1\mid \omega_1\in \Omega_1,\;\gamma\ge 0,\;\langle x_1,w_1-\omega_1\rangle\ge 0\;\;\text{for all}\;\;w_1\in\Omega_1\right\},\label{E5.4-4}\\
\text{\rm \textbf{A}}_{\|\cdot\|}(\Omega_2,x_2^*)
=\left\{\omega_2+\gamma\cdot x_1\mid\omega_2\in \Omega_2,\;\gamma\ge 0,\;\langle x_1,w_2-\omega_2\rangle\le 0\;\;\text{for all}\;\;w_2\in\Omega_2\right\}.\label{E5.4-5}
\end{gather}
Define $M_1:=\left\{\omega_1\in\Omega_1\mid\langle x_1,w_1-\omega_1\rangle\ge 0\;\;\text{for all}\;\;w_1\in\Omega_1\right\}$.
Let $\omega_1\in M_1$. Observe that $\omega_1$ is a minimiser of the function $w_1\mapsto\langle x_1,w_1\rangle$ on $\Omega_1$. We have
\begin{align*}
\langle x_1,w_1\rangle=\langle x_1,x_1\rangle+\langle x_1,w_1-x_1\rangle\ge\frac{1}{3}-\|x_1\|\cdot\|w_1-x_1\|\ge\frac{1}{3}-\frac{1}{2\sqrt{3}},
\end{align*}
for all $w_1\in\Omega_1$. 
Then 
$M_1=\left\{\left(1-\frac{\sqrt{3}}{2}\right)x_1\right\}$. By \eqref{E5.4-4}, 
$\text{\rm \textbf{A}}_{\|\cdot\|}(\Omega_1,x_1^*)
=\left\{\gamma\cdot x_1\mid \gamma\le 1-\frac{\sqrt{3}}{2}\right\}$.
By a similar argument, we obtain
$\text{\rm \textbf{A}}_{\|\cdot\|}(\Omega_2,x_2^*)
=\left\{\gamma\cdot x_1\mid \gamma\ge -1+\frac{\sqrt{3}}{2}\right\}$.
It follows from \eqref{T2.7-2} that
$\text{\rm Sol}(P)=\text{\rm \textbf{A}}_{\|\cdot\|}(\Omega_1,x_1^*)\cap
\text{\rm \textbf{A}}_{\|\cdot\|}(\Omega_2,x_2^*)=\left\{\gamma\cdot x_1\mid-1+\frac{\sqrt{3}}{2}\le\gamma\le1-\frac{\sqrt{3}}{2}\right\}$.
}\fi
\end{example}	

\section{Generalised Chebyshev Centre Problem}\label{S5}
We study a particular case of problem \eqref{P} with $\phi:=\|\cdot\|_{\psi_\infty}$.
The objective function is of the form:
\begin{gather}\label{5.1}
f(x)=\max\{d(x,\Omega_1),\ldots,d(x,\Omega_n)\}\;\;\text{for all}\;\;x\in X.
\end{gather}	
\if{
Define the active index set:
\begin{gather*}
	I(u):=\Big\{i\in\{1,\ldots,n\}\mid \|u-v_i\|=\max_{1\le j\le n}\|u-v_j\|\Big\}\;\;\text{for all}\;\;u\in X.
\end{gather*}	
}\fi

The following statement provides optimality conditions and a formula for the solution set of the GCCP.
\begin{theorem}\label{T5.1}
Let $\bar{x}\notin\bigcup_{i=1}^n\Omega_i$.
The following assertions hold.
\begin{enumerate}
\item\label{T5.1-1}
$\bar x\in \text{\rm Sol}(P,\|\cdot\|_{\psi_\infty})$ if and only if
there exist $(\lambda_1,\ldots,\lambda_n)\in\R^n_+$ and $x_i^*\in \partial d(\cdot,\Omega_i)(\bx)$ $(i=1,\ldots,n)$ such that
\begin{gather}\label{T5.1-6}
\sum_{i=1}^{n}\lambda_ix^*_i=0,\; 
\sum_{i=1}^{n}\lambda_i=1,\;
\left(d(\bx,\Omega_i)-\max_{1\le j\le n}d(\bx,\Omega_j)\right)\cdot\lambda_i=0\;\;(i=1,\ldots,n).
\end{gather}	
\item\label{T5.1-2}
Let $(\lambda_1,\ldots,\lambda_n)\in\R^n_+$ and $x_i^*\in \partial d(\cdot,\Omega_i)(\bx)$ $(i=1,\ldots,n)$ satisfy \eqref{T5.1-6}.
Then
\begin{gather*}
\text{\rm Sol}(P,\|\cdot\|_{\psi_\infty})=\bigcap_{\lambda_i>0}\text{\rm\textbf{B}}(\Omega_i,x_i^*),
\end{gather*}
where $\text{\rm \textbf{B}}(\Omega_i,x_i^*):=\text{\rm \textbf{A}}(\Omega_i,x_i^*)\cap \text{\rm\textbf{C}}(\Omega_i)$
with $\text{\rm \textbf{A}}(\Omega_i,x_i^*)$
given by \eqref{A+} and
\begin{gather}\label{T5.1-3}
\text{\rm\textbf{C}}(\Omega_i):=\left\{x\in X\mid d(x,\Omega_i)=\max_{1\le j\le n}d(x,\Omega_j)\right\}.
\end{gather}
\end{enumerate}
\end{theorem}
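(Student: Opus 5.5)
The plan is to derive both assertions as the specialisation $\phi:=\|\cdot\|_{\psi_\infty}$ of Theorem~\ref{T3.5}, in the same way Theorem~\ref{T2.7} was obtained for $\psi_1$. By Remark~\ref{R3.1}, this $\phi$ satisfies \ref{A1} and \ref{A2}, so Theorem~\ref{T3.5} applies. By Remark~\ref{R2.5}, $\psi_\infty^*=\psi_1$, so the dual norm is $\|\cdot\|^*_{\psi_\infty}=\|\cdot\|_{\psi_1}$. For $(\lambda_1,\ldots,\lambda_n)\in\R^n_+$ the normalisation $\|(\lambda_1,\ldots,\lambda_n)\|^*=1$ therefore reads $\sum_{i=1}^n\lambda_i=1$.

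For assertion \eqref{T5.1-1}, write $M:=\max_j d(\bx,\Omega_j)$. I will show that, when $\lambda\in\R^n_+$ and $\sum_i\lambda_i=1$, the last condition in \eqref{P2.5-0}, namely $\sum_i\lambda_i d(\bx,\Omega_i)=M$, is equivalent to the complementarity conditions in \eqref{T5.1-6}. This follows from
\[
\sum_i\lambda_i d(\bx,\Omega_i)-M=\sum_i\lambda_i\bigl(d(\bx,\Omega_i)-M\bigr).
\]
Every summand on the right is $\le 0$, so the sum vanishes iff each term $\lambda_i(d(\bx,\Omega_i)-M)$ is $0$. Substituting this equivalence into Theorem~\ref{T3.5}\eqref{T3.5-1} gives \eqref{T5.1-1}.

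For assertion \eqref{T5.1-2}, Theorem~\ref{T3.5}\eqref{T3.5-2} gives $\text{\rm Sol}=\{x\mid x_i^*\in\partial d(\cdot,\Omega_i)(x)\ \text{for}\ \lambda_i>0,\ \sum_{\lambda_i>0}\lambda_i d(x,\Omega_i)=\max_j d(x,\Omega_j)\}$. The first family of conditions says exactly that $x\in\bigcap_{\lambda_i>0}\text{\rm\textbf{A}}(\Omega_i,x_i^*)$. For the second, I reuse the argument of the previous paragraph at the point $x$ instead of $\bx$. Since $\lambda\in\R^n_+$ and $\sum_{\lambda_i>0}\lambda_i=1$, we have
\[
\sum_{\lambda_i>0}\lambda_i d(x,\Omega_i)-\max_j d(x,\Omega_j)=\sum_{\lambda_i>0}\lambda_i\bigl(d(x,\Omega_i)-\max_j d(x,\Omega_j)\bigr),
\]
a sum of nonpositive terms with positive weights. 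It vanishes iff $d(x,\Omega_i)=\max_j d(x,\Omega_j)$ for every $i$ with $\lambda_i>0$, i.e.\ iff $x\in\bigcap_{\lambda_i>0}\text{\rm\textbf{C}}(\Omega_i)$. Combining the two descriptions yields $\bigcap_{\lambda_i>0}\bigl(\text{\rm\textbf{A}}(\Omega_i,x_i^*)\cap\text{\rm\textbf{C}}(\Omega_i)\bigr)=\bigcap_{\lambda_i>0}\text{\rm\textbf{B}}(\Omega_i,x_i^*)$.

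I expect no real obstacle. The main points of care are, first, identifying the dual norm through $\psi_\infty^*=\psi_1$, which is already recorded in Remark~\ref{R2.5}. Second, the index set $\{i\mid\lambda_i>0\}$ is nonempty, because $\sum_i\lambda_i=1$, so the intersection in \eqref{T5.1-2} is taken over a nonempty family.
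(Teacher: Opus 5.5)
Your proposal is correct and follows essentially the same route as the paper: you specialise Theorem~\ref{T3.5} to $\phi=\|\cdot\|_{\psi_\infty}$, use that the dual norm is $\|\cdot\|_{\psi_1}$ so the normalisation becomes $\sum_i\lambda_i=1$, and turn the value condition into the active-index conditions at $\bar x$ and at $x$. Your nonpositive-sum argument supplies the justification for the equivalences that the paper only asserts.
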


\begin{proof}
By Theorem~\ref{T3.5}\eqref{T3.5-1}, $\bar x\in \text{\rm Sol}(P,\|\cdot\|_{\psi_\infty})$ if and only if
there exist $(\lambda_1,\ldots,\lambda_n)\in\R^n_+$ and $x_i^*\in  \partial d(\cdot,\Omega_i)(\bx)$ $(i=1,\ldots,n)$ such that
\begin{gather}\label{T5.1-10}
\sum_{i=1}^{n}\lambda_ix^*_i=0,\; \sum_{i=1}^{n}\lambda_i=1 ,\;
\sum_{i=1}^{n}\lambda_id(\bar{x},\Omega_i)=\max_{1\le i\le n}d(\bar{x},\Omega_i).
\end{gather}	
Since $\sum_{i=1}^{n}\lambda_i=1$, the last condition in \eqref{T5.1-10} is equivalent to the last condition in \eqref{T5.1-6}.
This proves \eqref{T5.1-1}.	
By Theorem~\ref{T3.5}\eqref{T3.5-2}, a point $x\in\text{\rm Sol}(P,\|\cdot\|_{\psi_\infty})$ if and only if
\begin{gather}\label{T5.1-4}
x_i^*\in \partial d(\cdot,\Omega_i)(x)\;\;\text{with}\;\;\lambda_i>0\;\;\text{and}\;\;
\sum_{\lambda_i>0}\lambda_id(x,\Omega_i)=\max_{1\le j\le n}d(x,\Omega_j).
\end{gather}	
Since $\sum_{\lambda_i>0}\lambda_i=1$, the equality in \eqref{T5.1-4} is equivalent to $d(x,\Omega_i)=\max_{1\le j\le n}d(x,\Omega_j)$ with $\lambda_i>0$.
This proves \eqref{T5.1-2}.
\end{proof}

\if{
\begin{remark}\label{R5.2}
To the best of our knowledge, although the dual characterisations of the Chebyshev centre in Theorem~\ref{T5.1} follow directly from standard tools of convex and functional analysis, they have not been explicitly formulated in the literature.
In particular, the explicit description of the solution set in \eqref{T2.9-3} provides a  way to obtain all solutions from any given one together with its associated multipliers and dual vectors.
\end{remark}	
}\fi

\begin{remark}
\begin{enumerate}
\item 
An implicit form of  Theorem~\ref{T5.1}\eqref{T5.1-1} can be found in \cite[Proposition~3.10]{NamNguSal12}.
\item 
When $\Omega_1,\ldots,\Omega_n$  are singletons, Theorem~\ref{T5.1} reduces to \cite[Theorem~6.1]{Cuo26b}. 
In this case, the set \eqref{T5.1-3} coincides with the farthest Voronoi cell associated with the corresponding point; see \cite{GobMarTod19}. 
Algorithms for computing \eqref{T5.1-3} in the case of polygons and algebraic sets can be found  in \cite{Che11,Rie03}.
\end{enumerate}
\end{remark}

\begin{proposition}\label{P5.3}
Let  $\Omega_1,\ldots,\Omega_n\subset X$ be nonempty,
and  $\text{\rm\textbf{C}}(\Omega_i)$ $(i=1,\ldots,n)$ be given by \eqref{T5.1-3}.
The following assertions hold:
\begin{enumerate}
\item\label{P5.3-1}
$\text{\rm\textbf{C}}(\Omega_i)$ $(i=1,\ldots,n)$ are closed, and $\bigcup_{i=1}^{n}\text{\rm\textbf{C}}(\Omega_i)=X$;
\item\label{P5.3-2}
if $X$ is a Hilbert space, and $\Omega_1,\ldots,\Omega_n$ are closed and convex, then
\begin{gather*}
\text{\rm\textbf{C}}(\Omega_i)
=\bigcap_{j=1}^{n}\left\{x\in X\mid\ang{x, P_{\Omega_i}(x)-P_{\Omega_j}(x)} \le\frac{\|P_{\Omega_i}(x)\|^2-\|P_{\Omega_j}(x)\|^2}{2}\right\}
\end{gather*}
for all $i=1,\ldots,n$.
\end{enumerate}
\end{proposition}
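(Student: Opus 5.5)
For assertion (i), I will use that each distance function $d(\cdot,\Omega_j)$ is $1$-Lipschitz, hence continuous, whenever $\Omega_j$ is nonempty. Consequently $g:=\max_{1\le j\le n}d(\cdot,\Omega_j)$ is continuous. Then $\text{\rm\textbf{C}}(\Omega_i)=\{x\in X\mid d(x,\Omega_i)-g(x)=0\}$ is the preimage of $\{0\}$ under a continuous function, so it is closed. For the covering property, fix $x\in X$. The maximum of the finitely many real numbers $d(x,\Omega_1),\ldots,d(x,\Omega_n)$ is attained at some index $i$, and then $x\in\text{\rm\textbf{C}}(\Omega_i)$. Hence $X=\bigcup_{i=1}^n\text{\rm\textbf{C}}(\Omega_i)$.

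For assertion (ii), I will rewrite the defining condition using projections. Since $X$ is Hilbert and the $\Omega_j$ are closed and convex, Lemma~\ref{L2.1}\eqref{L2.1-1} says each $P_{\Omega_j}(x)$ is a singleton, and $d(x,\Omega_j)=\|x-P_{\Omega_j}(x)\|$. The equality $d(x,\Omega_i)=\max_j d(x,\Omega_j)$ is equivalent to $d(x,\Omega_i)\ge d(x,\Omega_j)$ for all $j=1,\ldots,n$. Because both sides are nonnegative, this is in turn equivalent to $\|x-P_{\Omega_i}(x)\|^2\ge\|x-P_{\Omega_j}(x)\|^2$ for all $j$.

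Next I will expand both squares using the inner product:
\[
\|x\|^2-2\langle x,P_{\Omega_i}(x)\rangle+\|P_{\Omega_i}(x)\|^2\ge\|x\|^2-2\langle x,P_{\Omega_j}(x)\rangle+\|P_{\Omega_j}(x)\|^2 .
\]
Cancelling $\|x\|^2$ and rearranging, this becomes
\[
2\langle x,P_{\Omega_i}(x)-P_{\Omega_j}(x)\rangle\le\|P_{\Omega_i}(x)\|^2-\|P_{\Omega_j}(x)\|^2 ,
\]
which is exactly the stated inequality. Intersecting over $j$ then gives the formula; the term with $j=i$ is trivially satisfied.

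I expect no real obstacle here. The only care needed is to state explicitly that the squared comparison is legitimate because distances are nonnegative. I also need to note that the sets in the intersection depend on $x$ through the projections, so the formula is a pointwise characterisation rather than an intersection of half-spaces.
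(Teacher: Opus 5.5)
Your proposal is correct and follows essentially the same route as the paper. For (i) you show closedness via the continuous function $d(\cdot,\Omega_i)-\max_{j} d(\cdot,\Omega_j)$ and get the covering from an index attaining the maximum; for (ii) you rewrite the condition through the unique projections and then square and expand the norms. Your explicit remark that squaring is legitimate because distances are nonnegative is a small point the paper leaves implicit.
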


\begin{proof}
\begin{enumerate}
\item 
Fix $i\in\{1,\ldots,n\}$ and define
\begin{gather*}
\varphi_i(x):=d(x,\Omega_i)-\max_{1\le j\le n}d(x,\Omega_j)\;\;\text{for all}\;\;x\in X.
\end{gather*}	
Then $\varphi_i$ is continuous and $\text{\rm\textbf{C}}(\Omega_i)=\varphi_i^{-1}(0)$.
Thus, $\text{\rm\textbf{C}}(\Omega_i)$ is closed. 
For every $x\in X$, we have $\varphi_{i_0}(x)=0$ for some $i_0\in\{1,\ldots,n\}$, and consequently,  $x\in\text{\rm\textbf{C}}(\Omega_{i_0})$. 
This proves the equality.
\item 
Let $i\in\{1,\ldots,n\}$. By \eqref{T5.1-3} and Lemma~\ref{L2.1}\eqref{L2.1-1},
\begin{align*}
\text{\rm\textbf{C}}(\Omega_i)
&=\bigcap_{j=1}^{n}\left\{x\in X\mid d(x,\Omega_i)\ge d(x,\Omega_j)\right\}\\
&=\bigcap_{j=1}^{n}\left\{x\in X\mid \|x-P_{\Omega_i}(x)\|\ge \|x-P_{\Omega_j}(x)\|\right\}\\
&=\bigcap_{j=1}^{n}\left\{x\in X\mid -2\ang{x,P_{\Omega_i}(x)}+\|P_{\Omega_i}(x)\|^2\ge -2\ang{x,P_{\Omega_j}(x)}+\|P_{\Omega_j}(x)\|^2\right\}\\
&=\bigcap_{j=1}^{n}\left\{x\in X\mid\ang{x, P_{\Omega_i}(x)-P_{\Omega_j}(x)} \le\frac{\|P_{\Omega_i}(x)\|^2-\|P_{\Omega_j}(x)\|^2}{2}\right\}.
\end{align*}
\end{enumerate}
The proof is complete.
\end{proof}

\begin{remark}
When $\Omega_1,\ldots,\Omega_n$ are singletons, Proposition~\ref{P5.3}\eqref{P5.3-2} recaptures the corresponding result in \cite[p.~309]{GobMarTod19}; see also \cite[Remark~6.2(ii)]{Cuo26b}.
\end{remark}	

\begin{example}\label{E5.3}
Let $\R^2$ be equipped with a norm $\|\cdot\|$,  vectors $x_1, x_2$ and sets $\Omega_1, \Omega_2$ be given by \eqref{E4.4-1} and \eqref{E4.4.2}.
\begin{enumerate}
\item\label{E5.3-2}
Suppose that $\|\cdot\|:=\|\cdot\|_{\psi_\infty}$.
By \eqref{E3.7-6} and \eqref{T5.1-3}, 
\begin{align*}
\text{\rm\textbf{C}}(\Omega_1)
&=\left\{u\in\R^2\mid d(u,\Omega_1)\ge d(u,\Omega_2)\right\}\\
&=\left\{u\in\R^2\mid\max\{\|u-x_1\|-1,0\}\ge
\max\{\|u-x_2\|-1,0\}\right\},\\
\text{\rm\textbf{C}}(\Omega_2)
&=\left\{u\in\R^2\mid d(u,\Omega_2)\ge d(u,\Omega_1)\right\}\\
&=\left\{u\in\R^2\mid\max\{\|u-x_2\|-1,0\}\ge
\max\{\|u-x_1\|-1,0\}\right\}.
\end{align*}	
Using the fact that $\max\{a,c\}\ge\max\{b,c\}$ if and only if $a\ge b$ or $c\ge b$ for all $a, b, c\in\R$, we obtain
\begin{align*}
\text{\rm\textbf{C}}(\Omega_1)
&=\left\{u\in\R^2\mid\|u-x_1\|\ge\|u-x_2\|\right\}\cup
\left\{u\in\R^2\mid\|u-x_2\|\le 1\right\}\\
&=\left\{(u_1,u_2)\mid\max\{|u_1+2|,|u_2|\}\ge\max\{|u_1-2|,|u_2|\}\right\}\\
&=\left\{(u_1,u_2)\mid u_1\ge 0\right\}\cup \left\{(u_1,u_2)\mid |u_1-2|\le|u_2|\right\},
\end{align*}
and
\begin{align*}
\text{\rm\textbf{C}}(\Omega_2)
&=\left\{u\in\R^2\mid\|u-x_2\|\ge\|u-x_1\|\right\}\cup
\left\{u\in\R^2\mid\|u-x_1\|\le 1\right\}\\
&=\left\{(u_1,u_2)\mid\max\{|u_1-2|,|u_2|\}\ge\max\{|u_1+2|,|u_2|\}\right\}\\
&=\left\{(u_1,u_2)\mid u_1\le 0\right\}\cup \left\{(u_1,u_2)\mid |u_1+2|\le|u_2|\right\}.
\end{align*}

\begin{figure}[H]
\centering
\begin{tikzpicture}[scale=0.9]
\begin{scope}
\draw[step=1cm,gray!50,dashed] (-3.5,-3.5) grid (3.5,3.5);

\foreach \x in {-3,-2,-1,0,1,2,3}{
\draw (\x,0.08)--(\x,-0.08) node[anchor=north,xshift=4pt] at (\x,0) {\small $\x$};}
\foreach \y in {-3,-2,-1,1,2,3}
\draw (0.08,\y)--(-0.08,\y) node[left] {\small $\y$};
\draw[-{Stealth}] (-3.5,0)--(3.5,0) node[below right] {$x$};
\draw[-{Stealth}] (0,-3.5)--(0,3.5) node[above left] {$y$};

\clip (-3.5,-3.5) rectangle (3.5,3.5);
\fill[red, fill opacity=0.5]
(-3.5,4)--(2,4)--(0,2)--(0,-2)--(2,-4)--(-3.5,-4);
\draw[red,  thick] (-3.5,4)--(2,4)--(0,2)--(0,-2)--(2,-4)--(-3.5,-4);
\node[red] at (-2,1.5) {$\text{\rm\textbf{C}}(\Omega_2)$};		
\fill[blue, fill opacity=0.5]
(3.5,4)--(-2,4)--(0,2)--(0,-2)--(-2,-4)--(3.5,-4);
\draw[blue,  thick] (3.5,4)--(-2,4)--(0,2)--(0,-2)--(-2,-4)--(3.5,-4);
\node[blue] at (2,1.5) {$\text{\rm\textbf{C}}(\Omega_1)$};		
\end{scope}
			
\begin{scope}[shift={(8,0)}]
\draw[step=1cm,gray!50,dashed] (-3.5,-3.5) grid (3.5,3.5);

\node[below right] at (3.5,0) {$x$};
\node[above left] at (0,3.5) {$y$};

\clip (-3.5,-3.5) rectangle (3.5,3.5);

\fill[red, fill opacity=0.5]
(-3.5,4)--(0,4)--(0,-4)--(-3.5,-4);
\draw[red,  thick] (-3.5,4)--(0,4)--(0,-4)--(-3.5,-4);
\node[red] at (-2,1.5) {$\text{\rm\textbf{C}}(\Omega_2)$};		
\fill[blue, fill opacity=0.5]
(3.5,4)--(0,4)--(0,-4)--(3.5,-4);
\draw[blue,  thick] (3.5,4)--(0,4)--(0,-4)--(3.5,-4);
\node[blue] at (2,1.5) {$\text{\rm\textbf{C}}(\Omega_1)$};	

\foreach \x in {-3,-2,-1,0,1,2,3}{
\draw (\x,0.08)--(\x,-0.08) node[anchor=north,xshift=4pt] at (\x,0) {\small $\x$};}
\foreach \y in {-3,-2,-1,1,2,3}
\draw (0.08,\y)--(-0.08,\y) node[left] {\small $\y$};
\draw[-{Stealth}] (-3.5,0)--(3.5,0);
\draw[-{Stealth}] (0,-3.5)--(0,3.5);
\end{scope}
\end{tikzpicture}
\caption{Example~\ref{E5.3}\eqref{E5.3-2} and \eqref{E5.3-1}}
\label{fig3}
\end{figure}
\item\label{E5.3-1}
Suppose that $\|\cdot\|:=\|\cdot\|_{\psi_p}$ with $p\in[1,\infty)$.
By the same argument as in assertion \eqref{E5.3-2}, 
\begin{align*}
\text{\rm\textbf{C}}(\Omega_1)
&=\left\{(u_1,u_2)\mid |u_1+2|\ge |u_1-2|\right\}
=\left\{(u_1,u_2)\mid u_1\ge 0\right\},\\
\text{\rm\textbf{C}}(\Omega_2)
&=\left\{(u_1,u_2)\mid |u_1-2|\ge |u_1+2|\right\}
=\left\{(u_1,u_2)\mid u_1\le 0\right\}.
\end{align*}
\end{enumerate}		
\end{example}	

The following examples illustrate  Theorem~\ref{T5.1}.
\begin{example}\label{E5.5}
Let $X:=\R^2$ be equipped with the norm $\|\cdot\|:=\|\cdot\|_{\psi_p}$ with $p\in[1,\infty]$, vectors $x_1, x_2, \bx$ and sets $\Omega_1, \Omega_2$ be given by \eqref{E4.4-1} and \eqref{E4.4.2}.
The objective function \eqref{5.1} is of the form
\begin{gather}\label{E5.5-0}
f(x)=\max\{d(x,\Omega_1),d(x,\Omega_2)\}
\end{gather}	
for all $x\in\R^2.$
By Example~\ref{E3.7}, we have $\bar x\in\text{\rm Sol}(P)$.
Define $x_1^*:=(1,0)$, $x_2^*:=(-1,0)$, and $\lambda_1=\lambda_2:=\frac{1}{2}$.
By Example~\ref{E4.4},  $x_1^*\in\partial d(
\cdot,\Omega_1)(\bx)$ and $x_2^*\in\partial d(
\cdot,\Omega_2)(\bx)$. 
By \eqref{E3.7-6},  $d(\bx,\Omega_1)=d(\bx,\Omega_2)$. 
Thus, condition \eqref{T5.1-6} is satisfied.
\begin{enumerate}
\item\label{E5.5-1}
Suppose that $\|\cdot\|:=\|\cdot\|_{\psi_\infty}$.
By \eqref{E4.4.4}, \eqref{E4.4.6},  and Example~\ref{E5.3}\eqref{E5.3-2},
\begin{align*}
\text{\rm \textbf{B}}(\Omega_1,x_1^*)
&=\text{\rm \textbf{A}}(\Omega_1,x_1^*)\cap \text{\rm\textbf{C}}(\Omega_1)
=\left\{(u_1,u_2)\mid u_1\ge 0,\;|u_2|\le 2+u_1\right\},\\
\text{\rm \textbf{B}}(\Omega_2,x_2^*)
&=\text{\rm \textbf{A}}(\Omega_2,x_2^*)\cap \text{\rm\textbf{C}}(\Omega_2)
=\left\{(u_1,u_2)\mid u_1\le 0,\;|u_2|\le 2-u_1\right\}.
\end{align*}	
By Theorem~\ref{T5.1}\eqref{T5.1-2}, 
$\text{\rm Sol}(P)=\text{\rm \textbf{B}}(\Omega_1,x_1^*)\cap \text{\rm \textbf{B}}(\Omega_2,x_2^*)=\left\{(0,u)\mid |u|\le 2\right\}.$
\item\label{E5.5-2}
Suppose that $\|\cdot\|:=\|\cdot\|_{\psi_p}$ with $p\in[1,\infty)$.
By \eqref{E4.4.8}, \eqref{E4.4.10},  and Example~\ref{E5.3}\eqref{E5.3-1},
\begin{align*}
\text{\rm \textbf{B}}(\Omega_1,x_1^*)
&=\text{\rm \textbf{A}}(\Omega_1,x_1^*)\cap \text{\rm\textbf{C}}(\Omega_1)
=\left\{(u,0)\mid u\ge 0\right\},\\
\text{\rm \textbf{B}}(\Omega_2,x_2^*)
&=\text{\rm \textbf{A}}(\Omega_2,x_2^*)\cap \text{\rm\textbf{C}}(\Omega_2)
=\left\{(u,0)\mid u\le 0\right\}.
\end{align*}	
By Theorem~\ref{T5.1}\eqref{T5.1-2}, $\text{\rm Sol}(P)=\text{\rm \textbf{B}}(\Omega_1,x_1^*)\cap \text{\rm \textbf{B}}(\Omega_2,x_2^*)=\{\bar x\}.$
\end{enumerate}
	
\begin{figure}[H]
\centering
\begin{tikzpicture}[scale=0.9]
\begin{scope}
\draw[step=1cm,gray!50,dashed] (-3.5,-3.5) grid (3.5,3.5);

\foreach \x in {-3,-2,-1,0,1,2,3}{
\draw (\x,0.08)--(\x,-0.08) node[anchor=north,xshift=4pt] at (\x,0) {\small $\x$};}
\foreach \y in {-3,-2,-1,1,2,3}
\draw (0.08,\y)--(-0.08,\y) node[left] {\small $\y$};
\draw[-{Stealth}] (-3.5,0)--(3.5,0) node[below right] {$x$};
\draw[-{Stealth}] (0,-3.5)--(0,3.5) node[above left] {$y$};

\clip (-3.5,-3.5) rectangle (3.5,3.5);			

\fill[red, fill opacity=0.5]
(-3.5,4)--(-2,4)--(0,2)--(0,-2)--(-2,-4)--(-3.5,-4);
\draw[red,  thick] (-3.5,4)--(-2,4)--(0,2)--(0,-2)--(-2,-4)--(-3.5,-4);
\node[red, font=\small] at (-2,1.5) {$\text{\rm \textbf{B}}(\Omega_2,x_2^*)$};					
\fill[blue, fill opacity=0.5]
(3.5,4)--(2,4)--(0,2)--(0,-2)--(2,-4)--(3.5,-4);
\draw[blue,  thick] (3.5,4)--(2,4)--(0,2)--(0,-2)--(2,-4)--(3.5,-4);
\node[blue] at (2,1.5) {$\text{\rm \textbf{B}}(\Omega_1,x_1^*)$};		
\end{scope}
			
\begin{scope}[shift={(8,0)}]
\draw[step=1cm,gray!50,dashed] (-3.5,-3.5) grid (3.5,3.5);	
\foreach \x in {-3,-2,-1,0,1,2,3}{
\draw (\x,0.08)--(\x,-0.08) node[anchor=north,xshift=4pt] at (\x,0) {\small $\x$};}
\foreach \y in {-3,-2,-1,1,2,3}
\draw (0.08,\y)--(-0.08,\y) node[left] {\small $\y$};
\draw[-{Stealth}] (0,-3.5)--(0,3.5) node[above left] {$y$};
\draw[-{Stealth}] (-3.5,0)--(3.5,0) node[below right] {$x$};

\draw[-{Stealth}, blue, very thick] (0,0)--(3.5,0);
\draw[red, very thick] (-3.5,0)--(0,0);
\node[red] at (-2,0.5) {$\text{\rm \textbf{B}}(\Omega_2,x_2^*)$};
\node[blue] at (2,0.5) {$\text{\rm \textbf{B}}(\Omega_1,x_1^*)$};
\fill(0,0) circle (2pt);
\node[above right] at (0,0) {$\bar x$};
\end{scope}
\end{tikzpicture}
\caption{Example~\ref{E5.5}\eqref{E5.5-1} and \eqref{E5.5-2}}
\end{figure}
\end{example}
\begin{example}
Let $X$ be the Hilbert space in Example~\ref{E4.3},
vectors $x_1, x_2, \bx$ and sets $\Omega_1, \Omega_2$ be given by \eqref{E5.4-1} and \eqref{E5.4-2}.
The objective function is of the form
\eqref{E5.5-0} for all $x\in X$.
By Example~\ref{E3.7}, $\bar x\in\text{\rm Sol}(P)$.
By Theorem~\ref{T5.1}, there exist $\lambda_1, \lambda_2\ge 0$ and $x_1^*\in \partial d(\cdot,\Omega_1)(\bx), x_2^*\in \partial d(\cdot,\Omega_2)(\bx)$  such that
\begin{gather}
\lambda_1x^*_1+\lambda_2x^*_2=0,\; 
\lambda_1+\lambda_2=1,\label{E5.6-1}\\
\left(d(\bx,\Omega_i)-\max\{d(\bx,\Omega_1),d(\bx,\Omega_2)\}\right)\cdot\lambda_i=0\;\;(i=1,2).
\label{E5.6-2}
\end{gather}	
By \eqref{sd}, $\|x_1^*\|^*=\|x_2^*\|^*=1$.
Combining this with \eqref{E5.6-1}, we have $\lambda_1, \lambda_2>0$. 
By \eqref{E3.7-6}, condition \eqref{E5.6-2} is satisfied. 
By \eqref{E3.7-6} and \eqref{T5.1-3}, 
\begin{align*}
\text{\rm \textbf{C}}(\Omega_1)
&=
\left\{x\in X\mid d(x,\Omega_1)\ge d(x,\Omega_2)
\right\}\\
&=
\left\{x\in X\mid\max\{\|x-x_1\|-1/2,0\}\ge \max\{\|x-x_2\|-1/2,0\}\right\}\\
&=\left\{x\in X\mid\|x-x_1\|\ge\|x-x_2\|\right\}\cup
\{x\in X\mid\|x-x_2\|\le 1/2\}\\
&=\left\{x\in X\mid\|x-x_1\|\ge\|x-x_2\|\right\}\\
&=\left\{x\in X\mid\langle x,x_1\rangle\le 0\right\}.
\end{align*}	
Similarly, $\text{\rm \textbf{C}}(\Omega_2)
=\left\{x\in X\mid\ang{x,x_1}\ge 0
\right\}$.
The sets $\text{\rm \textbf{A}}(\Omega_1,x_1^*)$ and  $\text{\rm \textbf{A}}(\Omega_2,x_2^*)$ are given by \eqref{E5.4-7} and \eqref{E5.4-8}. 
Then
\begin{align*}
\text{\rm \textbf{B}}(\Omega_1,x_1^*)
&=\text{\rm \textbf{A}}(\Omega_1,x_1^*)\cap \text{\rm\textbf{C}}(\Omega_1)
=\left\{\gamma x_1\mid\gamma\le 0\right\},\\
\text{\rm \textbf{B}}(\Omega_2,x_2^*)
&=\text{\rm \textbf{A}}(\Omega_2,x_2^*)\cap \text{\rm\textbf{C}}(\Omega_2)
=\left\{\gamma x_1\mid\gamma\ge 0\right\}.
\end{align*}	
By Theorem~\ref{T5.1}\eqref{T5.1-2}, 
$\text{\rm Sol}(P)=\text{\rm \textbf{B}}(\Omega_1,x_1^*)\cap\text{\rm \textbf{B}}(\Omega_2,x_2^*)=\{\bar x\}.$
\end{example}	

\section{Generalised $p$-Fermat-Torricelli Problem}\label{S6}
We study a particular case of problem \eqref{P} with $\phi:=\|\cdot\|_{\psi_p}$ for $p\in(1,\infty)$.
The objective function is of the form:
\begin{gather}\label{6.1}
f(x)=\left(d(x,\Omega_1)^p+\cdots+d(x,\Omega_n)^p\right)^{\frac{1}{p}}\;\;\text{for all}\;\;x\in X.
\end{gather}	

The following statement provides optimality conditions and a formula for the solution set of the  $p$-GFTP.
\begin{theorem}\label{T3.16}
Let  $\bar{x}\notin\bigcup_{i=1}^n\Omega_i$.
The following assertions hold.
\begin{enumerate}
\item\label{T3.16.1}
$\bar x\in \text{\rm Sol}(P,\|\cdot\|_{\psi_p})$ if and only if there exist $x_1^*,\ldots,x_n^*\in X^*$ such that
\begin{gather}\label{T3.16-2}
x_i^*\in \partial d(\cdot,\Omega_i)(\bx)\;\;(i=1,\ldots,n)\;\;\text{and}\;\; \sum_{i=1}^{n}d(\bx,\Omega_i)^{p-1}x^*_i=0.
\end{gather}
\item\label{T3.16.2}
Let  $x_1^*,\ldots,x_n^*\in X^*$ satisfy \eqref{T3.16-2}.
Then $\text{\rm Sol}(P,\|\cdot\|_{\psi_p})=\bigcap_{i=1}^{n}\text{\rm\textbf{D}}(\Omega_i,x_i^*),$
where $\text{\rm \textbf{D}}(\Omega_i,x_i^*):=\text{\rm \textbf{A}}(\Omega_i,x_i^*)\cap \text{\rm\textbf{E}}(\Omega_i,\bar x)$
with $\text{\rm \textbf{A}}(\Omega_i,x_i^*)$
given by \eqref{A+} and
\begin{gather}\label{T6.1-1}
\text{\rm\textbf{E}}(\Omega_i,\bar x):=\left\{x\in X\mid \frac{d(x,\Omega_i)^p}{\sum_{j=1}^{n}d(x,\Omega_j)^p}=\frac{d(\bx,\Omega_i)^p}{\sum_{j=1}^{n}d(\bx,\Omega_j)^p}\right\}.
\end{gather}
\end{enumerate}
\end{theorem}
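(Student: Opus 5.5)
The plan is to specialise Theorem~\ref{T3.5} to $\phi:=\|\cdot\|_{\psi_p}$, in the same way Theorems~\ref{T2.7} and~\ref{T5.1} were derived. By Remark~\ref{R3.1}, conditions \ref{A1} and \ref{A2} hold. By Remark~\ref{R2.5}, the dual norm is $\|\cdot\|_{\psi_q}$ with $\frac1p+\frac1q=1$. Write $d_i:=d(\bx,\Omega_i)>0$ and $D:=\|(d_1,\ldots,d_n)\|_{\psi_p}$.

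The key step is to identify the multipliers. We need all $\lambda\in\R^n_+$ with $\|\lambda\|_{\psi_q}=1$ and $\sum_i\lambda_i d_i=D$. This is the equality case in H\"older's inequality $\sum\lambda_id_i\le\|\lambda\|_q\|d\|_p$ with $1<p<\infty$. It forces $\lambda_i^q$ to be proportional to $d_i^p$. Normalising gives the unique vector
\[
\lambda_i=d_i^{p-1}/D^{p-1}\quad (i=1,\ldots,n).
\]
One checks directly that $\|\lambda\|_q=1$ because $(p-1)q=p$, and that $\sum\lambda_id_i=D$. Hence \eqref{P2.5-0} holds if and only if $\lambda$ is this vector and $\sum\lambda_ix_i^*=0$. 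Multiplying the latter by $D^{p-1}>0$ gives exactly \eqref{T3.16-2}, which proves \eqref{T3.16.1}. This strict H\"older equality characterisation is the only point that needs care. If one prefers to avoid quoting it, uniqueness follows from the strict convexity of $t\mapsto t^q$, or equivalently from the differentiability of $\|\cdot\|_p$ at $d\ne0$, which makes $\partial\phi(h(\bx))$ a singleton.

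For \eqref{T3.16.2}, all $\lambda_i>0$, so Theorem~\ref{T3.5}\eqref{T3.5-2} gives
\[
\text{\rm Sol}=\Bigl\{x\mid x\in\textstyle\bigcap_i\text{\rm\textbf{A}}(\Omega_i,x_i^*),\ \sum_i\lambda_id(x,\Omega_i)=\|(d(x,\Omega_j))_j\|_{\psi_p}\Bigr\}.
\]
The remaining task is to show that the equality condition in this set is equivalent to $x\in\bigcap_i\text{\rm\textbf{E}}(\Omega_i,\bx)$.

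Suppose the equality holds. Since $\|\lambda\|_q=1$, we are again in the H\"older equality case for the vectors $\lambda$ and $(d(x,\Omega_i))_i$, which gives two subcases.
\begin{itemize}
\item If $d(x,\Omega_i)^p=c\lambda_i^q$ for some $c\ge0$ with $c>0$, then $d(x,\Omega_i)^p=c\,d_i^p/D^p$. The normalised ratios defining \eqref{T6.1-1} then coincide, so $x\in\bigcap_i\text{\rm\textbf{E}}(\Omega_i,\bx)$.
\item If $c=0$, then $d(x,\Omega_i)=0$ for all $i$ and $f(x)=0$. For $x\in\text{\rm Sol}$ this would force $D=0$, contradicting $\bx\notin\bigcup_i\Omega_i$. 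More simply, on Sol the distance vector has norm $D>0$, so this subcase cannot occur.
\end{itemize}

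Conversely, suppose $x\in\bigcap_i\text{\rm\textbf{E}}(\Omega_i,\bx)$. The denominators in \eqref{T6.1-1} must be nonzero for the ratios to be defined, so $d(x,\Omega_i)=s\,d_i$ for some $s>0$. Then
\[
\sum_i\lambda_id(x,\Omega_i)=sD=\|(d(x,\Omega_i))_i\|_{\psi_p},
\]
so the equality holds. Combining both directions yields $\text{\rm Sol}=\bigcap_i\bigl(\text{\rm\textbf{A}}(\Omega_i,x_i^*)\cap\text{\rm\textbf{E}}(\Omega_i,\bx)\bigr)$, which is the claimed formula.
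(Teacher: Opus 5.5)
Your proposal is correct and follows the paper's proof. Both specialise Theorem~\ref{T3.5} to $\phi=\|\cdot\|_{\psi_p}$ and use the equality case of H\"older's inequality with $\|\lambda\|_{\psi_q}=1$ twice: first to pin down $\lambda_i=d(\bar x,\Omega_i)^{p-1}/D^{p-1}$, and then to turn the equality condition in \eqref{P2.5-1} into membership in $\bigcap_i\text{\rm\textbf{E}}(\Omega_i,\bar x)$. Your explicit handling of the degenerate case where all $d(x,\Omega_i)=0$ (impossible on the solution set, since $f(x)=D>0$ there) is a small detail that the paper's one-line equivalence leaves implicit.
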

	
\begin{proof}
By Theorem~\ref{T3.5}\eqref{T3.5-1},	$\bar x\in \text{\rm Sol}(P,\|\cdot\|_{\psi_p})$ if and only if there exist $(\lambda_1,\ldots,\lambda_n)\in\R^n_+$ and $x_i^*\in  \partial d(\cdot,\Omega_i)(\bx)$ $(i=1,\ldots,n)$ such that
\begin{gather}\label{T6.1-10}
\sum_{i=1}^{n}\lambda_ix^*_i=0,\;\;
\sum_{i=1}^{n}\lambda_i^q=1,\;\;
\sum_{i=1}^{n}\lambda_id(\bar{x},\Omega_i)=
\left(\sum_{i=1}^{n}d(\bx,\Omega_i)^p\right)^{1/p}.
\end{gather}
Since $\sum_{i=1}^{n}\lambda_i^q=1$, the last equality in \eqref{T6.1-10} is equivalent to
\begin{gather}\label{T6.1-3}
\lambda_i=\frac{d(\bx,\Omega_i)^{\frac{p}{q}}}{\left(\sum_{j=1}^{n}d(\bx,\Omega_j)^p\right)^\frac{1}{q}}>0
\quad (i=1,\ldots,n)
\end{gather}
with $q\in(1,\infty)$ satisfying $\frac{1}{p}+\frac{1}{q}=1$.
This proves \eqref{T3.16.1}.
By Theorem~\ref{T3.5}\eqref{T3.5-2}, a point $x\in\text{\rm Sol}(P,\|\cdot\|_{\psi_p})$ if and only if
\begin{gather}\label{T5.1-7}
x_i^*\in \partial d(\cdot,\Omega_i)(x)\;\;(i=1,\ldots,n)\;\;\text{and}\;\;
\sum_{i=1}^{n}\lambda_id(x,\Omega_i)=\left(\sum_{i=1}^{n}d(x,\Omega_i)^p\right)^{1/p}.
\end{gather}	
Since $\sum_{i=1}^{n}\lambda^q_i=1$, the equality in \eqref{T5.1-7} is equivalent to condition \eqref{T6.1-3} with $x$ in place of $\bx$. 
This proves \eqref{T3.16.2}.
\end{proof}
\begin{remark}
When $\Omega_1,\ldots,\Omega_n$  are singletons, Theorem~\ref{T5.1} reduces to \cite[Theorem~7.1]{Cuo26b}. 	
\end{remark}	

\if{
\begin{remark}
To the best of our knowledge, the dual necessary and sufficient optimality conditions and the construction of the solution set in Theorem~\ref{T3.16} have not been previously studied in the literature.
\end{remark}
}\fi
The following examples illustrate Theorem~\ref{T3.16}.
\begin{example}\label{E7.2}
Let $X:=\R^2$ be equipped with the norm $\|\cdot\|:=\|\cdot\|_{\psi_{p'}}$ with $p'\in[1,\infty]$.  
The dual norm is $\|\cdot\|^*=\|\cdot\|_{\psi_{q'}}$ with $q'\in[1,\infty]$ satisfying $\frac{1}{p'}+\frac{1}{q'}=1$.
Let vectors $x_1, x_2, \bx$ and sets $\Omega_1, \Omega_2$ be given by \eqref{E4.4-1} and \eqref{E4.4.2}.
The objective function \eqref{6.1} is of the form
\begin{gather}\label{E7.2-0}
f(x)=\left(d(x,\Omega_1)^p+d(x,\Omega_2)^p\right)^{\frac{1}{p}}
\end{gather}
for all $x\in\R^2$.
By Example~\ref{E3.7},  $\bar x\in\text{\rm Sol}(P)$.
Define $x_1^*:=(1,0)$ and $x_2^*:=(-1,0)$.
By Example~\ref{E4.4}, $x_1^*\in\partial d(
\cdot,\Omega_1)(\bx)$ and $x_2^*\in\partial d(
\cdot,\Omega_2)(\bx)$. 
By \eqref{E3.7-6}, $d(\bx,\Omega_1)=d(\bx,\Omega_2)$. 
Thus, condition \eqref{T3.16-2} is satisfied.
\begin{enumerate}
\item\label{E7.2-1}
Suppose that $\|\cdot\|:=\|\cdot\|_{\psi_\infty}$.
By \eqref{E3.7-6} and \eqref{T6.1-1},
\begin{align*}
\text{\rm \textbf{E}}(\Omega_1,\bar x)
&=\left\{u\in\R^2\mid
\frac{d(u,\Omega_1)^p}{d(u,\Omega_1)^p+d(u,\Omega_2)^p}=\frac{1}{2}\right\}\\
&=\left\{u\in\R^2\mid
d(u,\Omega_1)=d(u,\Omega_2)\right\}\\
&=\left\{u\in\R^2\mid\max\{\|u-x_1\|-1,0\}=
\max\{\|u-x_2\|-1,0\}\right\}\\
&=\left\{(u_1,u_2)\mid |u_2|\ge |u_1+2|,\;|u_2|\ge |u_1-2| \right\}
\cup\left\{(0,u_2)\mid |u_2|\le 2\right\},\\
\text{\rm \textbf{E}}(\Omega_2,\bar x)
&=\left\{u\in\R^2\mid \frac{d(u,\Omega_2)^p}{d(u,\Omega_1)^p+d(u,\Omega_2)^p}=\frac{1}{2}\right\}=\text{\rm \textbf{E}}(\Omega_1,\bar x).
\end{align*}	
The sets $\text{\rm \textbf{A}}(\Omega_1,x_1^*)$ and $\text{\rm \textbf{A}}(\Omega_2,x_2^*)$
are given by \eqref{E4.4.4} and \eqref{E4.4.6}. 
Then
\begin{align*}
\text{\rm \textbf{D}}(\Omega_1,x_1^*)
&=\left\{(u_1,u_2)\mid u_1\ge 0,\; |u_2|=2+u_1\right\}\cup\left\{(0,u_2)\mid |u_2|\le 2\right\},\\
\text{\rm \textbf{D}}(\Omega_2,x_2^*)
&=\left\{(u_1,u_2)\mid u_1\le 0,\; |u_2|=2-u_1\right\}\cup\left\{(0,u_2)\mid |u_2|\le 2\right\}.
\end{align*}	
By Theorem~\ref{T3.16}\eqref{T3.16.1}, 
$\text{\rm Sol}(P)=\text{\rm \textbf{D}}(\Omega_1,x_1^*)\cap \text{\rm \textbf{D}}(\Omega_2,x_2^*)=\left\{(0,u)\mid |u|\le 2\right\}$.

\begin{figure}[H]
\begin{tikzpicture}[x=1cm, y=0.5cm, scale=0.9]
\begin{scope}
\draw[step=2cm,gray!50,dashed] (-3.5,-5.5) grid (3.5,5.5);

\foreach \x in {-2,0,2}{
\draw (\x,0.08)--(\x,-0.08) node[anchor=north,xshift=4pt] at (\x,0) {\small $\x$};}
\foreach \y in {-2,2}
\draw (0.08,\y)--(-0.08,\y) node[left] {\small $\y$};
\draw[-{Stealth}] (-3.5,0)--(3.5,0) node[below right] {$x$};
\draw[-{Stealth}] (0,-5.5)--(0,5.5) node[above left] {$y$};

\clip (-3.5,-5.5) rectangle (3.5,5.5);
						
\fill[blue, fill opacity=0.5]
(-4,6)--(0,2)--(4,6);
\draw[blue,  thick] (-4,6)--(0,2)--(4,6);
\fill[blue, fill opacity=0.5]
(-4,-6)--(0,-2)--(4,-6);
\draw[blue,  thick] (-4,-6)--(0,-2)--(4,-6);
\draw[blue,  thick] (0,2)--(0,-2);
\node[blue] at (0.2,4.5) {$\text{\rm\textbf{E}}(\Omega_1,\bar x)=\text{\rm\textbf{E}}(\Omega_2,\bar x)$};
\node[blue] at (0.2,-4.5) {$\text{\rm\textbf{E}}(\Omega_1,\bar x)=\text{\rm\textbf{E}}(\Omega_2,\bar x)$};
\end{scope}
		
\begin{scope}[shift={(8,0)}]
\draw[step=2cm,gray!50,dashed] (-3.5,-5.5) grid (3.5,5.5);

\foreach \x in {-2,0,2}{
\draw (\x,0.08)--(\x,-0.08) node[anchor=north,xshift=4pt] at (\x,0) {\small $\x$};}
\foreach \y in {-2,2}
\draw (0.08,\y)--(-0.08,\y) node[left] {\small $\y$};
\draw[-{Stealth}] (-3.5,0)--(3.5,0) node[below right] {$x$};
\draw[-{Stealth}] (0,-5.5)--(0,5.5) node[above left] {$y$};

\clip (-3.5,-5.5) rectangle (3.5,5.5);
								
\draw[red,  thick] (-4,6)--(0,2)--(0,-2)--(-4,-6);
\node[red] at (-2,2.5) {$\text{\rm \textbf{D}}(\Omega_2,x_2^*)$};

\draw[blue, thick] (4,6)--(0,2)--(0,-2)--(4,-6);
\node[blue] at (2,2.5) {$\text{\rm \textbf{D}}(\Omega_1,x_1^*)$};
\end{scope}
\end{tikzpicture}
\caption{Example~\ref{E7.2}\eqref{E7.2-1}}
\label{fig5}
\end{figure}
\item\label{E7.2-2}
Suppose that $\|\cdot\|:=\|\cdot\|_{\psi_{p'}}$ with $p'\in[1,\infty)$.
By \eqref{E3.7-6} and \eqref{T6.1-1},
\begin{align*}
\text{\rm \textbf{E}}(\Omega_1,\bar x)
&=\left\{u\in\R^2\mid
\frac{d(u,\Omega_1)^p}{d(u,\Omega_1)^p+d(u,\Omega_2)^p}=\frac{1}{2}\right\}\\
&=\left\{u\in\R^2\mid
d(u,\Omega_1)=d(u,\Omega_2)\right\}
=\left\{(0,u_2)\mid u_2\in\R\right\},
\\
\text{\rm \textbf{E}}(\Omega_2,\bar x)
&=\left\{u\in\R^2\mid \frac{d(u,\Omega_2)^p}{d(u,\Omega_1)^p+d(u,\Omega_2)^p}=\frac{1}{2}\right\}=\text{\rm \textbf{E}}(\Omega_1,\bar x).
\end{align*}	
The sets $\text{\rm \textbf{A}}(\Omega_1,x_1^*)$ and $\text{\rm \textbf{A}}(\Omega_2,x_2^*)$
are given by \eqref{E4.4.8} and \eqref{E4.4.10}. 
Then
\begin{align*}
\text{\rm \textbf{D}}(\Omega_1,x_1^*)
=\text{\rm \textbf{A}}(\Omega_1,x_1^*)\cap \text{\rm\textbf{E}}(\Omega_1,\bar x)=\{\bx\},\;\;
\text{\rm \textbf{D}}(\Omega_2,x_2^*)
=\text{\rm \textbf{A}}(\Omega_2,x_2^*)\cap \text{\rm\textbf{E}}(\Omega_2,\bar x)=\{\bx\}.
\end{align*}	
By Theorem~\ref{T3.16}\eqref{T3.16.2},  
$\text{\rm Sol}(P)=\text{\rm \textbf{D}}(\Omega_1,x_1^*)\cap \text{\rm \textbf{D}}(\Omega_2,x_2^*)=\{\bx\}$.
\begin{figure}[H]
\centering
\begin{tikzpicture}[x=1cm, y=0.5cm, scale=0.9]
\begin{scope}
\draw[step=2cm,gray!50,dashed] (-2.5,-5.5) grid (3.5,5.5);
\foreach \x in {-2,-1,0,2}{
\draw (\x,0.08)--(\x,-0.08) node[anchor=north,xshift=4pt] at (\x,0) {\small $\x$};}
\draw[-{Stealth}] (-2.5,0)--(3.5,0) node[below right] {$x$};
\draw[-{Stealth}] (0,-5.5)--(0,5.5) node[above left] {$y$};

\draw[-{Stealth}, blue, very thick] (-1,0)--(3.5,0);
\draw[-{Stealth}, red, very thick] (0,-5.5)--(0,5.5);
					
\node[blue] at (2,1) {$\text{\rm \textbf{A}}(\Omega_1,x_1^*)$};
\node[red] at (1,-3) {$\text{\rm \textbf{E}}(\Omega_1,\bar x)$};
\fill(0,0) circle (2pt);
\fill[blue](-1,0) circle (2pt);
\node[above right] at (0,0) {$\bar x$};
\end{scope}
				
\begin{scope}[shift={(8,0)}]
\draw[step=2cm,gray!50,dashed] (-3.5,-5.5) grid (2.5,5.5);
\foreach \x in {-2,0,1,2}{
\draw (\x,0.08)--(\x,-0.08) node[anchor=north,xshift=4pt] at (\x,0) {\small $\x$};}
\draw[-{Stealth}] (-3.5,0)--(2.5,0) node[below right] {$x$};
\draw[-{Stealth}] (0,-5.5)--(0,5.5) node[above left] {$y$};
					
\draw[blue, very thick] (-3.5,0)--(1,0);
\draw[-{Stealth}, red, very thick] (0,-5.5)--(0,5.5);
					
\node[blue] at (-2,1) {$\text{\rm \textbf{A}}(\Omega_2,x_2^*)$};
\node[red] at (1,-3) {$\text{\rm \textbf{E}}(\Omega_2,\bar x)$};
\fill(0,0) circle (2pt);
\fill[blue](1,0) circle (2pt);
\node[above right] at (0,0) {$\bar x$};
\end{scope}
\end{tikzpicture}
\caption{Example~\ref{E7.2}\eqref{E7.2-2}}
\label{fig6}
\end{figure}
\end{enumerate}
\end{example}

\begin{example}
Let $X$ be the Hilbert space in Example~\ref{E4.3},
vectors $x_1, x_2, \bx$ and sets $\Omega_1, \Omega_2$ be given by \eqref{E5.4-1} and \eqref{E5.4-2}.
The objective function is of the form \eqref{E7.2-0} for all $x\in X$.
By Example~\ref{E3.7},  $\bar x\in\text{\rm Sol}(P)$.
Define $x_1^*:=-\sqrt{3}x_1$ and $x_2^*:=-\sqrt{3}x_2$. 
By Example~\ref{E4.3},  $x_1^*\in\partial d(
\cdot,\Omega_1)(\bx)$ and $x_2^*\in\partial d(
\cdot,\Omega_2)(\bx)$. 
By \eqref{E3.7-6},  $d(\bx,\Omega_1)=d(\bx,\Omega_2)$. 
Thus, condition \eqref{T3.16-2} is satisfied.
By \eqref{E3.7-6} and \eqref{T6.1-1},
\begin{align*}
\text{\rm \textbf{E}}(\Omega_1,\bar x)
&=\left\{x\in X\mid
\frac{d(x,\Omega_1)^p}{d(x,\Omega_1)^p+d(x,\Omega_2)^p}=\frac{1}{2}\right\}\\
&=\left\{x\in X\mid
d(x,\Omega_1)=d(x,\Omega_2)\right\}\\
&=\left\{x\in X\mid\langle x,x_1\rangle=0\right\},\\
\text{\rm \textbf{E}}(\Omega_2,\bar x)
&=\left\{x\in X\mid \frac{d(x,\Omega_2)^p}{d(x,\Omega_1)^p+d(x,\Omega_2)^p}=\frac{1}{2}\right\}
=\text{\rm \textbf{E}}(\Omega_1,\bar x).
\end{align*}	
The sets $\text{\rm \textbf{A}}(\Omega_1,x_1^*)$ and  $\text{\rm \textbf{A}}(\Omega_2,x_2^*)$ are given by \eqref{E5.4-7} and \eqref{E5.4-8}. 
Then
\begin{align*}
\text{\rm \textbf{D}}(\Omega_1,x_1^*)=\text{\rm \textbf{A}}(\Omega_1,x_1^*)\cap \text{\rm\textbf{E}}(\Omega_1,\bar x)=\{\bx\},\;\;
\text{\rm \textbf{D}}(\Omega_2,x_2^*)=\text{\rm \textbf{A}}(\Omega_2,x_2^*)\cap \text{\rm\textbf{E}}(\Omega_2,\bar x)=\{\bx\}.
\end{align*}	
By Theorem~\ref{T3.16}\eqref{T3.16.2}, 
$\text{\rm Sol}(P)=\text{\rm \textbf{D}}(\Omega_1,x_1^*)\cap \text{\rm \textbf{D}}(\Omega_2,x_2^*)=\{\bar x\}$.
\end{example}	

\section{Conclusions}\label{S7}
A unified framework for norm minimisation problems involving distances to convex sets has been developed. Complete dual necessary and sufficient optimality conditions and solution set descriptions have been studied. 
The latter are constructed from the dual vectors arising from the optimality conditions at a given solution. 
As a consequence, these results are applied to three important problems: the GFTP, GCCP and $p$-GFTP. 
Several examples in finite and infinite dimensional spaces are presented to illustrate the results.
In particular, it is  demonstrated that the solution set depends strongly on the choice of norm on the underlying space.

Potential directions for future research include extending the current framework to nonconvex settings, developing numerical algorithms based on the derived dual optimality conditions, and studying the stability and sensitivity of solution sets under perturbations of the problem data.

\section*{Acknowledgments}
The authors wish to thank Professor Alexander Kruger for  his comments  which helped us improve the manuscript.
\section*{Disclosure statement}
The authors report there are no competing interests to declare.

\section*{Data availability statement}
Data sharing is not applicable to this article as no new data were created or analysed in this study.




\end{document}

%% file: macro.tex
\newcommand{\al}{\alpha}
\newcommand{\be}{\beta}

\newcommand{\bx}{\bar x}

\newcommand {\R} {\mathbb R}

\newcommand {\B} {\mathbb B}

\newcommand {\dom} {{\rm dom}\,}

\newcommand {\cl} {{\rm cl}\,}
\newcommand {\co} {{\rm co}\,}

\newcommand {\sd} {\partial}

\newcommand{\ds}{\displaystyle}
\newcommand{\vertiii}[1]{\left\vert\kern-0.25ex\left\vert\kern-0.25ex\left\vert #1\right\vert\kern-0.25ex\right\vert\kern-0.25ex\right\vert}
\newcommand{\vertiiiBig}[1]{\Big\vert\kern-0.25ex\Big\vert\kern-0.25ex\Big\vert #1\Big\vert\kern-0.25ex\Big\vert\kern-0.25ex\Big\vert}

\newcommand{\abs}[1]{\left\vert#1\right\vert}

\newcommand{\ang}[1]{\left\langle #1 \right\rangle}

\newcounter{mycount}

%% file: Norm_Minimization_Problem-6+.bbl
\begin{thebibliography}{99}
\bibitem{NamHoa13} Nam NM, Hoang N. A generalized Sylvester problem and a generalized Fermat--Torricelli problem. J Convex Anal. 2013;20(3):669–687.

\bibitem{MorNam11}
Mordukhovich BS, Nam NM.
Applications of variational analysis to a generalized Fermat--Torricelli problem.
J Optim Theory Appl. 2011;148(3):431--454. 
doi: 10.1007/s10957-010-9761-7

\bibitem{AmiMac84} Amir D, Mach J. Chebyshev centers in normed spaces. J  Approx Theory. 1984;40(4):364--374.
doi: 10.1016/0021-9045(84)90011-X
	
\bibitem{NamHoaAn14}
Nam NM, Hoang N, An NT.
Constructions of solutions to generalized Sylvester and Fermat--Torricelli problems for Euclidean balls.
J Optim Theory Appl. 2014;160:483--509.
doi: 10.1007/s10957-013-0366-9
	
\bibitem{MarSwaWei02} Martini H, Swanepoel KJ, Weiss G. The Fermat–Torricelli problem in normed planes and spaces. J  Optim Theory Appl. 2002;115(2):283--314.
doi: 10.1023/A:1020884004689
	
\bibitem{KazLiuMor25} Kazemdehbashi S, Liu Y, Mordukhovich BS. New location science models with applications to UAV-based disaster relief. Preprint,  2025, arXiv:2510.15229.
	
\bibitem{BolMarSol99} Boltyanski V, Martini H, Soltan V. Geometric methods and optimization problems.
Springer Science \& Business Media; 1998.
	
\bibitem{BotTur94}
Botkin ND, Turova-Botkina VL.
An algorithm for finding the Chebyshev center of a convex polyhedron.
Appl Math Optim. 1994;29:211--222.
doi: 10.1007/BF01204183

\bibitem{NamNguSal12}
Nam NM, Nguyen TA, Salinas J.
Applications of convex analysis to the smallest intersecting ball problem.
J Convex Anal. 2012;19(2):497--518.

\bibitem{NamAnRecSun14}
Nam NM, An NT, Rector RB, Sun J. Nonsmooth algorithms and Nesterov's smoothing technique for generalized Fermat--Torricelli problems. SIAM J Optim. 2014;24(4):1815--1839.
doi: 10.1137/130945442

\bibitem{MorNamVil13}
Mordukhovich BS. Nam NM, Villalobos C. The smallest enclosing ball problem and the smallest intersecting ball problem: existence and uniqueness of solutions. Optim Lett. 2013;7(5):839--853.
doi: 10.1007/s11590-012-0483-7

\bibitem{AliTsa19} Alimov AR, Tsar’kov IG. Chebyshev centres, Jung constants, and their  applications. Russ Math Surv. 2019;74(5):775--849.
doi: 10.1070/RM9839

\bibitem{Man88}
Mangasarian OL. A simple characterization of solution sets of convex programs. Operations Research Letters. 1988;7(1):21--26.

\bibitem{Bur91}
Burke JV, Ferris MC. Characterization of solution sets of convex programs. Oper Res Lett. 1991;10(1):57--60.
doi: 10.1016/0167-6377(91)90087-6

\bibitem{Cuo26b}
Cuong ND,
Dual characterizations of norm minimization problems,
to appear in Optimization;
arXiv:2601.08153, 2026.

\bibitem{SaiKatTak00} Saito KS, Kato M, Takahashi Y. Absolute norms on $\mathbb{C}^n$. J Math Anal Appl. 2000;252(2):879--905. 
doi: 10.1006/jmaa.2000.7139

\bibitem{BonDun73}
Bonsall FF, Duncan J.
\emph{Numerical Ranges II}.
London Math Soc Lecture Note Ser, vol. 10.
Cambridge University Press, Cambridge; 1973.


\bibitem{Cuo26a}
Cuong ND,
Primal and dual characterizations of sign-symmetric norms,
Positivity 30 (2026), Article 35.
doi: 10.1007/s11117-026-01193-9

\bibitem{Mor06.1} Mordukhovich BS. {Variational analysis and generalized differentiation. I: Basic theory}. Berlin:
Springer; 2006. (Grundlehren der Mathematischen Wissenschaften [Fundamental principles
of mathematical sciences]; Vol. 330).

\bibitem{RocWet98} Rockafellar RT, Wets RJB. {Variational analysis}. Springer Berlin; 1998.

\bibitem{Bre11} Brezis H. {Functional analysis, Sobolev spaces and partial differential equations}. Springer New York; 2011.
	
\bibitem{Pen13} Penot JP. {Calculus without derivatives}. Graduate Texts in Mathematics, Vol. 266. Springer New York; 2013.
	
\bibitem{MorNam22} Mordukhovich BS, Nam NM. {Convex analysis and beyond. Volume I: Basic theory}. Springer Cham; 2022.
	
\bibitem{Cla90}
Clarke FH.
Optimization and Nonsmooth Analysis.
SIAM, Philadelphia; 1990.


\bibitem{GobMarTod19} Goberna MA, Martínez-Legaz JE, Todorov MI. On farthest Voronoi cells. Linear Algebra Appl. 2019;583:306--322.
doi: 10.1016/j.laa.2019.09.002

\bibitem{Che11}
Cheong O, Everett H, Glisse M, Gudmundsson J, Hornus S, Lazard S, Lee M, Na HS. Farthest-polygon Voronoi diagrams. Comput Geom. 2011;44(4):234--247.
doi: 10.1016/j.comgeo.2010.11.004

\bibitem{Rie03}
Rieger JH. Voronoi diagrams of real algebraic sets. Geom Dedicata. 2003;98(1):81--94.
doi: 10.1023/A:1023633014914


%
%
\end{thebibliography}
